\newcommand{\citet}[2][]{\citeauthor{#2} \cite[#1]{#2}}
\newcommand{\DR}{\text{\rm D}}
\newcommand{\DT}{\text{\rm D}^\bot}
\newcommand{\DivR}{\text{\rm Div}}
\newcommand{\DivT}{\text{\rm Div}^\bot}
\newcommand{\os}{\omega}
\newcommand{\Ob}{\mathcal{U}}
\newcommand{\Pj}{\mathbb{P}}
\newcommand{\pj}{\pi_{12}}
\newcommand{\G}{{\bm{\Gamma}}}
\newcommand{\dott}{\circ}
\newcommand{\SKEW}{-\matr{\skew (R\,\partial_x R^T)\\\skew (R\,\partial_yR^T)}}
\newcommand{\zwei}{2}
\newcommand{\<}{\langle}
\renewcommand{\>}{\rangle}
\begin{document}
\title{Regularity for a geometrically nonlinear flat Cosserat micropolar membrane shell with curvature}
\date{\today}
\author{%
	Andreas Gastel\thanks{%
		Corresponding author: Andreas Gastel,\quad Head of Lehrstuhl f\"{u}r Geometrische Analysis, Fakult\"{a}t f\"{u}r Mathematik, Universit\"{a}t Duisburg-Essen, Thea-Leymann Str. 9, 45127 Essen, Germany; email: andreas.gastel@uni-due.de%
	}\quad and\quad%
	Patrizio Neff\thanks{%
		Patrizio Neff,\quad Head of Lehrstuhl f\"{u}r Nichtlineare Analysis und Modellierung, Fakult\"{a}t f\"{u}r	Mathematik, Universit\"{a}t Duisburg-Essen, Thea-Leymann Str. 9, 45127 Essen, Germany, email: patrizio.neff@uni-due.de%
		}
}
\maketitle
\begin{abstract}
\noindent
	We consider the rigorously derived thin shell membrane $\Gamma$-limit of a three-dimensional isotropic geometrically nonlinear Cosserat micropolar model and deduce full interior regularity of both the midsurface deformation $m\col \omega\subset \R^2\to \R^3$ and the orthogonal microrotation tensor field $R\col \omega\subset \R^2\to \SO(3)$. The only further structural assumption is that the curvature energy depends solely on the uni-constant isotropic Dirichlet type energy term $|\DR R|^2$. We use Rivi\`ere's regularity techniques of harmonic map type systems for our system which couples harmonic maps to $\SO(3)$ with a linear equation for $m$. The additional coupling term in the harmonic map equation is of critical integrability and can only be handled because of its special structure.  
\end{abstract}
{\textbf{Key words:} flat shell, membrane, $\Gamma$-limit, Hölder regularity, Cosserat surface, Cosserat shell, micropolar shell, harmonic maps, $\SO(3)$, graphen, generalized continua. }
\\[.65em]
\noindent {\bf AMS 2010 subject classification: 58E20, 74G40, 74B20, 35J35}\\
%
%

%
%
%

{\parskip=-0.5mm \tableofcontents}

	\section{Introduction}
%
%
		\subsection{Regularity background and setting of the problem}\label{sect:backgr}

                This paper is a contribution to the wide field of regularity theory of harmonic map type equations. Driven by the application to geometrically nonlinear flat Cosserat shell models, we extend known regularity results to a system that couples a harmonic map equation with another uniformly elliptic equation. The system we consider is of the form  
\begin{align}\label{Gl1}
  \Div S(\DR m,R)&=0,\\
\label{Gl2}
  \Delta R-\Omega_R\cdot\DR R-\skew\big(\DR m\dott S(\DR m,R)\big)R&=0.
\end{align}
The unknown functions here are the midsurface deformation $m\in W^{1,2}(\os,\R^3)$ and the microrotation $R\in W^{1,2}(\os,\SO(3))$, while $\omega\subset \R^2$ is a smooth domain. Moreover, there are functions $\Omega_R\in L^2(\os,(\R^2)^*\otimes \so(3))$
and the force stress tensor $S(\D m,R)\in L^2(\os,\R^{3\times2})$ involved, and $\dott$ is some bilinear product explained later. The function $\Omega_R$ is the same that makes
\begin{equation}\label{Gl3}
  \Delta R-\Omega_R\cdot \DR R=0
\end{equation}
the harmonic map equation for harmonic mappings to $\SO(3)\subset\R^{3\times3}$. The theory of harmonic map equations of $2$-dimensional domains (to any sufficiently smooth compact target manifold, here $\SO(3)$) has a long history. It has been proven in 1948 by Morrey \cite{morrey1948problem} that minimizing weakly harmonic maps are smooth. In 1981, Gr\"uter \cite{grueter1981regularity} generalized that to conformal weakly harmonic maps, and then in 1984 Schoen \cite{schoen1984analytic} to stationary ones. The regularity proof for general weakly harmonic maps was then found in 1990 by H\'elein \cite{helein1990regularite} \cite{helein1991regularite}. (Note that in our case, the target mainfold $\SO(3)$ is a Lie group, and in this case the harmonic map problem has a lot of interesting extra stucture, many aspects of which are covered in Helein's book \cite{helein2002harmonic}.) Later, in 2007, Rivi\`ere \cite{riviere2007conservation} revisited harmonic map type equations and asked for which $\Omega_R$ all weak solutions of (\ref{Gl3}) on a two-dimensional domain are smooth. It turned out that $\Omega_R$ need not come from the harmonic map equation (in which case it can be seen as the anti-symmetrized tensor derived from the second fundamental form of the target manifold), but for the regularity result only the skew-symmetry of $\Omega_R$ is needed. This gave deeper insight in the structures necessary to have regularity results, and it is Rivi\`ere's philosophy that we rely upon.

It should be pointed out that, with $\Omega_R$ and $\DR R$ being in $L^2$, the nonlinear term $\Omega_R\cdot \DR R$ in (\ref{Gl3}) is only in $L^1$, and if it would not have any further structure, it would be difficult to start with any regularity theory, due to the lack of an $L^p$-theory working for $p=1$. But it turns out that the product $\Omega_R\cdot\DR R$, after a suitable gauge transformation, is the sum of products of divergence-free vector fields and gradients
in $L^2$, which is known to be in the Hardy space ${\mathcal H}^1$ rather than $L^1$. This little bit of extra regularity is enough to perform regularity theory.

Now let us have a look at our equation (\ref{Gl2}). Compared with (\ref{Gl3}), it has an extra term $\skew(\DR m\dott S(\DR m,R))R$, and again, with $\DR R\in L^2$, $S(\DR m,R)\in L^2$, and $R\in L^\infty$, this has only $L^1$-integrability. But once more, $\DR R$ is a gradient, and $S(\DR m,R)$ is divergence free due to equation (\ref{Gl1}). This time, we have the product of a gradient $\D m$, a divergence free vector field $S(\D m,R)$, and a bounded function $R$. Based on a crucial estimate by Coifman, Lion, Meyer and Semmes \cite{coifman1993compensated}, Rivi\`ere and Struwe \cite{riviere2008partial} were able to handle such products in their work on partial regularity in dimensions $\ge3$. They encountered such products in the course of their proof for the equation (\ref{Gl3}) without any extra terms, and we can modify their arguments to handle our extra term from the coupling. The handling of the first equation, which is linear in $m$ with some right-hand side, is easier, in principle. But we have to do the iteration procedure for both equations simultaneously in the proof of H\"older continuity, resulting in some technicalities. Once we have that, classical Schauder theory helps with the higher regularity of $m$, while for the second equation controlling the smoothness of $R$, we still need some machinery.

	\subsection{Engineering background and application}
	The Cosserat model is one of the best known generalized continuum models \cite{Capriz89}. It assumes that material points can undergo translation, described by the standard deformation map $\varphi\col \mathcal{U}\to \R^3$ and independent micro rotations described by the orthogonal tensor field $R\col \mathcal{U}\to \SO(3)$, where $\mathcal{U}\subset \R^3$ describes the smooth reference configuration of the material. Therefore, the geometrically nonlinear Cosserat model induces immediately the Lie-group structure on the configuration space $\R^3\times \SO(3)$. 
	
	Both fields are coupled in the assumed elastic energy $W=W(\DR \varphi, R, \DR R)$ and the static Cosserat model appears as a two-field minimization problem which is automatically geometrically nonlinear due to the presence of the non-abelian rotation group $\SO(3)$. Material frame-indifference (objectivity) dictates left-invariance of the Lagrangian $W$ under the action of $\SO(3)$ and material symmetry (here isotropy) implies right-invariance under action of $\SO(3)$.
	
	In the early 20th century the Cosserat brothers E. and F. Cosserat introduced this model in its full geometrically nonlinear splendor \cite{cosserat1909theorie} in a bold attempt to unify field theories embracing mechanics, optics and electrodynamics through a common principal of least action. They used the invariance of the energy under Euclidean transformations \cite{cosserat1991note,gauthier1909} to deduce the correct form of the energy $W=W(R^T\DR \varphi, R^T\partial_xR,R^T\partial_yR,R^T\partial_zR)$ and to derive the equations of balance of forces (variations w.r.t the deformation $\varphi$, the force-stress tensor may loose symmetry \cite{neff2008symmetric}) and balance of angular momentum (variations w.r.t. rotations $R$). The Cosserat brothers did not provide, however, any specific constitutive form of the energy since they were not interested in applications.
	
	While the appearance of an additional rotational field $R$ for describing the elastic response of bulk material is requiring getting used to, such an appearance is most natural in the case of shell-theory. There, the Frenet-Darboux tri\`{e}dre \cite{cosserat1908theorie} (tri\`{e}dre cach\'{e} in the terminology of the Cosserats, trihedron) naturally plays a role and it is no big step to assume that this orthogonal field is supposed to be kinematically independent of the former (tri\`{e}dre mobile). Hence the Cosserat approach \cite{cosserat1908theorie}; the independent rotation field $R$ describes the rotations of the cross-sections of the shell (including in-plane drill rotations about the normal $n_m$ to the midsurface $m$) and these cross-sections are all allowed to shear with respect to the normal of the midsurface ($Re_3\neq {n}_m$).

	\begin{figure}[h!]
		\begin{center}
			\includegraphics[scale=1]{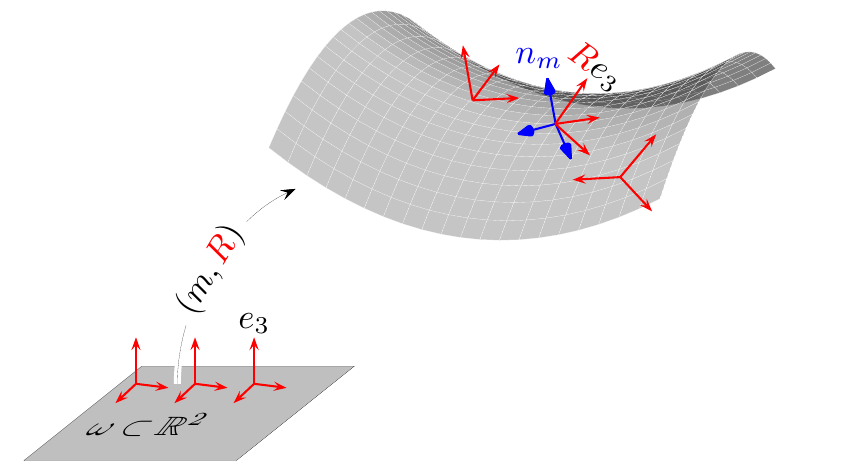}
		
			\caption{\footnotesize The mapping $m\col \os\subset \R^2\to\R^3$ describes the deformation of the flat midsurface $\os\subset \R^2$. The Frenet-Darboux frame (in blue, tri\`{e}dre cach\'{e}) is tangent to the midsurface. The independent frame mapped by $R\in \SO(3)$ is the tri\`{e}dre mobile (in red, not necessary tangent to the midsurface). Both fields $m$ and $R$ are coupled in the variational problem. }
			\label{Fig1}       
		\end{center}
	\end{figure}

	On this basis, very efficient ad-hoc Cosserat shell-models have been introduced, see e.g. \cite{Altenbach-Erem-11,Altenbach-Erem-Review}. A special case of these shell models is the family of Reissner-Mindlin shells in which the in-plane rotations are discarded (no drill energy) \cite{kikis2022two} and one is left with a one director theory \cite{le1995nonlinear} \footnote{One director geometrically nonlinear, physically linear Reissner-Mindlin shells are typically not well-posed, since the membrane stretch energy part depends quadratically on $\DR m^T\DR m-\id_2$, which is not rank-one elliptic in the compression regime, for a detailed exposition, see the Appendix and \cite{Neff2022}. }. Upon identifying/constraining the tri\`{e}dre mobile with the tri\`{e}dre cach\'{e} (microrotation equals continuum rotation, Cosserat couple modulus $\mu_c\to \infty$), canonical shell models of Kirchhoff-Love type emerge \cite{Ghiba2022}. However, engineers would often prefer the Cosserat shell models since these yield nonlinear balance equations of second order \cite{sander2016numerical,wisniewski1998shell,wisniewski2000kinematics,ibrahimbegovic1994stress,pietraszkiewicz2014drilling,bacsar1987consistent}.
	
	The precise derivation of Cosserat shell models may proceed in several different ways: integration of equilibrium equations through the thickness \cite{eremeyev2004nonlinear,pietraszkiewicz2014drilling}, direct modeling as a two-dimensional directed surface \cite{Altenbach-Erem-11,Altenbach-Erem-Review,green1965general}, or the derivation approach, which starts from a three-dimensional variational problem and introduces certain assumptions for the deformation behavior through the thickness. The second author has introduced this derivation procedure based on the geometrically nonlinear Cosserat model in his habilitation thesis \cite{neff2004geometricallyhabil,neff2004geometrically}. Lastly, there is the "ansatz-free" method of $\Gamma$-convergence \cite{braides2006handbook,braides2002gamma,bhattacharya1999theory}  (while letting the thickness $h$ tend to zero) to perform the dimensional descent.
	
	In this method, one needs to choose an energy scaling regime and obtains typically either membrane or bending like theories \cite{friesecke2002theorem,le1995nonlinear,le1996membrane,ASNSP_2010_5_9_2_253_0} when starting from classical finite strain elasticity \cite{friesecke2002theorem,friesecke2003derivation,friesecke2006hierarchy}. However, the $\Gamma$-limit membrane model \cite{le1995nonlinear,le1996membrane} has a serious shortcoming which is connected to the necessary relaxation step: it does not predict any resistance against compression and averages out the expected fine scale wrinkling response. The situation is strikingly different when starting from a three-dimensional Cosserat model, as done in \cite{neff2005gamma}. This is true since the bulk-Cosserat model already features a curvature term (derivatives of $R$) which "survives" the membrane scaling. 
	
	 The Cosserat membrane $\Gamma$-limit with remaining curvature effects can be used as an effective surrogate model to describe ultra thin graphen mono-layers. Graphen is the name given to a single atomic layer of carbon atoms tightly packed into a two-dimensional honeycomb lattice (see Figure \ref{geraphen}). It can be wrapped up to form  fullerenes, rolled into nanotubes \cite{zhang2013single} or stacked into  graphite. It's stiffness properties are extreme. Such a graphen layer has resistance against in-plane stretch and curvature changes but it's thickness is so small, that a classical membrane-bending model (where the bending terms scale with $h^3$ while the membrane terms with $h$) is clearly insufficient. It is simply impossible to speak about the "thickness" of graphen in a classical continuum framework. Researchers then usually resort to introducing an "effective bending rigidity" in order to apply concepts from classical shell theory. 
	 This can be completely avoided in the Cosserat membrane model. 
	 
	 In this paper we will consider, for the first time, the challenging regularity questions for the flat shell Cosserat membrane $\Gamma$-limit.
	 To the best knowledge of the authors, such a regularity investigation for the flat Cosserat membrane shell has never been undertaken. Two recent previous contributions consider the regularity issue for the geometrically isotropic nonlinear Cosserat bulk equations \cite{tel2019regularity,li2022regularity}, both times restricting attention to the uni-constant Dirichlet curvature energy $|\DR R|^2$, leading to a $\Delta R$-term in the Euler-Lagrange equations and allowing the sophisticated techniques for harmonic map type systems to be used.
	\begin{figure}
		\begin{center}
			\includegraphics[scale=0.4]{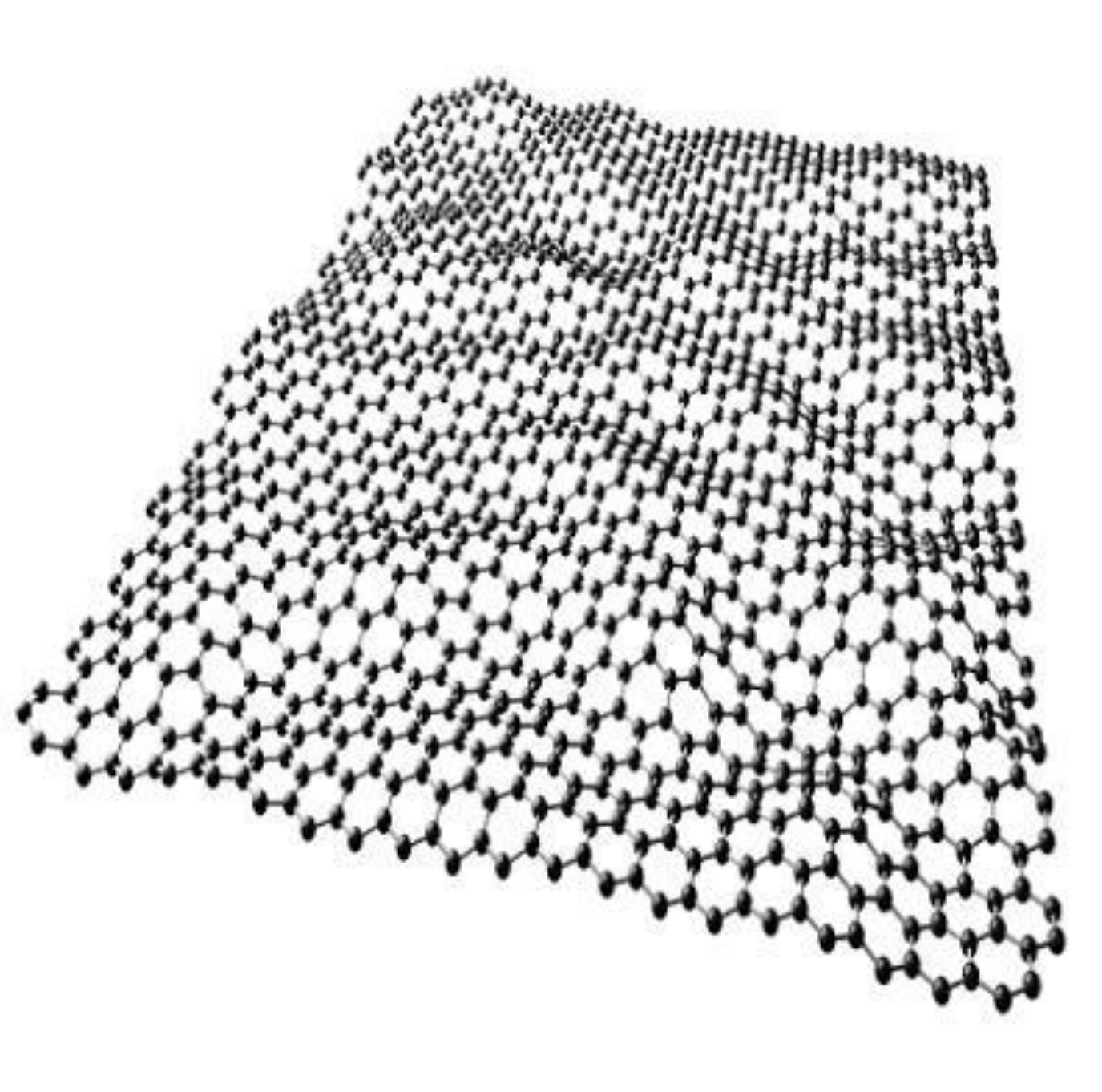}
			\caption{\footnotesize Deformed graphen mono atomic-layer resisting against in-plane stretches (membrane effects) and curvature. Classical continuum models are not any more suitable, since there is no tangible thickness, c.f. \cite{zhang2013single}. Graphen is thought to be the strongest among all known materials. Nevertheless it is soft in the sense that it can be easily bent due to its one atom thin nature.}
			\label{geraphen}
		\end{center}
	\end{figure}
	
	This paper is now structured as follows. After this introduction and the introduction of our notation, in Section 3 we will introduce the three-dimensional isotropic Cosserat model, together with a short discussion of suitable representations for the curvature term. Following, in Section 4, we briefly describe the dimensional descent towards a membrane shell, juxtaposing the result of the $\Gamma$-limit procedure and a formal engineering approach. In Section 5 we introduce the final two-dimensional Cosserat membrane shell model together with some pertinent notations and simplifications. The remainder of the paper is devoted to showing the interior Hölder regularity of these weak solutions. In the appendix we gather further useful calculations like the three-dimensional Euler-Lagrange equations in dislocation tensor format, we present a more engineering oriented derivation of the two-dimensional Euler-Lagrange equations and give a glimpse on a related Reissner-Mindlin model.
	Finally, we show some numerical experiments of the flat Cosserat membrane shell model in compression.

	\section{Notation}

	Let $a,b\in \mathbb{R}^3$. We denote the scalar product on $\mathbb{R}^3$ with $\iprod{a,b}_{\mathbb{R}^3}$ and the associated vector norm with $|a|^2_{\mathbb{R}^3}=\iprod{a,a}_{\mathbb{R}^3}$. The set of real-valued $3\times3$ second order tensors is denoted by $\mathbb{R}^{3\times3}$. 
	The standard Euclidean scalar product on $\mathbb{R}^{3\times 3}$ is given by $\iprod{X,Y}_{\mathbb{R}^{3\times3}}=\tr (XY^\text{T})$, and the associated norm is $|X|^2=\iprod{X,X}_{\mathbb{R}^{3\times3}}$. If $\id_3$ denotes the identity matrix in $\mathbb{R}^{3\times3}$, we have $\tr(X)=\iprod{X,\id_3}$. For an arbitrary matrix $X\in \mathbb{R}^{3\times3}$ we define $\text{sym}(X)=\frac{1}{2}(X+X^{\text{T}})$ and $\text{skew}(X)=\frac{1}{2}(X-X^{\text{T}})$ as the symmetric and skew-symmetric parts, respectively and the trace free deviatoric part is defined as $\dev X= X-\frac{1}{n}\tr(X)\id_n$, for all $X\in \R^{n\times n}$. We let ${\rm Sym}(n)$ and ${\rm Sym}\hspace{-0.05cm}^+\hspace{-0.05cm}(n)$ denote the symmetric and positive definite symmetric tensors, respectively. The Lie-algebra of skewsymmetric matrices is denoted by $\so(3):=\{X\in \R^{3\times 3}\,|\, X^T=X\}$ and the Lie-algebra of traceless tensors is defined by $\mathfrak{sl}(3):=\{X\in \R^{3\times 3}\,|\, \tr(X)=0\}$. We consider the orthogonal decomposition $X=\dev\sym X+\skew X+\frac{1}{3}\tr(X)\cdot\id_3=\sym X+\skew X$.
	 The \textit{canonical identification} of $\mathfrak{so}(3)$ and $\R^3$ is given by $\axl\col\so(3)\to \R^3$ and its inverse $\Anti\col\R^3\to \so(3)$. We note the following properties
	 \begin{align}
	 \axl\underbrace{\matr{0&\alpha&\beta\\-\alpha&0&\gamma\\-\beta&-\gamma&0}}_{=A}:=\matr{-\gamma\\ \beta\\-\alpha}\,,\qquad |A|^2_{\R^{3\times 3}}=2\,|\axl A|^2_{\R^3}\,,\qquad A\,v=\axl(A)\times v\,.
	 \end{align}
	 and 
	 \begin{align}
	 \Anti\matr{v_1\\ v_2\\v_3}:=\matr{0&-v_3&v_2\\v_3&0&-v_1\\-v_2&v_1&0}\in \so(3)\,.
	 \end{align}
	  A matrix having the  three  column vectors $R_1,R_2, R_3$ will be written sometimes as 
	 $
	 R=(R_1\,|\, R_2\,|\,R_3)\in \R^{3\times 3}
	 $. The matrix $\Curl$ and matrix $\Div$ are defined row-wise as 
	 \begin{align}
	 \Curl R=\matr{\text{curl}\,(R^T\hspace{-0.2cm}\cdot e_1)\\\text{curl}\,(R^T\hspace{-0.2cm}\cdot e_2)\\\text{curl}\,(R^T\hspace{-0.2cm}\cdot e_3)}\in \R^{3\times 3}\,,\qquad \Div R=\matr{\div (R^T\hspace{-0.2cm}\cdot e_1)\\\div (R^T\hspace{-0.2cm}\cdot e_2)\\\div (R^T\hspace{-0.2cm}\cdot e_3)}\,.
	 \end{align}
	For $\varphi\in C^1(\Ob,\R^3)$ and for every vector $(x,y,z)\in \R^3$, we write
	\begin{align}
	\DR\.\varphi=\matr{\varphi_{1,x}&\varphi_{1,y}&\varphi_{1,z}\\\varphi_{2,x}&\varphi_{2,y}&\varphi_{2,z}\\\varphi_{3,x}&\varphi_{3,y}&\varphi_{3,z}}=(\partial_x\varphi|\partial_y\varphi|\partial_z\varphi)\,.
	\end{align}
	The mapping $m\col \os\subset \R^2\to \R^3$ will always denote the deformation of the midsurface $\os$ and we write
	\begin{align}
	\DR m=\matr{m_{1,x}&m_{1,y}\\m_{2,x}&m_{2,y}\\m_{3,x}&m_{3,y}}=(\partial_x m|\partial_y m)\,,\qquad \DT m=\matr{-m_{1,y}&m_{1,x}\\-m_{2,y}&m_{2,x}\\-m_{3,y}&m_{3,x}}=(-\partial_y m|\partial_x m)\,.
	\end{align}
        Moreover, we will use the notations
        \begin{align}
          \DivR(A_1|A_2)=\partial_xA_1+\partial_yA_2,\quad\qquad \DivT(A_1|A_2)=\partial_xA_2-\partial_yA_1,
        \end{align}
        where $A_1,A_2$ may be numbers-, vector-, or matrix-valued functions on $\os$ of the same type. Note that it is also customary to write $\Curl$ instead of $\DivT$, but the latter underscores the symmetry of $(\DR,\DivR)$ with $(\DT,\DivT)$, hence we reserve $\Curl$ for three-dimensional domains.    
        
	We assume that $h>0$ with $h\ll 1$. The three-dimensional flat thin domain $\Ob_h\subset \R^3$ is introduced as
	\begin{align}
	\Ob_h:=\os\times [-\frac{h}{2},\frac{h}{2}]\,,\qquad \os\subset \R^2\,.
	\end{align}
	We also need to define the projection operator on the first two columns
	\begin{align}
	\pj\col \R^{3\times 3}\to \R^{3\times 2}\,,\quad\qquad \pj(X)=\pj(X_1|X_2|X_3)=(X_1|X_2)=\matr{X_{11}&X_{12}\\X_{21}&X_{22}\\X_{31}&X_{32}}\,,
	\end{align}
	and the operator
	\begin{align}
	\nonumber\Pj_{\mu,\mu_c,\kappa}&\col \quad\R^{3\times 3}\to \{\sl(3)\cap \Sym(3)\}\oplus\so(3)\oplus\R\cdot \id_3\,,\\
	\Pj_{\mu,\mu_c,\kappa}(X)&=\Pj(X)=\sqrt{\mu}\dev\sym X+\sqrt{\mu_c}\skew X+\frac{\sqrt{\kappa}}{3}\tr(X)\cdot \id_3\,,\\
	\nonumber\Pj^*\Pj(X)&=\Pj^2(X)=\mu\dev\sym X+\mu_c\skew X+\frac{\sqrt{\kappa}}{3}\tr(X)\id_3\,,\qquad \Pj^*=\Pj\,.
	\end{align}
	
	\section{Three-dimensional geometrically nonlinear isotropic Cosserat model}
	The underlying three-dimensional isotropic Cosserat model can be described in terms of the standard deformation mapping $\varphi\col \Ob\subset \R^3\to \R^3$ and an additional orthogonal microrotation tensor $R\col \Ob\subset \R^3\to \SO(3)$.
	
	The goal is to find a minimiser of the following isotropic energy
	{\small
	\begin{align}\label{isotrop ener}
	\nonumber E^{\text{3D}}(\varphi,R)&=\int_\Ob\mu \big|\dev\sym(R^T\DR\varphi-\id_3)\big|^2+\mu_c\big|\skew (R^T\DR\varphi-\id_3)\big|^2+\frac{\kappa_{\text{3D}}}{2}\tr(R^T\DR\varphi-\id_3)^2\\
	\nonumber&\quad\quad+\mu\frac{L_c^2}{2}\Big(a_1\big|\dev\sym R^T\Curl R\,\big|^2+a_2\big|\skew R^T\Curl R\big|^2+\frac{a_3}{3}\tr(R^T\Curl R)^2\Big) \,{\rm{dx}} \\
	\nonumber&=\int_\Ob\mu \big|\sym(R^T\DR\varphi-\id_3)\big|^2+\mu_c\big|\skew (R^T\DR\varphi-\id_3)\big|^2+\frac{\lambda}{2}\tr(R^T\DR\varphi-\id_3)^2\\
	\nonumber&\quad\quad+\mu\frac{L_c^2}{2}\Big(a_1\big|\dev\sym R^T\Curl R\,\big|^2+a_2\big|\skew R^T\Curl R\big|^2+\frac{a_3}{3}\tr(R^T\Curl R)^2\Big) \,{\rm{dx}} \\
	&=\int_{\Ob}W_{\text{mp}}(R^T\DR \varphi)+W_{\text{disloc}}^{\text{3D}}(R^T\Curl R)\;{\rm{dx}}\to\min \quad\text{w.r.t}\quad (\varphi,R)\,.
	\end{align}}
	The problem will be supplemented by Dirichlet boundary conditions for the deformation $\varphi$ but the microrotations $R$ can be left free. 
	Here, $\mu>0$ is the standard elastic shear modulus, $\kappa_{\text{3D}}=\frac{3\lambda+2\mu}{3}>0$ is the three dimensional elastic bulk modulus (with $\lambda$ the second elastic Lam\'{e} parameter) and $\mu_c\geq0$ is the so-called Cosserat couple modulus, $a_1,a_2,a_3$ are non-dimensional non-negative weights and $L_c>0$ is a characteristic length. The energy (\ref{isotrop ener}) is the most general isotropic quadratic representation for the Cosserat model in terms of the nonsymmetric Biot type stretch tensor $\overline{U}=R^T\DR \varphi$ (first Cosserat deformation tensor \cite{cosserat1909theorie}) and the curvature measure $R^T\Curl R$ (physically linear, small strain, but geometrically nonlinear). We call 
	\begin{align}
	{\bm{{\bm{\alpha}}}}:=R^T\Curl R\,,
	\end{align}
	the \textit{second order} dislocation density tensor \cite{birsan2016dislocation}.
	Due to the orthogonality of $\dev\sym$, $\skew$ and $\tr(.)\id_3$, the curvature energy provides a complete control of 
	\begin{align}
	|{\bm{\alpha}}|^2=\big|R^T\Curl R\big|^2\qquad\text{provided}\qquad a_1,a_2,a_3>0\,.
	\end{align}
	
	For example, we can express the uni-constant isotropic curvature term
	\begin{align}
	|\DR R|^2_{\R^{3\times 3\times 3}}&=|R^T\DR R|_{\R^{3\times 3\times 3}}^2=|R^T\partial_x R|_{\R^{3\times 3}}^2+|R^T\partial_yR|_{\R^{3\times 3}}^2+|R^T\partial_zR|_{\R^{3\times 3}}^2\\
	\nonumber&=1\cdot|\dev\sym R^T\Curl R|^2_{\R^{3\times 3}}+1\cdot|\skew R^T\Curl R|^2_{\R^{3\times 3}}+\frac{1}{12}\cdot\tr(R^T\Curl R)^2=|\Pj_{1,1,\frac{1}{12}}(\alpha)|^2\,,
	\end{align}
	where we have used (\ref{gamma}) and $|\G|^2=|\axl(R^T\partial_x R)|^2+|\axl(R^T\partial_y R)|^2+|\axl(R^T\partial_z R)|^2$ together with $2|\axl(A)|_{\R^3}^2=|A|_{\R^{3\times 3}}^2$.
	Using the result in \cite{neff2008curl} 
	\begin{align}
	\big|\Curl R\,\big|^2_{\R^{3\times 3}}\geq c^+|\DR R\,|^2_{\R^{3\times 3\times 3}}\,,
	\end{align}
	shows that (\ref{isotrop ener}) controls $\DR R$ in $L^2(\Ob, \R^{3\times 3\times 3})$. 
	
	In this setting, the minimization problem is strictly convex in the strain and curvature measures $(\overline{U},{\bm{\alpha}})$ but highly non-convex w.r.t $(\varphi,R)$. Existence of minimizers for (\ref{isotrop ener}) with $\mu_c>0$ has been shown first in \cite{neff2004existence}, see also \cite{focardi2015multi,mariano2009ground,neff2015existence, neff2004existence, lankeit2017integrability, birsan2016dislocation, neff2005gamma}. The partial regularity of minimizers/statonary solutions is investigated in \cite{tel2019regularity,li2022regularity} under additional assumptions. Note also that in \cite{tel2019regularity}, the first author gives an example of a solution that exhibits a point singularity.
	
	The Cosserat couple modulus $\mu_c$ controls the deviation of the microrotation $R$ from the continuum rotation $\text{polar}(\DR \varphi)$ in the polar decomposition of $\DR \varphi=\text{polar}(\DR \varphi)\cdot\sqrt{\DR \varphi^T \DR \varphi}$, cf. \cite{neff2014grioli}.
	
	For $\mu_c\to \infty$ the constraint $R=\text{polar}(\DR \varphi)$ is generated and the model would turn into a Toupin couple stress model.

	\subsection{Connections to the Oseen-Frank energy in nematic liquid crystals  }\label{Oseen-Frank}
	In nematic liquid crystals one considers the unit-director field $n\col \mathcal{U}\subset \R^3\to \mathbb{S}^2$, minimizing the three-parameter frame-indifferent "curvature energy" \cite{virga2018variational}
	\begin{align}\label{kap}
	\int_{\mathcal{U}}\frac{1}{2}K_1|\div n|^2+\frac{1}{2}K_2|\iprod{n,\curl n}|^2+\frac{1}{2}K_3|n\times \curl n|^2\,{\rm{dx}}\,.
	\end{align}
	The uni-constant approximation $K_1=K_2=K_3$ leads to the Dirichlet type integral\footnote{For this, we note the identity (see, \cite{ball2017mathematics} eq (2.5) and \cite{alouges1997minimizing} eq (2.6))
		\begin{align}
		\tr(\DR v)^2+\iprod{v,\curl v}^2+|v\times \curl v|^2=|\DR v|^2+(|v|^2-1)\,|\curl v|^2\,,
		\end{align} valid for all sufficiently smooth vector fields $v\col \mathcal{U}\subset \R^3\to \R^3$.} 
	\begin{align}\label{int appr}
	\int_\mathcal{U} \frac{1}{2}K_1|\DR n|^2\;{\rm{dx}}\,.
	\end{align}
	The corresponding Euler-Lagrange equations for the uni-constant case are (see e.g. \cite{alouges1997minimizing})
	\begin{align}\label{euler-lag}
	\Delta n=-|\DR n|^2\cdot n\,,
	\end{align}
	see equation (\ref{Deltaanti}) for a self-contained derivation. Since (\ref{int appr}) and (\ref{euler-lag}) are just the energy and Euler-Lagrange equation for harmonic maps to spheres, all regularity theorems for harmonic maps apply. In the three dimensional case, minimizers are smooth up to a discrete set of singularities. Stationary solutions have a co-dimension $1$ singular set. In the two dimensional case, all weak solutions of (\ref{euler-lag}) are smooth, see Section (\ref{sect:backgr}) for the literature on this.
	
	For $K_1,K_2,K_3$ positive and different, any minimizer to (\ref{kap}) is smooth except for a closed set of Hausdorff dimension strictly less than 1, cf. \cite{hardt1986existence}. Ball and Bedford \cite{ball2015discontinuous} consider the sublinear regime $|\DR n|^q$, $1<q<2$.
	
	\section{Dimensional descent towards a membrane model}
	\subsection{Membrane $\Gamma$-limit}
	We are interested in a situation, where the reference configuration is flat with uniform shell thickness $h>0$, i.e. the reference configuration is taken to be of the form (see Figure \ref{Fig3} )
	\begin{align}
	\Ob_h=\os\times [-\frac{h}{2},\frac{h}{2}]\,,\qquad\os\subset \R^2\,.
	\end{align} 
	\begin{figure}[h!]
		\begin{center}
			\includegraphics[scale=0.7]{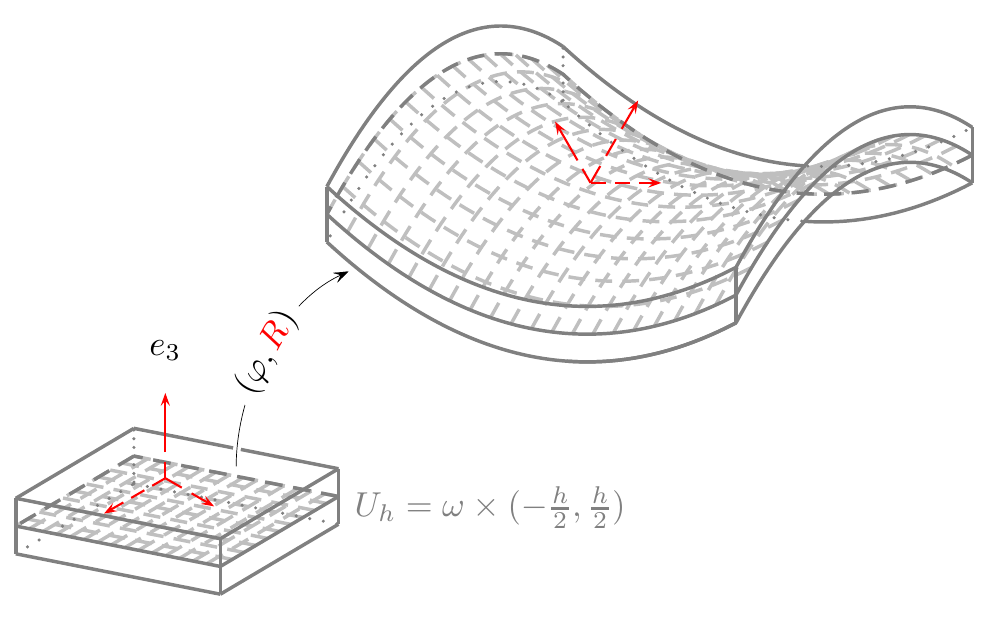}
			\caption{\footnotesize Process of dimensional reduction. Flat reference configuration with height $h$ and deformed configuration. }
			\label{Fig3}
		\end{center}
	\end{figure}
	The goal is to derive a limit two-dimensional problem, posed over the referential midsurface $\os\subset \R^2$, as $h\to 0$. This has been achieved in \cite{neff2010reissner} based on $\Gamma$-convergence arguments and using the nonlinear membrane scaling. We say that the dimensionally reduced model is a membrane, since no dedicated bending terms appear in the problem.	
	
	However, since the Cosserat model already includes curvature terms (those depending on space derivatives $\DR R$), these curvature terms "survive" in the $\Gamma$-limit procedure and scale with $h$, while canonical bending terms scale with $h^3$. This sets the Cosserat membrane model apart from more canonical membrane models \cite{neff2007geometrically1}.
	
	For the $\Gamma$-limit procedure it is useful to re-express the curvature energy from (\ref{isotrop ener}) 
	\begin{align}\label{Wcurv}
	\nonumber\mu\,\frac{L_c^2}{2}\Big(a_1&\big|\dev\sym (R^T\Curl R)\big|^2+a_2\big|\skew (R^T\Curl R)\big|^2+\frac{a_3}{3}\tr(R^T\Curl R)^2\Big)\\
	&=\mu\,\frac{L_c^2}{2}\Big(a_1\big|\dev\sym {\bm{\alpha}}\big|^2+a_2\big|\skew {\bm{\alpha}}\big|^2+\frac{a_3}{3}\tr({\bm{\alpha}})^2\Big)=\mu\frac{L_c^2}{2}|\Pj_{a_1,a_2,a_3}(\alpha)|^2=:W_{\text{disloc}}^{\text{3D}}({\bm{\alpha}})\,,
	\end{align}
	in terms of the so-called second order \textit{wryness tensor}\cite{neff2008curl, eremeyev2004nonlinear} (second Cosserat deformation tensor \cite{cosserat1909theorie})
	\begin{align}\label{wryness}
	\G:=\Big(\axl(\underbrace{R^T\partial_xR}_{\in \so(3)})|\axl (R^T\partial_y R)|\axl(R^T\partial_zR)\Big)=(\Gamma_1|\Gamma_2|\Gamma_3)\in \R^{3\times 3}\,.
	\end{align}
	Since $R^T\partial_{x_i} R\in \so(3)$, $i=1,2,3$ is skew-symmetric, we have the following relation \cite{NeffPartI,GhibaNeffPartII,nye1953some}
	\begin{align}\label{gamma}
	\G=-{\bm{\alpha}}^T+\frac{1}{2}\tr({\bm{\alpha}})\id_3\,,\hspace{2cm} {\bm{\alpha}}=-\G^T+\tr(\G)\id_3\,.
	\end{align}
	By using these formulas we note
	\begin{align}
	\dev\sym{\bm{\alpha}}=-\dev\sym\G\,,\qquad\qquad\skew {\bm{\alpha}}=\skew \G\,,\qquad\qquad\tr({\bm{\alpha}})=2\tr(\G)\,.
	\end{align}
	Now using (\ref{Wcurv}), we obtain
	\begin{align}
	\nonumber W_{\text{disloc}}^{\text{3D}}({\bm{\alpha}})&=\mu\frac{L_c^2}{2}\Big(a_1|\dev\sym{\bm{\alpha}}|^2+a_2|\skew {\bm{\alpha}}|^2+\frac{a_3}{3}\tr({\bm{\alpha}})^2\Big)\\
	\nonumber&=\mu\frac{L_c^2}{2}\Big(a_1|\dev\sym\G|^2+a_2|\skew \G|^2+4a_3\tr(\G)^2\Big)\\
	&=\mu\frac{L_c^2}{2}\Big(\widetilde{a}_1|\dev\sym\G|^2+\widetilde{a}_2|\skew \G|^2+\widetilde{a}_3\tr(\G)^2\Big)=:W_{\text{curv}}^{\text{3D}}(\G)\,,
	\end{align}
	where $\widetilde{a}_1=a_1, \widetilde{a}_2=a_2$ and $\widetilde{a}_3=4a_3$. Altogether we get 
	\begin{align}
	\nonumber W_{\text{disloc}}^{\text{3D}}({\bm{\alpha}})=W_{\text{curv}}^{\text{3D}}(\G)&=\mu\frac{L_c^2}{2}\Big(\widetilde{a}_1|\dev\sym \G|^2+\widetilde{a}_2|\skew \G|^2+\widetilde{a}_3\tr(\G)^2\Big)\,, \quad\text{with}\quad \widetilde{a}_1,\widetilde{a}_2,\widetilde{a}_3>0\,,\\
	&=\mu\,\frac{L_c^2}{2}\Big(\widetilde{b}_1|\sym \G|+\widetilde{b}_2|\skew\G|^2+\widetilde{b}_3\tr(\G)^2\Big)\,,
	\end{align}
	with $\widetilde{a}_1=\widetilde{b}_1>0$, $\widetilde{a}_2=\widetilde{b}_2>0$ and $\widetilde{b}_3=\frac{\widetilde{a}_1}{3}+\widetilde{a}_3>0$. Thus, the variational problem (\ref{isotrop ener}) can be equivalently expressed as 
	\begin{align}
	E^{\text{3D}}(\varphi,R)=\int_{\mathcal{U}_h}W_{\text{mp}}(R^T\DR \varphi)+W_{\text{curv}}^{\text{3D}}(\G)\;{\rm{dx}}\quad\to \min\quad \text{w.r.t}\quad (\varphi , R)\,.
	\end{align}
	Applying the nonlinear scaling \cite{fonseca2001inadequacy}, allows to rewrite the problem on a domain $\mathcal{U}_1=\os\times [-\frac{1}{2},\frac{1}{2}]$ with unit thickness in terms of properly scaled variables $\varphi^\natural,R^\natural$ in (thickness) $z$-direction
	\begin{align}
	E_h^{\text{3D}}(\varphi^\natural,R^\natural)=\int_{\mathcal{U}_1}W_{\text{mp}}(R^{\natural,T}\DR \varphi^\natural)+W_{\text{curv}}^{\text{3D}}(\G^\natural)\,{\rm{dx}}\,.
	\end{align}
	The descaled $\Gamma$-limit of $E_h^\text{3D}$ as $h\to 0$ is then given by \cite{neff2005gamma}
	\begin{align}
	E^{\text{2D}}(m,R)=\int_\os h\,\Big(W_{\text{mp}}^{\text{hom}}(R^T(\DR m|R_3))+W_{\text{curv}}^\text{hom}(\widehat{\G})\Big)\,{\rm{dx}}\to \quad\text{min}\quad\text{w.r.t}\quad(m,R)\,,
	\end{align}
	where $m\col\os\subset \R^2\to \R^3$ describes the deformation of the midsurface, $R\col \os\subset \R^2\to \SO(3)$ and 
	\begin{align}
	\nonumber W_{\text{mp}}^\text{hom}\big(R^T(\DR m|R_3)\big):&=\inf_{d\in \R^3} W_{\text{mp}}\big(R^T(\DR m|d)\big)\\
	\nonumber&=\mu|\sym\big((R_1|R_2)^T\DR\.m-\id_2\big)|^2+\mu_c|\skew\big((R_1|R_2)^T\DR\.m-\id_2\big)|^2\\
	\nonumber &\qquad\qquad+\frac{2\mu\.\mu_c}{\mu+\mu_c}\big(\iprod{R_3,\partial_xm}^2+\iprod{R_3,\partial_ym}^2\big)+\frac{\mu\.\lambda}{2\mu+\lambda}\tr\Big((R_1|R_2)^T\DR\.m-\id_2\Big)^2\,,\\
	\nonumber W_{\text{curv}}^{\text{hom}}(\widehat{\G}):&=\inf_{A\in \so(3)}W_{\text{curv}}^\text{\text{3D}}\big(\axl(R^T\partial_xR),\axl(R^T\partial_y R),\axl(A)\big)\\
	&=\mu\frac{L_c^2}{2}\Big(\widetilde{b}_1\Big|{\sym\matr{\widehat{\Gamma}_{11}&\widehat{\Gamma}_{12}\\\widehat{\Gamma}_{21}&\widehat{\Gamma}_{22}}}\Big|^2+\widetilde{b}_2\Big|\skew\matr{\widehat{\Gamma}_{11}&\widehat{\Gamma}_{12}\\\widehat{\Gamma}_{21}&\widehat{\Gamma}_{22}}\Big|^2\\
	&\qquad\qquad\qquad+\frac{\widetilde{b}_1\,\widetilde{b}_3}{\widetilde{b}_1+\widetilde{b}_3}\tr\matr{\widehat{\Gamma}_{11}\nonumber&\widehat{\Gamma}_{12}\\\widehat{\Gamma}_{21}&\widehat{\Gamma}_{22}}^2+2\,\frac{\widetilde{b}_1\,\widetilde{b}_2}{\widetilde{b}_1+\widetilde{b}_2}\Big|\hspace{-0.1cm}\matr{\widehat{\Gamma}_{31}\\\widehat{\Gamma}_{32}}\hspace{-0.1cm}\Big|^2\Big)\,,
	\end{align}
	where the matrix $\widehat{\G}=\big(\axl(R^T\partial_xR)|\axl(R^T\partial_y R)\big)=\pj(\G)$ is in the form (see \cite{Ghiba2022})
	\begin{align}\widehat{\G}=\big(\axl(R^T\partial_xR)|\axl(R^T\partial_y R)\big)=
	\begin{pmatrix}
	\begin{array}{c c}
	\widehat{\Gamma}_{11}& \widehat{\Gamma}_{12} \\
	\widehat{\Gamma}_{21}&\widehat{\Gamma}_{22} \\
	\widehat{\Gamma}_{31}& \widehat{\Gamma}_{32}
	\end{array}
	\end{pmatrix}\in \R^{3\times 2}\,.
	\end{align}
	 	We set $\G_\square=\matr{\widehat{\Gamma}_{11}&\widehat{\Gamma}_{12}\\\widehat{\Gamma}_{21}&\widehat{\Gamma}_{22}}$ and $\G_\perp=\matr{\widehat{\Gamma}_{31}\\\widehat{\Gamma}_{32}}$. Thus we can write the $\Gamma$-limit minimization problem as\footnote{Note the four fold appearance of the harmonic mean $\mathcal{H}$, i.e. 
	 	\begin{align}
	 	\frac{2\,\mu\mu_c}{\mu+\mu_c}=\mathcal{H} (\mu,\mu_c)\,,\qquad\qquad\frac{\mu\lambda}{2\mu+\lambda}=\frac{1}{2}\mathcal{H}(\mu,\frac{\lambda}{2})\,,\qquad\qquad \frac{\widetilde{b}_1\widetilde{b}_3}{\widetilde{b}_1+\widetilde{b}_3}=\frac{1}{2}\mathcal{H}(\widetilde{b}_1,\widetilde{b}3)\,,\qquad\qquad \frac{2\,\widetilde{b}_1\widetilde{b}_2}{\widetilde{b}_1+\widetilde{b}_2}=\mathcal{H}(\widetilde{b}_1,\widetilde{b}_2)\,.
	 	\end{align}}
	\begin{align}\label{e.q.2D}
	\nonumber\tiny  E_{\G\text{-lim}}^{\text{2D}}(m,R)&=\int_{\os}h\Big\{\mu|\sym\big((R_1|R_2)^T\DR\.m-\id_2\big)|^2+\mu_c|\skew\big((R_1|R_2)^T\DR\.m-\id_2\big)|^2\\
	 &\qquad\qquad+\frac{2\mu\.\mu_c}{\mu+\mu_c}\big(\iprod{R_3,\partial_x m}^2+\iprod{R_3,\partial_y m}^2\big)+\frac{\mu\.\lambda}{2\mu+\lambda}\tr\Big((R_1|R_2)^T\DR\.m-\id_2\Big)^2\\
	&\qquad\qquad+\mu\frac{L_c^2}{2}\Big(\widetilde{b}_1|\sym \widehat{\G}_\square|^2+\widetilde{b}_2|\skew \nonumber\widehat{\G}_\square|^2+\frac{\widetilde{b}_1\,\widetilde{b}_3}{\widetilde{b}_1+\widetilde{b}_3}\tr(\widehat{\G}_\square)^2+2\,\frac{\widetilde{b}_1\,\widetilde{b}_2}{\widetilde{b}_1+\widetilde{b}_2}|\widehat{\G}_\perp|^2\Big)\Big\}\,{\rm{dx}}\,.
	\end{align}
	If we assume that in the underlying Cosserat bulk curvature energy we have the uni-constant expression 
	\begin{align}
	 W_{\text{curv}}^{\text{3D}}(R^T\DR R)&=\frac{\mu L_c^2}{2}|R^T\DR R|^2=\frac{\mu L_c^2}{2}|\DR R|^2=\mu L_c^2\Big(|\axl(R^T\partial_xR)|^2+|\axl(R^T\partial_y R)|^2+|\axl(R^T\partial_zR)|^2\Big)\\
 	 \nonumber&=\frac{\mu L_c^2}{2}\Big(|(R^T\partial_xR)|^2+|(R^T\partial_y R)|^2+|(R^T\partial_zR)|^2\Big)\,,
	\end{align}
	then the homogenized curvature energy is given by \cite{birsan2016dislocation, fonseca2001inadequacy}
	\begin{align}\label{homo para 1}
	\nonumber W_{\text{curv}}^{\text{hom}}(R^T\DR R)&=\inf_{A\in \so(3)}W_{\text{curv}}^{\text{3D}}\Big(\axl(R^T\partial_xR),\axl(R^T\partial_y R),\axl(A)\Big)\\
	&=\mu\frac{L_c^2}{2}\Big(|R^T\partial_xR|^2+|R^T\partial_y R|^2\Big)=\mu\frac{L_c^2}{2}|R^T\DR R|^2=\mu L_c^2\,|\widehat{\G}|_{\R^{3\times 2}}^2=\mu L_c^2\,|\pj(\Gamma)|_{\R^{3\times 2}}^2\,.
	\end{align}

%
	\subsection{Alternative engineering ad-hoc dimensional descent}
	In \cite{neff2004geometrically} the three-dimensional Cosserat model has been reduced to a flat shell problem by proposing an engineering ansatz for the deformation $\varphi$ and the microrotation $R$ over the shell thickness. We let again $m\col \os\subset\R^2\to\R^3$ denote the midsurface deformation, $\overline{U}:=R^T(\DR \.m|R_3)$ the non-symmetric membrane stretch tensor and $R\col \Ob_h\subset\R^2\to \SO(3)$ denotes the microrotation tensor field with $R^T\DR R\cong(R^T\partial_xR,R^T\partial_yR)$. Since we are only interested in the membrane like response, we will neglect terms related to bending effects right away while keeping the curvature change\footnote{The missing Cosserat bending terms scaling with $h^3$ are of the type \cite[(4.5)]{neff2004geometrically} 
	\begin{align}
	\frac{h^3}{12}\Big\{\mu|\sym \Big(R^T(\DR R_3|0)\Big)|^2+\mu_c|\skew \Big(R^T(\DR R_3|0)\Big)|^2+\frac{\mu\lambda}{2\mu+\lambda}\tr\Big(\sym \Big(R^T(\DR R_3|0)\Big)\Big)^2\Big\}\,,
	\end{align}
	and the uni-constant case would appear for $\mu=\mu_c$, $\lambda=0$.} scaling with $h$.

	The dimensionally reduced energy reads then \cite[(4.5)]{neff2004geometrically}
	\begin{align}\label{compa ener}
	\nonumber E_{\text{eng}}^{\text{2D}}&=\int_{\os}h\Big\{\mu|\sym(\overline{U}-\id_3)|^2+\mu_c|\skew(\overline{U}-\id_3)|^2+\frac{\mu\lambda}{2\mu+\lambda}\tr(\overline{U}-\id_3)^2+\mu\frac{L_c^2}{2}|R^T\DR R|^2\Big\}\,\rm{dx}\\
	\nonumber&=\int_{\os}h\Big\{\mu|\dev\sym(\overline{U}-\id_3)|^2+\mu_c|\skew(\overline{U}-\id_3) |^2+\Big(\underbrace{\frac{\mu\lambda}{2\mu+3\lambda}+\frac{\mu}{3}\.\mu}_{=:\,\frac{\kappa^{\text{hom}}}{2}}\Big)\tr(\overline{U}-\id_3)^2+\mu\frac{L_c^2}{2}|R^T\DR R|^2\Big\}\,{\rm{dx}}\\
	&=\int_\os h\, \Big\{\mu|\dev\sym(R^T(\DR m|R_3)-\id_3)|^2+\mu_c|\skew(R^T(\DR m|R_3)-\id_3)|^2\\
	\nonumber&\qquad\qquad+\frac{\kappa^{\text{hom}}}{2}\tr(R^T(\DR m|R_3)-\id_3)^2 +\mu\frac{L_c^2}{2}|\DR R|^2\Big\}\,{\rm{dx}}\\
	\nonumber&=\int_{\os} h\,\Big\{\mu|\underbrace{\sym \big((R_1|R_2)^T\DR\.m-\id_2\big)}_{\text{shear-stretch energy}}|^2+\mu_c|\underbrace{\skew \big((R_1|R_2)^T\DR\.m-\id_2\big)}_{\text{drill-energy}}|^2\\
	\nonumber&\qquad\qquad+\frac{\mu+\mu_c}{2}\underbrace{\Big(\iprod{R_3,\partial_xm}^2+\iprod{R_3,\partial_ym}^2\Big)}_{\text{transverse shear energy}}+\frac{\mu\lambda}{2\mu+\lambda}\underbrace{\tr\Big((R_1|R_2)^T\DR\.m-\id_2\Big)^2}_{\text{elogational stretch energy}}+\mu\underbrace{\frac{L_c^2}{2}|R^T\DR R^2|}_{\text{curvature energy}}\Big\}\,{\rm{dx}}.
	\end{align} 
	
	Letting $\mu_c\to \infty$ in the reduced membrane model implies on the on hand that $R_3={n}_m$ is normal to the midsurface $m$ and on the other hand $\skew(R^T(\DR m|{n}_m))=0$ implies $R=\text{polar}(\DR m|{n}_m)$ (tri\`{e}dre cache\'{e}).
	
	In contrast to the representation of the energy in (\ref{compa ener}) the rigorously derived $\Gamma$-limit membrane model \cite{neff2007geometrically1}  has the energy (see equation (\ref{e.q.2D}))
	\begin{align}\label{curv}
	 E_{\G\text{-lim}}^{\text{2D}}(m,R)&=\int_{\os}h\Big\{\mu|\sym\big((R_1|R_2)^T\DR\.m-\id_2\big)|^2+\mu_c|\skew\big((R_1|R_2)^T\DR\.m-\id_2\big)|^2\\
	\nonumber &\qquad\qquad+\frac{2\mu\.\mu_c}{\mu+\mu_c}\big(\iprod{R_3,\partial_xm}^2+\iprod{R_3,\partial_ym}^2\big)+\frac{\mu\.\lambda}{2\mu+\lambda}\tr\Big((R_1|R_2)^T\DR\.m-\id_2\Big)^2+W_{\text{curv}}^{\text{hom}}(\widehat{\G})\Big\}\,\rm{dx}\,,
	\end{align}
	where $\widehat{\G}=\big(\axl(R^T\partial_xR)|\axl (R^T\partial_y R)\big)$. 
	Thus, the engineering formulation in (\ref{compa ener}) coincides with the membrane $\Gamma$-limit if and only if
	\begin{align}\label{harm arit}
	\mathcal{A}(\mu,\mu_c)=\frac{\mu+\mu_c}{2}&=\frac{2\mu\,\mu_c}{\mu+\mu_c}=\mathcal{H}(\mu,\mu_c)\qquad\Longleftrightarrow \qquad \mu=\mu_c\,,
	\end{align}
	and 
	\begin{align}
	|R^T\DR R|^2=W_{\text{curv}}^{\text{hom}}(\widehat{\G})\qquad\Longleftrightarrow\qquad \widetilde{b}_1=\widetilde{b}_2=1\,, \qquad\widetilde{b}_3=0\,.
	\end{align}
	In (\ref{compa ener})$_2$, we are also led to define the appropriate modified bulk modulus $\kappa^{\text{hom}}$ via\footnote{In linear elasticity theory for the displacement $u\col \mathcal{U}\hspace{-0.1cm}\subset\hspace{-0.1cm} \R^3\hspace{-0.1cm}\to\hspace{-0.1cm} \R^3$, the common bulk modulus $\kappa$ appears in the form $\mu\,|\dev\sym \DR u|^2+\frac{\kappa}{2}\tr(\DR u)^2$ and not as $\mu\,|\dev\sym \DR u|^2+\frac{\kappa}{3}\tr(\DR u)^2$, which would be more natural from the perspective of orthogonality of $\dev\sym\DR u$ and $\tr(\DR u)\cdot\id_3$.}
	\begin{align}
	\frac{\kappa^{\text{hom}}}{2}&:=\frac{\mu\lambda}{2\mu+\lambda}+\frac{\mu}{3}=\frac{2\mu}{3}\.\frac{2\lambda+\mu}{2\mu+\lambda} \qquad (\text{effective two-dimensional bulk modulus})\,.
	\end{align}
	Since we will need $\kappa^{\text{hom}}>0$ for our subsequent regularity  analysis, (\ref{harm arit})$_2$ implies $2\mu+\lambda>0$ and $ 2\lambda+\mu>0$.
	One can show that the latter implies for the engineering Poisson number	$\nu:=\displaystyle\frac{\lambda}{2(\mu+\lambda)}$ the bound $\nu>-\frac{1}{2}$ (instead of $\nu>-1$ for three-dimensional linear elasticity).\footnote{$2\lambda+\mu>0$ and $\mu>0$ implies $2\lambda+2\mu=2(\lambda+\mu)>0$. Therefore, $\nu=\frac{\lambda}{2(\mu+\lambda)}>-\frac{1}{2}\Leftrightarrow \frac{\lambda}{\mu+\lambda}>-1\Leftrightarrow\lambda>-(\mu+\lambda)\Leftrightarrow2\lambda+\mu>0$.}

	\section{The two-dimensional Euler-Lagrange equations}
	Henceforth, we skip all unnecessary material parameters in (\ref{compa ener}) in order to arrive at a compact representation. Again, we consider the midsurface deformation $m\col \os\subset\R^2\to \R^3$ and the orthogonal microrotation tensor $R\col\os\subset \R^2\to \SO(3)$. We set $h=1$ and assume the normalization $\mu\frac{L_c^2}{2}=1$. Moreover, we set $\kappa=\frac{3\kappa^{\text{hom}}}{2}$. Thus, the corresponding energy function describing the two-dimensional membrane shell problem is 
	\begin{align}\label{energy}
	E(m,R)&:=\int_\os\hspace{-0.05cm}\mu|\dev\sym(R^T(\DR m|R_3)-\id_3)|^2\hspace{-0.05cm}+\hspace{-0.05cm}\mu_c|\skew(R^T(\DR m|R_3)-\id_3)|^2\hspace{-0.05cm}\nonumber\\&\qquad\quad+\hspace{-0.05cm}\frac{\kappa}{3}\tr(R^T(\DR m|R_3)-\id_3)^2 +|\DR R|^2\,{\rm{dx}}\,.
	\end{align}
	We assume $\mu,\mu_c,\kappa$ to be positive.
	Remember that we have defined a linear operator $\Pj:\R^{3\times3}\to\R^{3\times3}$ by
	\begin{align}
	\Pj_{\mu,\mu_c,\kappa}(X)=\Pj(X)=\sqrt{\mu}\,\dev\sym X+\sqrt{\mu_c}\,\skew X
	+\frac{\sqrt{\kappa}}3\,(\tr X)\id_3.
	\end{align}
	Using the mutual orthogonality of $\dev\sym X$, $\skew X$, and $(\tr X)\id_3$,
	we can write down the functional in a simplified form, it reads
	\begin{align}\label{function E}
	E(m,R)=\int_\os|\Pj(R^T(\DR m|R_3)-\id_3)|^2+|\DR R|^2\,\rm{dx}.
	\end{align}
	
	Now we are going to calculate the Euler-Lagrange equations for the dimensionally reduced problem based on $E$.
	The first variation of $E$ in the direction of
	$(\vartheta,0):\Ob\to\R^3\times\R^{3\times3}$ is
	\begin{align}
 \delta E(m,R;\vartheta,0)&=2\int_\os \iprod{\Pj(R^T(\DR m|R_3)-\id_3), \Pj(R^T(\DR \vartheta|0))}\,{\rm{dx}}=2\int_\os \iprod{\Pj(R^T(\DR m|0)-(\id_2|0)), \Pj(R^T(\DR\vartheta|0))}\,{\rm{dx}},
	\end{align}
	and the first variation in the direction of
	$(0,Q):\Ob\to\R^3\times\R^{3\times3}$
	with $Q(x)\in \text{T}_R\SO(3)$ for almost all $x\in\Ob$ is
	\begin{align}
	\nonumber\delta E(m,R;0,Q)&=2\int_\os[\iprod{\Pj(R^T(\DR m|R_3)-\id_3), \Pj(Q^T(\DR m|0))}
	+\iprod{\DR R, \DR Q}]\,{\rm{dx}}\\
	&=2\int_\os[\iprod{\Pj(R^T(\DR m|0)-(\id_2|0)), \Pj(Q^T(\DR m|0))}
	+\iprod{\DR R, \DR Q}]\,{\rm{dx}}.  
	\end{align}
	Using $\Pj^*=\Pj$ and $\Pj(X^T)=\Pj(X)^T$ such that $\Pj^*\Pj=\Pj^2$, and observing $\pj (v_1|v_2|v_3)=(v_1|v_2)$,
	we rewrite these as
	\begin{align}
	\nonumber\delta E(m,R;\vartheta,0)&=2\int_\os \iprod{R\,\Pj^2(R^T(\DR m|0)-(\id_2|0)),(\DR \vartheta|0)}_{\R^{3\times 3}}\,{\rm{dx}}\\\nonumber&=2\int_\os\iprod{\pj(R\,\Pj^2(R^T(\DR m|0)-(\id_2|0))), \DR \vartheta}_{\R^{3\times 2}}\,{\rm{dx}},\\
	\delta E(m,R;0,Q)&=2\int_\os[\iprod{\Pj((\DR m|0)^TR-(\id_2|0)), \Pj((\DR m|0)^TQ)}_{\R^{3\times 3}}
	+\iprod{\DR R, \DR Q}]\,{\rm{dx}}\\
\nonumber	&=2\int_\os[\iprod{(\DR m|0)\Pj^2((\DR m|0)^TR-(\id_2|0)), Q}+\iprod{\DR R, \DR Q}]\,{\rm{dx}}.
	\end{align}
	The pair of Euler-Lagrange equations then consists of
	\begin{align}\label{ELa1}
	\Div[\pj(\zwei R\,\Pj^2(R^T(\DR m|0)-(\id_2|0)))]=0\,,
	\end{align}
	and
	\begin{align}\label{ELa2}
	\Delta R-(\DR m|0)\,\Pj^2((\DR m|0)^TR-(\id_2|0))\perp {{\rm{T}}_R\SO(3)}\,.
	\end{align}
	Note that it is {\em not\/} true that $X^T\Pj^2(X)=X^T\Pj^*\Pj X$ is symmetric for
	all matrices $X$; this is because $\Pj$ is not a matrix. Therefore,
	$(\DR m|0)\Pj^2(\DR m|0)^TR$ is not automatically orthogonal to
	$\text{T}_R\SO(3)$. And this term, being formally only in $L^1$ due to
	$\DR m$ being in $L^2$, makes the structure of the equation interesting,
        as explained in Section \ref{sect:backgr}.

For readability, we introduce a product which shares aspects of scalar products and matrix multiplication. We define $\dott:\R^{3\times 2}\times\R^{3\times2}\to\R^{3\times 3}$ by 
	\begin{align}\label{dott}
	B\dott C:=\frac{1}{2}\,B\,C^T=\frac{1}{2}\,(B|0){C^T\choose0}\,.
	\end{align}
Defining
\begin{align}
S(\DR m,R):=\pj(\zwei R\,\Pj^2(R^T(\DR m|0)-(\id_2|0))),
\end{align}
we rewrite the second term of (\ref{ELa2}) as
	\begin{align}
	\nonumber(\DR m|0)\Pj^2((\DR m|0)^TR-(\id_2|0))&=\DR m\dott\pj(\zwei \Pj^2(R^T(\DR m|0)-(\id_2|0)))\\
	&=\DR m\dott R^T\pj(\zwei R\,\Pj^2(R^T(\DR m|0)-(\id_2|0)))\\
	\nonumber&=\DR m\dott R^TS(\DR m,R)=\big(\DR m\dott S(\DR m,R)\big)R.
	\end{align}
        Noting that the projection of any matrix $X\in\R^{3\times3}$ to $\text{T}_R\SO(3)$ is
	$R\skew(R^TX)$, we find that the projection of $(\DR m\dott S(\DR m,R))R$ is
	$R\skew(R^T(\DR m\dott S(\DR m,R))R)=\skew\big(\DR m\dott S(\DR m,R)\big)R$.
        This means that the pair of Euler-Lagrange equations (\ref{ELa1})--(\ref{ELa2}) can be rewritten as
        \begin{align}\label{ELa1bis}
          &\Div S(\DR m,R)=0,
        \end{align}
        \begin{align}\label{ELa2bis}
          &\Delta R-\skew(\DR m\dott S(\DR m,R))R\quad\perp\quad T_R\SO(3).
        \end{align}
        The latter is a relation rather than an equation, but we can rewrite it as an equation. In geometric analysis, this is usually done using the second fundamental form of $\SO(3)$, but we present the calculation in a more elementary way. Our aim is to calculate the tangential part $(\Delta R)^\top$ of $\Delta R$.

Differentiating $R\,R^T\equiv \id_3$ gives
\begin{align}\label{h1}
0=\partial_i(RR^T)=(\partial_i\,R)R^T+R\,\partial_iR^T=2\sym(R\,\partial_iR^T)\,.
\end{align}
Differentiating $R^TR\equiv \id_3$ twice and summing over $i$, we find
\begin{align}
\nonumber 0&=(\Delta R^T)R+R^T\Delta R+2\sum_i\partial_iR^T\partial_iR=(\Delta R)^TR+R^T\Delta R+2\sum_i\partial_iR^T\partial_iR\\
&=2\sym(R^T\Delta R)+2\sum_i\partial_iR^T\partial_iR,
\end{align}
implying
\begin{align}\label{h2}
\sym(R^T\Delta R)=-\sum_i\partial_iR^T\partial_iR.
\end{align}
For any fixed matrix $R\in \SO(3)$, we have ${\rm{T}}_R\SO(3)=R\,\so(3)$, where $\so(3)$
is the space of skew-symmetric matrices in $\R^{3\times3}$. The projections of
any $X\in\R^{3\times3}$ to ${\rm{T}}_R\SO(3)$ or its orthogonal complement
$[{\rm{T}}_R\SO(3)]^T$ therefore are
\begin{align}
X^\top=R\skew(R^TX),\quad\qquad X^\bot=R\sym(R^TX).
\end{align}
Therefore, we can calculate the orthogonal component of $\Delta R$ as 
\begin{align}
  (\Delta R)^\bot=&R\sym(R^T\Delta R)=-\sum_iR\,\partial_iR^T\partial_iR
  =-\sum_i\skew(R\,\partial_iR^T)\partial_iR.
\end{align}
We have used (\ref{h2}) in the second ``$=$'', and (\ref{h1}) in the third.
We now abbreviate
\begin{align}
  \Omega_R:=\matr{(\Omega_R)_1\\(\Omega_R)_2}=-\matr{R\,\partial_xR^T\\ R\,\partial_y R^T}=\SKEW,\quad\quad\qquad \Omega_R\cdot\DR R:=\sum_{i=1}^2(\Omega_R)_i\,\partial_iR\in \R^{3\times 3}\,,
\end{align}
and hence have
\[
  (\Delta R)^\top=\Delta R-(\Delta R)^\bot=\Delta R-\Omega_R\cdot\DR R\,.
\]
Combining with the result of (\ref{ELa2bis}), we have calculated the tangential part of the left-hand side of (\ref{ELa2}) as $$\Delta R-\Omega_R\cdot\DR R-\skew\big(\DR m\dott S(\DR m,R)\big)R\,,$$ and thus have derived the Euler-Lagrange equations in their final form. We summarize
\begin{center}
\fbox{\begin{minipage}{14cm}
        \begin{align}\label{EL1}
          \Div S(\DR m,R)&=0,\\
               \label{EL2}
         \Delta R-\Omega_R\cdot\DR R-\skew\big(\DR m\dott S(\DR m,R)\big)R&=0,
        \end{align}
        where here
        \[
          S(\DR m,R):=\pj(\zwei R\,\Pj^2(R^T(\DR m|0)-(\id_2|0))),\qquad
          \Omega_R:=-\matr{\skew(R\,\partial_xR^T)\\\skew (R\,\partial_yR^T)}.
          \]
\end{minipage}}
\end{center}

\begin{remark} In engineering language, (\ref{EL1}) is the balance of forces,
  while \ref{EL2} is the balance of angular momentum equation. The tensor
  \begin{align}
    T(\DR m,R):=\zwei\,\Pj^2\,(R^T(\DR m|0)-(\id_2|0))
  \end{align}
  is the {\em non-symmetric Biot-type stress tensor\/} (symmetric if
  $\mu_c=0$), while
  \begin{align}
    S(\DR m,R)=\pi_{12}(R\,T(\DR m,R))
  \end{align}
  is the {\em first Piola-Kirchhoff type force-stress tensor.} Note the
  analogy to the corresponding tensors in the 3D-Cosserat model presented in (\ref{S_1}) and
  (\ref{S_11}).
\end{remark}
        
        \section{Regularity}

        The objective of this section is to prove our main theorem.
        \begin{theorem}[interior regularity]\label{MainTh}
          Every weak solution $(m,R)\in W^{1,2}(\os,\R^3\times \SO(3))$ of (\ref{EL1})-(\ref{EL2}) is smooth on the interior of $\os$.
        \end{theorem}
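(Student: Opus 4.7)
The plan is to adapt Rivière's conservation-law approach to two-dimensional harmonic map type systems, augmented by the Coifman--Lions--Meyer--Semmes Hardy-space estimate to handle the coupling term coming from equation (\ref{EL1}). The two key structural facts are already visible in the derivation: $\Omega_R$ takes values in skew-symmetric matrices, which is exactly the hypothesis needed for Rivière's framework; and the coupling term $\skew(\DR m\dott S(\DR m,R))R$ in (\ref{EL2}), although only in $L^1$ a priori, has the form (gradient)$\cdot$(divergence-free field)$\cdot$(bounded field), and hence lies in the Hardy space $\mathcal{H}^1$ by Coifman--Lions--Meyer--Semmes applied to the columns of $\DR m$ and $S(\DR m,R)$, together with $R\in L^\infty(\os,\SO(3))$.

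First, I would recast (\ref{EL2}) as $\Delta R = \Omega_R\cdot\DR R + f$ with $f:=\skew(\DR m\dott S(\DR m,R))R$, and, following Rivière, construct a gauge $P\in W^{1,2}(\os,\SO(3))$ on a small ball $B_{R_0}\subset\subset\os$ so that the transformed connection $P^T\Omega_R P+P^T\DR P$ becomes divergence-free. This produces a conservation law of the form $\Div(P^T\DR R-P^T\Omega_R R)=P^Tf$, which already places the Rivière part of the right-hand side in $\mathcal{H}^1$. Smallness of $\int_{B_{R_0}}|\DR R|^2$, needed to solve the gauge equation, is available by absolute continuity of the energy on sufficiently small balls.

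The heart of the proof is a Morrey-type decay estimate run simultaneously for $m$ and $R$. Using the conservation law and Hardy--BMO duality (against a BMO test function produced by the Hodge decomposition of $P^T\DR R-P^T\Omega_R R$), I would obtain a decay inequality of the form $\int_{B_r}|\DR R|^2\le C(r/R_0)^{2\alpha}\int_{B_{R_0}}|\DR R|^2+\text{(coupling)}$, where the coupling term is controlled via the $\mathcal{H}^1$-bound on $f$. Equation (\ref{EL1}) is linear in $\DR m$ with $L^\infty$-coefficients determined by $R$, and since $\Pj^2$ is positive definite (thanks to $\mu,\mu_c,\kappa>0$), a Campanato-type estimate yields a matching decay for $\int_{B_r}|\DR m|^2$ driven by $\int|\DR R|^2$. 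Coupling the two decays in the spirit of Rivière--Struwe gives $m,R\in C^{0,\alpha}_{\mathrm{loc}}(\os)$. Once Hölder continuity is available, standard bootstrap finishes: (\ref{EL1}) becomes a linear divergence-form equation with Hölder coefficients, so Schauder theory gives $m\in C^{1,\alpha}_{\mathrm{loc}}$; then $f$ is Hölder continuous and (\ref{EL2}) is a semilinear equation $\Delta R=G(R,\DR R,\DR m)$ with smooth $G$, yielding $R\in C^{2,\alpha}_{\mathrm{loc}}$, and iterating yields smoothness.

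The main obstacle I anticipate is not the individual handling of the two problematic terms in (\ref{EL2}) but their integration into a single iteration scheme. The Rivière conservation law requires smallness of $\int|\DR R|^2$, whereas the Hardy-space estimate for $f$ needs smallness of $\int|\DR m|^2$; moreover $S=\pj(2R\Pj^2(R^T(\DR m|0)-(\id_2|0)))$ depends on both unknowns, so the Morrey iteration for $R$ cannot be carried out with $m$ frozen. Closing the coupled decay in one stroke, together with verifying that the $\mathcal{H}^1$ bound on $f$ survives the gauge transformation (which only produces $W^{1,2}$ factors, not Lipschitz), is the delicate technical point that forces the simultaneous treatment described in the introduction.
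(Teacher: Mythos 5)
Your outline for the H\"older-continuity step is essentially the paper's own route: Coulomb gauge \`a la Rivi\`ere (Lemma \ref{L2}), Hodge decomposition (Lemma \ref{L3}), the Coifman--Lions--Meyer--Semmes/Hardy--BMO mechanism to absorb both $\Omega_R\cdot\DR R$ and the coupling term $\skew(\DR m\dott S(\DR m,R))R$, and a \emph{simultaneous} Morrey decay for $m$ and $R$ closed by the ellipticity of $\xi\mapsto\Pj^2(R^T(\xi|0))$. You correctly identify the delicate point (the coupled iteration, and the fact that the gauge only produces $W^{1,2}$ factors, which is exactly why one needs the trilinear estimate of Lemma \ref{L1} with an $L^\infty\cap W^{1,2}$ test factor rather than a plain CLMS product estimate). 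The only technical divergence is that you run the decay on $\int_{B_r}|\DR R|^2$, whereas the paper works in $M^{3/2,1/2}$ with $L^{3/2}$-norms, which is the form in which Lemma \ref{L1} (Schikorra's version of the duality estimate) is available in dimension $2$; both normalizations can be made to work, but the exponent bookkeeping is not interchangeable line by line.

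There is, however, a genuine gap in your bootstrap for $R$. After $m\in C^{1,\alpha}_{\rm loc}$ and $R\in C^{0,\alpha}_{\rm loc}$, equation (\ref{EL2}) is \emph{not} a semilinear equation to which standard Schauder or $L^p$ theory applies: the right-hand side still contains $\Omega_R\cdot\DR R$, which is quadratic in $\DR R$ and hence a priori only in $L^1$ (H\"older continuity of $R$ gives no pointwise control of $\DR R$). Saying that ``$G$ is smooth'' does not help, because $G$ is evaluated on $\DR R\in L^2$. One must first upgrade $\DR R$ itself, e.g.\ via the harmonic-map higher-regularity machinery for equations with $|\Delta u|\le C(|\DR u|^2+1)$ (the paper invokes Moser's Lemmas 3.6--3.7 and Proposition 3.2 to get $\DR R\in L^\infty_{\rm loc}$, after a small trick to absorb the additive constant $+1$ into a $|\DR u|^2$ bound); only then is $\Delta R\in L^p_{\rm loc}$ for all $p$, so that $W^{2,p}$-theory, Sobolev embedding, and iterated Schauder estimates yield smoothness. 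Without this intermediate step your final ``standard bootstrap'' does not go through.
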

        
  \begin{remark}
  	
  	Due to the results in \cite{neff2004geometrically, neff2007geometrically, neff2007geometrically1}, we know that energy minimizers  to problem (\ref{energy}) exist and these are weak solutions $(m,R)\in W^{1,2}(\os,\R^3\times \SO(3))$ of (\ref{EL1})-(\ref{EL2}). Since the problem is highly nonlinear, uniqueness can neither be shown nor is it expected.
  \end{remark}
	\subsection{H\"older regularity}\label{suse:hoel}
	
	We observe that the last term in (\ref{EL2}) is, up to ``skew'' and
	the harmless factor $R$,
	the product of a ``gradient'' $\DR m$ with a divergence-free quantity $S(\DR m,R)$,
	with both factors in $L^2$.
	As we know from \cite{coifman1993compensated}, such a product is in the Hardy space ${\mathcal H}^1$
	rather than just in $L^1$, and we will use arguments from \cite{riviere2008partial}
	that tell us how to handle the additional $R$ factor. A standard source for the Hardy space $\mathcal{H}^1$ is in Chapter III of Stein's book \cite{stein1993harmonic}.
	Note that \cite{riviere2008partial}(see also \cite{tel2019regularity}) is about harmonic maps in $\ge3$ dimensions, and it is
	Rivi\`ere's paper \cite{riviere2007conservation} about two-dimensional harmonic maps that is
	mostly the basis of what we are doing here. Schikorra \cite{schikorra2010remark} found some
	simplification to the arguments of \cite{riviere2007conservation} and \cite{riviere2008partial}, and the most
	accessible account of all these arguments to date is the textbook \cite{giaquinta2013introduction}
	which allows us to handle the Euler-Lagrange equation (\ref{EL2}) quite
	flexibly. Note that our equation (\ref{EL2}) is more general than the
	equations of the form $\Delta R-\Omega\cdot DR=0$ studied in those papers and the book \cite{giaquinta2013introduction},
	since we have the extra term $-R\skew(\DR m\dott S(\DR m,R))$ of order $0$ in
	$R$. We are lucky that we have the additional structure coming from
	$S(\DR m,R)$ being divergence-free, again implying that up to a bounded factor the extra term is in ${\mathcal H}^1$. Without that additional information, we would not know how to
	incorporate that into the existing regularity theory.
	
        It will be crucial to use Morrey norms, at least locally. We say that $u\in L^p(U)$ is in the Morrey space $M^{p,s}(U)$ if
        \begin{align}
          [u]_{M^{p,s}(U)}^p:=\sup\Big\{r^{-s}\int_{B_r(x_0)\cap U}|u|^p\,{\rm{dx}}\;\Big|\;x_0\in U,r\in(0,1)\Big\}<\infty.
        \end{align}
        Having this, we define the Morrey norm by $\|u\|_{M^{p,s}(U)}:=[u]_{M^{p,s}(U)}+\|u\|_{L^p(U)}$.    

	We need the following lemmas. The first one is a special case of Lemma~A.1
	in \cite{schikorra2010remark}, in the spirit of similar estimates from \cite{coifman1993compensated}. This is
	where Hardy-BMO duality comes in as a hidden ingredient of our proof.

	\begin{lemma}\label{L1}
		There is a constant $C$ such that for all choices of $x_0\in\R^2$, $r>0$,
		and functions
		$a\in W^{1,2}(B_{2r}(x_0))$, $\Gamma\in L^2(B_r(x_0),(\R^2)^*)$,
		$b\in W^{1,2}_0\cap L^\infty(B_r(x_0))$ with $\Div\Gamma=0$ in the weak sense
		on $B_r(x_0)$, we have
		\[
		\Big|\int_{B_r(x_0)}\langle\DR a,\Gamma\rangle\,b\;{\rm{dx}}\,\Big|
		\le C\,\|\Gamma\|_{L^2(B_r(x_0))}\,\|\DR b\|_{L^2(B_r(x_0))}\,
		\|\DR a\|_{M^{3/2,1/2}(B_{2r}(x_0))}.
		\]
	\end{lemma}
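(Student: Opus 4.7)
The plan is to convert the integral, via integration by parts, into a pairing between an element of the Hardy space $\mathcal{H}^1$ (produced by a Coifman-Lions-Meyer-Semmes type compensated compactness) and a BMO function (produced from the Morrey hypothesis on $\DR a$ via a Poincar\'e estimate). The divergence-free condition on $\Gamma$ and the vanishing of $b$ on $\partial B_r(x_0)$ together supply exactly the structure required for both ingredients, and the trilinear bound then falls out of Hardy-BMO duality.

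First, using $b\in W^{1,2}_0(B_r(x_0))$ and $\Div\Gamma=0$ weakly on $B_r(x_0)$, integration by parts yields, for every constant $c\in\R$,
\begin{align*}
\int_{B_r(x_0)}\langle\DR a,\Gamma\rangle\,b\,dx\;=\;-\int_{B_r(x_0)}(a-c)\,\langle\Gamma,\DR b\rangle\,dx,
\end{align*}
the boundary term vanishing because of $b$, the divergence contribution because of $\Gamma$. I choose $c:=|B_{2r}(x_0)|^{-1}\int_{B_{2r}(x_0)}a\,dy$.

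The Morrey hypothesis now delivers a BMO bound for $a-c$ on $B_{2r}(x_0)$: for every $B_s(y)\subset B_{2r}(x_0)$ with $s<1$, the definition of $M^{3/2,1/2}$ yields $\|\DR a\|_{L^{3/2}(B_s)}\le s^{1/3}[\DR a]_{M^{3/2,1/2}(B_{2r}(x_0))}$, and combined with Poincar\'e and H\"older's inequality this produces
\begin{align*}
|B_s|^{-1}\int_{B_s(y)}|a-a_{B_s}|\,dx\;\leq\;C\,s^{-1/3}\,\|\DR a\|_{L^{3/2}(B_s(y))}\;\leq\;C\,[\DR a]_{M^{3/2,1/2}(B_{2r}(x_0))},
\end{align*}
which by the Campanato characterization of BMO bounds the BMO-seminorm of $a-c$ on $B_{2r}(x_0)$ accordingly; a standard BMO extension then produces $\widetilde{a-c}\in\mathrm{BMO}(\R^2)$ with the same bound. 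For the product $\langle\Gamma,\DR b\rangle$, I extend $b$ by zero outside $B_r(x_0)$ (admissible since $b\in W^{1,2}_0$) and write $\Gamma=\DT\xi$ on the simply connected ball $B_r(x_0)$ via its stream function $\xi\in W^{1,2}$; a Sobolev extension of $\xi$ to $\R^2$ yields a divergence-free $\widetilde\Gamma\in L^2(\R^2)$ with $\widetilde\Gamma=\Gamma$ on $B_r(x_0)$ and $\|\widetilde\Gamma\|_{L^2(\R^2)}\le C\|\Gamma\|_{L^2(B_r(x_0))}$. The Coifman-Lions-Meyer-Semmes theorem then places $\langle\widetilde\Gamma,\DR b\rangle\in\mathcal{H}^1(\R^2)$ with norm $\le C\|\Gamma\|_{L^2(B_r(x_0))}\|\DR b\|_{L^2(B_r(x_0))}$, and since it is supported in $\overline{B_r(x_0)}$ and agrees there with $\langle\Gamma,\DR b\rangle$, applying Fefferman-Stein $\mathcal{H}^1$-BMO duality to $\widetilde{a-c}$ and this product delivers the claim.

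The main obstacle is the careful localization: the Coifman-Lions-Meyer-Semmes compensated compactness and Fefferman-Stein duality are global statements on $\R^2$, so the stream-function-plus-Sobolev extension of $\Gamma$, the zero extension of $b$, and the BMO extension of $a-c$ must cooperate. Alternatively one can work directly with the local Hardy space $h^1(B_r(x_0))$ and its dual $\mathrm{bmo}(B_r(x_0))$, as in Schikorra's appendix, bypassing global extensions at the cost of a more delicate atomic analysis.
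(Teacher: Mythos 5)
Your proposal is correct and follows essentially the same route as the paper: the paper does not prove Lemma \ref{L1} itself but quotes it as a special case of Schikorra's Lemma A.1, whose proof is exactly your argument --- the div-curl product $\langle\Gamma,\DR b\rangle$ is placed in $\mathcal{H}^1$ by Coifman--Lions--Meyer--Semmes after extending $\Gamma$ (via a stream function) and $b$ (by zero), the Morrey hypothesis on $\DR a$ yields a BMO bound on $a$ through Poincar\'e, and Fefferman--Stein duality concludes. The only step you gloss over is that after integration by parts the integral $\int_{B_r(x_0)}(a-c)\langle\Gamma,\DR b\rangle\,{\rm dx}$ need not converge absolutely (so neither the identity nor its identification with the $\mathcal{H}^1$--BMO pairing is immediate); this is repaired by a routine mollification of $a$, applying the estimate to the smooth approximations and passing to the limit only on the left-hand side, or by the local Hardy space route you mention, so it is a standard technicality rather than a gap.
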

	Another one, due to Rivi\`ere \cite{riviere2007conservation} and Schikorra \cite{schikorra2010remark}, can be found
	as a special case of Theorem~10.57 in \cite{giaquinta2013introduction}.
	
	\begin{lemma}\label{L2}
		For every $\Omega\in L^2(B^2,(\R^2)^*\otimes \so(3))$, there
		exists $G\in W^{1,2}(B^2,\SO(3))$ such that
		\begin{align}
		\Div(G^{-1}\Omega G-G^{-1}\DR G)=0\mbox{ \ \ in }B^2
		\end{align}
		and \footnote{$(\R^2)^*\otimes \so(3)$ is isomorphic to $\so(3)\times \so(3)$. }
		\begin{align}
		\|\DR G\|_{L^2(B^2)}+\|G^{-1}\Omega G-G^{-1}\DR G\|_{L^2(B^2)}
		\le 3\,\|\Omega\|_{L^2(B^2)}.
		\end{align}
	\end{lemma}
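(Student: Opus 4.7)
\emph{Approach.} I would obtain $G$ as a minimizer of the natural gauge-invariant functional
\begin{align*}
F(G) := \int_{B^2}\bigl|G^{-1}\Omega G - G^{-1}\DR G\bigr|^2\,{\rm{dx}},
\qquad G\in\mathcal{A}:=W^{1,2}(B^2,\SO(3)),
\end{align*}
and read the divergence-free condition off from the Euler--Lagrange equation. The constant competitor $G\equiv\id_3$ gives $F(\id_3)=\|\Omega\|_{L^2(B^2)}^2$, so $\inf_{\mathcal{A}}F\le\|\Omega\|_{L^2(B^2)}^2$.

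\emph{Existence.} Write $\Phi(G):=G^{-1}\Omega G-G^{-1}\DR G$. For a minimizing sequence $(G_n)$, the isometry property of $G_n\in\SO(3)$ gives $|\DR G_n|=|G_n^{-1}\DR G_n|=|G_n^{-1}\Omega G_n-\Phi(G_n)|$ pointwise, hence $\|\DR G_n\|_{L^2(B^2)}^2\le 2F(G_n)+2\|\Omega\|_{L^2(B^2)}^2$. Thus $(G_n)$ is bounded in $W^{1,2}$, and Rellich's theorem together with $\SO(3)$-valuedness yields (after passing to a subsequence) $G_n\rightharpoonup G$ in $W^{1,2}$, $G_n\to G$ strongly in every $L^p$ with $p<\infty$, and a.e., so $G\in\SO(3)$ a.e. Combining strong $L^p$ convergence of $G_n$ (and the uniform $L^\infty$ bound coming from $\SO(3)\subset\R^{3\times 3}$) with weak $L^2$ convergence of $\DR G_n$, I obtain $\Phi(G_n)\rightharpoonup\Phi(G)$ weakly in $L^2$, so lower semicontinuity of the $L^2$-norm gives $F(G)\le\liminf F(G_n)$; hence $G$ is a minimizer.

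\emph{Euler--Lagrange.} Admissible variations are $G_t:=Ge^{tA}$ for $A\in C_c^\infty(B^2,\so(3))$; since $e^{tA}\in\SO(3)$, each $G_t$ lies in $\mathcal{A}$. A direct calculation using $\tfrac{d}{dt}|_{t=0}G_t^{-1}\Omega G_t=[G^{-1}\Omega G,A]$ and $\tfrac{d}{dt}|_{t=0}G_t^{-1}\DR G_t=[G^{-1}\DR G,A]+\DR A$ gives
\begin{align*}
\left.\tfrac{d}{dt}\right|_{t=0}\Phi(G_t)\;=\;[\Phi(G),A]-\DR A.
\end{align*}
Now both $G^T\Omega G$ and $G^T\DR G$ are skew (the latter from differentiating $G^TG\equiv\id_3$), so $\Phi(G)$ is $\so(3)$-valued; and pointwise $\langle X,[X,A]\rangle=\tr(X[X,A]^T)=0$ for $X,A\in\so(3)$ by cyclic invariance of the trace together with skew symmetry. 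Hence the stationarity condition collapses to $\int_{B^2}\langle\Phi(G),\DR A\rangle\,{\rm{dx}}=0$ for all admissible $A$, which is exactly $\Div\Phi(G)=0$ in the weak sense on $B^2$.

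\emph{Norm bound and main obstacle.} Minimality yields $\|\Phi(G)\|_{L^2(B^2)}^2=F(G)\le\|\Omega\|_{L^2(B^2)}^2$, and writing $\DR G=G(G^{-1}\Omega G-\Phi(G))$ together with the isometry of $G$ gives
\begin{align*}
\|\DR G\|_{L^2(B^2)}\le\|\Omega\|_{L^2(B^2)}+\|\Phi(G)\|_{L^2(B^2)}\le 2\|\Omega\|_{L^2(B^2)},
\end{align*}
summing to the claimed factor $3$. The main potential obstacle is the weak lower semicontinuity step: I must pass to the limit in the nonlinear products $G_n^{-1}\Omega G_n$ and $G_n^{-1}\DR G_n$ under only weak $W^{1,2}$ convergence of $G_n$. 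This is only possible because the target $\SO(3)$ is compact, producing strong $L^p$ convergence of $G_n$ for every $p<\infty$, which in turn lets one multiply against the weakly $L^2$-convergent factors $\DR G_n$ and the fixed $\Omega$. Without the compactness of the target, the variational strategy would break down.
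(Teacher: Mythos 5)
Your proof is correct, and it is essentially the argument behind the result as cited: the paper does not prove Lemma \ref{L2} itself but refers to Rivi\`ere/Schikorra via Theorem 10.57 of Giaquinta--Martinazzi, and the proof given there is exactly your variational Coulomb-gauge construction (minimize $\int_{B^2}|G^{-1}\Omega G-G^{-1}\DR G|^2$ over $W^{1,2}(B^2,\SO(3))$, use compactness of the target for existence, right-variations $Ge^{tA}$ with $A\in C^\infty_c(B^2,\so(3))$ for the divergence-free condition, and comparison with $G\equiv\id_3$ for the bound with constant $3$). So you have reproduced, correctly, the same route the paper relies on rather than a genuinely different one.
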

	
	We also need a version of the Hodge decomposition theorem. This one is a
	special case of \cite[Corollary 10.5.1]{iwaniec2001geometric}, adapted
        from the differential forms version to 2-dimensional vector calculus
        as in \cite[Corollary 10.70]{giaquinta2013introduction}	
	\begin{lemma}\label{L3}
	  Let $p\in(1,\infty)$. On $B_r(x_0)\subset\R^2$, every 1-form
          $V\in L^p(B_r(x_0),(\R^2)^*)$ can be decomposed uniquely as
		\[
		V=\DR\alpha+\DT\beta+h,
		\]
          where ${\alpha}\in W^{1,p}(B_r(x_0))$, $\beta\in W^{1,p}_0(B_r(x_0))$,
	  and $h\in C^\infty(B_r(x_0),(\R^2)^*)$ is harmonic. Moreover, there
          is a constant $C$ depending only on $p$, such that
          \begin{align}
		\|\alpha\|_{W^{1,p}(B_r(x_0))}+\|\beta\|_{W^{1,p}(B_r(x_0))}+\|h\|_{L^p(B_r(x_0))}
		\le C\,\|V\|_{L^p(B_r(x_0))}.
	  \end{align}
	\end{lemma}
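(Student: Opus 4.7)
The plan is to follow the two-stage strategy indicated at the beginning of Section~\ref{suse:hoel}: first establish interior H\"older continuity of both $m$ and $R$, then bootstrap via classical Schauder theory to reach $C^\infty$. The H\"older step is the nontrivial one, since it requires the Rivi\`ere--Schikorra gauge-theoretic machinery, adapted to accommodate the extra coupling term in (\ref{EL2}) that is not present in the classical harmonic-map equation.

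For the H\"older step, I would localize to a ball $B_{2r}(x_0)\Subset\os$ with $r$ chosen so that $\int_{B_{2r}(x_0)}(|\DR m|^2+|\DR R|^2)\,{\rm{dx}}$ lies below an absolute smallness threshold (always possible by absolute continuity of the integral). The first move is to apply Lemma~\ref{L2} to $\Omega_R$ on $B_r(x_0)$, producing a Coulomb gauge $G\in W^{1,2}(B_r(x_0),\SO(3))$ with $G^{-1}\Omega_R G-G^{-1}\DR G$ divergence-free and controlled in $L^2$ by $\|\Omega_R\|_{L^2}\lesssim\|\DR R\|_{L^2}$. Rewriting (\ref{EL2}) against $G$ converts the principal nonlinear term $\Omega_R\cdot\DR R$ into conservation-law form, a divergence-free vector field paired with a gradient, which is the Hardy-space $\mathcal{H}^1$ structure of Coifman--Lions--Meyer--Semmes. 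I then run a simultaneous Morrey-space iteration: test the transformed $R$-equation and the $m$-equation against $W^{1,2}_0\cap L^\infty$ correctors obtained by comparing $R$ and $m$ to their harmonic extensions on $B_\rho(x_0)$, apply Lemma~\ref{L3} where needed to exhibit divergence-free parts of test fields, and invoke Lemma~\ref{L1} to control the critical bilinear integrals. The outcome is a joint decay estimate $\int_{B_\rho}(|\DR m|^2+|\DR R|^2)\le C(\rho/r)^{2\alpha}\int_{B_r}(|\DR m|^2+|\DR R|^2)$ for $0<\rho<r$, from which the Morrey--Campanato characterization of H\"older continuity delivers $(m,R)\in C^{0,\alpha}_{\rm loc}(\os)$.

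The main obstacle is the coupling term $-\skew(\DR m\dott S(\DR m,R))R$ in (\ref{EL2}), which is absent from the purely harmonic-map setting of \cite{riviere2007conservation} and \cite{giaquinta2013introduction}. Its naive integrability is only $L^1$ and, without further structure, would block the iteration at the critical scale. The rescue comes from the structure pointed out in Section~\ref{sect:backgr}: $\DR m$ is a pure gradient, $S(\DR m,R)$ is divergence-free by (\ref{EL1}), and $R\in L^\infty$. Testing (\ref{EL2}) against a corrector $b$ supported in $B_r(x_0)$ produces an integral of precisely the form $\int_{B_r}\langle\DR m,S(\DR m,R)\rangle\,b\,{\rm{dx}}$ (up to the harmless bounded factor $R$ and the projection onto $\so(3)$), to which Lemma~\ref{L1} applies with $a=m$, $\Gamma=S(\DR m,R)$, yielding a bound by $\|\DR m\|_{M^{3/2,1/2}(B_{2r})}\,\|\Gamma\|_{L^2(B_r)}\,\|\DR b\|_{L^2(B_r)}$. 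Because (\ref{EL1}) is linear and uniformly elliptic in $\DR m$ with $L^\infty$ coefficients depending on $R$, a parallel Morrey iteration for $m$ produces exactly the Morrey control of $\DR m$ required, and the two decay estimates close simultaneously.

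Once $(m,R)\in C^{0,\alpha}_{\rm loc}$, the higher regularity is routine. Equation (\ref{EL1}) is a linear elliptic system in divergence form for $m$ with coefficients depending smoothly on $R$; with $R$ now H\"older continuous, Schauder theory upgrades $m$ to $C^{1,\alpha}_{\rm loc}$, hence $\DR m\in C^{0,\alpha}_{\rm loc}$. Substituting back into (\ref{EL2}), the right-hand side $\Omega_R\cdot\DR R+\skew(\DR m\dott S(\DR m,R))R$ becomes a semilinear lower-order term with H\"older data in its coefficients, so standard interior estimates for $\Delta R=\mbox{(lower order)}$ lift $R$ to $C^{1,\alpha}_{\rm loc}$. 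A standard two-sided bootstrap, alternately feeding improved regularity of $R$ into the $m$-equation and vice versa, then yields $(m,R)\in C^\infty_{\rm loc}(\os)$.
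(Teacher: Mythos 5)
Your proposal does not prove the statement it was supposed to prove. The statement is Lemma \ref{L3}, the $L^p$ Hodge decomposition $V=\DR\alpha+\DT\beta+h$ of a 1-form on a disk, together with the norm estimate with a constant depending only on $p$. What you have written is instead an outline of the proof of Proposition \ref{Prop:Hoel} and Theorem \ref{MainTh} (Coulomb gauge via Lemma \ref{L2}, Morrey-type decay iteration using Lemma \ref{L1}, then Schauder bootstrap), and in the course of that outline you explicitly invoke Lemma \ref{L3} as a known tool (``apply Lemma \ref{L3} where needed''). So the object to be established is assumed rather than proved: there is no construction of $\alpha$, $\beta$ and $h$, no uniqueness argument, and no derivation of the $W^{1,p}$/$L^p$ bounds. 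For the record, the paper itself does not prove the lemma either; it quotes it as a special case of \cite[Corollary 10.5.1]{iwaniec2001geometric}, in the two-dimensional vector-calculus formulation of \cite[Corollary 10.70]{giaquinta2013introduction}. A citation of that kind would have been an acceptable answer here; a sketch of a different theorem is not.

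If you want a self-contained argument, the standard route is short: given $V\in L^p(B_r(x_0),(\R^2)^*)$, solve the Dirichlet problem $\Delta\beta=\DivT V$ with $\beta\in W^{1,p}_0(B_r(x_0))$ (the right-hand side lies in $W^{-1,p}$), and a Neumann-type problem $\Delta\alpha=\DivR V$ with $\alpha\in W^{1,p}(B_r(x_0))$, both with Calder\'on--Zygmund estimates $\|\DR\alpha\|_{L^p}+\|\DR\beta\|_{L^p}\le C(p)\,\|V\|_{L^p}$. Setting $h:=V-\DR\alpha-\DT\beta$, one checks $\DivR h=0$ and $\DivT h=0$ in the sense of distributions, so $h$ is componentwise harmonic by Weyl's lemma, in particular smooth, and its $L^p$ bound follows by the triangle inequality. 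Uniqueness is obtained by applying $\DivT$ and $\DivR$ to a vanishing decomposition: $\DivT$ gives $\Delta\beta=0$ with zero boundary values, hence $\beta=0$, and the remaining splitting between $\DR\alpha$ and the harmonic field $h$ is rigid up to the usual normalization fixed by the boundary-value problems (this is exactly how the cited references phrase uniqueness). None of this machinery appears in your text, so as an answer to the stated lemma the proposal has a genuine gap: it addresses the wrong statement.
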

	
        We now start our regularity proof. Our first step is local H\"older continuity.

        \begin{proposition}\label{Prop:Hoel} Assume that $(m,R)\in W^{1,2}(\os,\R^3\times \SO(3))$ is a weak solution of (\ref{EL1})--(\ref{EL2}). Then there is $\beta>0$ such that $m$ and $R$ are $C^{0,\beta}$-H\"older continuos locally on $\omega$.
        \end{proposition}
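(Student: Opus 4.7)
The plan is to adapt Rivière's gauge-theoretic regularity method for harmonic maps \cite{riviere2007conservation}, in the streamlined form of \cite{schikorra2010remark} and \cite{giaquinta2013introduction}, augmented by the Rivière--Struwe device \cite{riviere2008partial} that handles the extra order-zero coupling term in (\ref{EL2}). The target is a Morrey decay estimate
\[
\phi(r) := \int_{B_r(x_0)} \big(|\DR m|^2 + |\DR R|^2\big)\,{\rm{dx}} \le C\, r^{2\beta}
\]
on every ball compactly contained in $\os$; by the Morrey--Campanato embedding this yields $m, R \in C^{0,\beta}_{\text{loc}}$.

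I begin by fixing $x_0 \in \os$ and choosing $r_0$ so small that $\phi(2r_0)$ lies below the Rivière--Uhlenbeck smallness threshold (possible by absolute continuity of the integral). On $B_{2r_0}(x_0)$, I apply Lemma~\ref{L2} to $\Omega_R$ to obtain a gauge $G \in W^{1,2}(B_{2r_0}, \SO(3))$ such that $A := G^{-1}\Omega_R G - G^{-1}\DR G$ is divergence-free, with $\|\DR G\|_{L^2} + \|A\|_{L^2} \le 3\|\Omega_R\|_{L^2}$. Left-multiplying (\ref{EL2}) by $G$ and using the product rule converts the critical term $\Omega_R\cdot\DR R$ into a conservation-law form in which only the divergence-free quantity $A$ appears. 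I then use Lemma~\ref{L3} to Hodge-decompose the relevant 1-forms on each smaller ball $B_r(x_0)$: the harmonic piece enjoys automatic quadratic decay on $B_{\theta r}$, and the gradient and co-gradient pieces are controlled by the transformed equation.

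The heart of the argument is the treatment of the coupling term $\skew(\DR m \dott S(\DR m, R))R$, which a priori lies only in $L^1$. The key observation is three-fold: $S(\DR m, R)$ is divergence-free by (\ref{EL1}), $\DR m$ is a gradient, and $R$ is pointwise bounded. Pairing (\ref{EL2}) against a test function of the form $G\eta$ with $\eta \in W^{1,2}_0 \cap L^\infty(B_r(x_0))$ then produces, component-wise, integrals of the shape covered by Lemma~\ref{L1}, giving the compensated bound
\[
\Big|\int_{B_r}\langle \skew(\DR m \dott S(\DR m, R))R,\,G\eta\rangle\,{\rm{dx}}\Big| \le C\,\|S(\DR m, R)\|_{L^2(B_r)}\,\|\DR \eta\|_{L^2(B_r)}\,\|\DR m\|_{M^{3/2, 1/2}(B_{2r})}.
\]
This is the Hardy--BMO--duality compensation that absorbs the otherwise lethal $L^1$-only integrability. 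Parallel to this, equation (\ref{EL1}) is a linear elliptic system $\Div\big(A(R)\DR m\big) = \Div\big(A(R)(\id_2|0)\big)$ with bounded, uniformly elliptic coefficient $A(R) := 2\pj(R\Pj^2 R^T)$, and another application of Lemma~\ref{L3} supplies a Morrey-type decay of the form $\int_{B_{\theta r}}|\DR m|^2 \le C\theta^2 \int_{B_r}|\DR m|^2 + C\int_{B_r}|\DR R|^2 + \text{l.o.t.}$

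Combining the two, I set up a simultaneous iteration for $\phi_m(r) := \int_{B_r}|\DR m|^2$ and $\phi_R(r) := \int_{B_r}|\DR R|^2$, proving an estimate of the form $\phi_m(\theta r) + \phi_R(\theta r) \le \tfrac12\big(\phi_m(r) + \phi_R(r)\big) + \text{l.o.t.}$ for some $\theta \in (0,1)$ depending on the smallness of the initial energy. Iteration then yields $\phi(\theta^k r_0) \le C\theta^{2\beta k}$, i.e.\ $\DR m, \DR R \in M^{2, 2\beta}$ locally, and the desired Hölder continuity follows. The main obstacle is precisely the interlocked nature of the two estimates: the Morrey bound on $\DR R$ produced through Lemma~\ref{L1} requires a Morrey bound on $\DR m$ as input, while that bound on $\DR m$ via (\ref{EL1}) depends on the regularity of the coefficient $R$. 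Threading both iterations in lock-step, and keeping the smallness constants compatible with the Rivière--Uhlenbeck threshold throughout, is where the technical delicacy of the proof lies.
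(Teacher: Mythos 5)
Your overall architecture is exactly that of the paper: gauge transformation via Lemma~\ref{L2}, Hodge decomposition via Lemma~\ref{L3}, the Coifman--Lions--Meyer--Semmes/Schikorra compensation estimate of Lemma~\ref{L1} for both the $\Omega_R\cdot\DR R$ term and the coupling term $\skew(\DR m\dott S(\DR m,R))R$ (exploiting that $S$ is divergence-free by (\ref{EL1})), a simultaneous decay iteration for $m$ and $R$, and Morrey's Dirichlet growth criterion at the end. That part is right.

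There is, however, a genuine gap in the choice of the quantity you iterate. You set up the decay for the $L^2$-energy $\phi(r)=\int_{B_r}(|\DR m|^2+|\DR R|^2)$ and claim $\phi_m(\theta r)+\phi_R(\theta r)\le\tfrac12(\phi_m(r)+\phi_R(r))+\text{l.o.t.}$ To extract such an $L^2$ bound on the Hodge potentials $\DR f$, $\DT g$ from Lemma~\ref{L1} you must run the duality argument against test functions $\varphi$ with $\|\DR\varphi\|_{L^2}\le1$; but Lemma~\ref{L1} requires the factor $b$ (which contains $\varphi$) to lie in $W^{1,2}_0\cap L^\infty$ with a \emph{quantitative} $L^\infty$ bound, and in two dimensions $W^{1,2}\not\hookrightarrow L^\infty$, so this class of test functions is inadmissible and no $L^2$ bound on $\DR f$ comes out. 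Your own displayed compensated estimate already betrays the mismatch: its right-hand side carries $\|\DR m\|_{M^{3/2,1/2}(B_{2r})}$, which cannot be closed against the $L^2$-quantity $\phi$. This is precisely why the paper tests against $\varphi\in W^{1,3}_0$ (so that $\|\varphi\|_{L^\infty}\le Cr^{1/3}\|\DR\varphi\|_{L^3}$), obtains only $L^{3/2}$ bounds on $\DR f$ and $\DT g$ by $L^{3/2}$--$L^3$ duality, and consequently iterates the scale-invariant quantity $r^{-1/3}(\|\DR R\|_{L^{3/2}(B_r)}+\|\DR m\|_{L^{3/2}(B_r)})$, i.e.\ works in $M^{3/2,1/2}$ throughout, improving the Morrey exponent to $\tfrac12+\tfrac32\beta$ and then applying Lemma~\ref{Lem:Mor} with $p=\tfrac32$. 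Your scheme becomes correct once you replace the $L^2$-energy by this $M^{3/2,1/2}$ quantity (or use a Lorentz-space duality $L^{2,1}$--$L^{2,\infty}$, which is a genuinely different and more delicate repair); as written, the iteration inequality does not follow from the lemmas you invoke.
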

          
        {\bf Proof. }
        We write $B_\rho$ for any ball $B_\rho(x_0)\subset\os$. We assume $r$ to be small enough such that $B_{2r}(x_0)\subset\os$. We will collect more smallness conditions on $r$ during the proof.

	We choose $G$ according to Lemma \ref{L2} and find, abbreviating
	$\Omega^G:=G^{-1}\Omega_R G-G^{-1}\DR G$,
	\begin{align}
	\nonumber\DivR(G^{-1}\DR R)&=\DR(G^{-1})\cdot\DR R+G^{-1}\Delta R\\
	&=-G^{-1}(\DR G)G^{-1}\cdot\DR R+G^{-1}\Omega\cdot\DR R
	+G^{-1}\skew\big(\DR m\dott S(\DR m,R)\big)R\\
	\nonumber&=\Omega^G\cdot G^{-1}\DR R+G^{-1}\skew\big(\DR m\dott S(\DR m,R)\big)R\,.
	\end{align}
	Now we Hodge-decompose $G^{-1}\DR R$ according to Lemma \ref{L3}. We find
	$f\in W^{1,2}(B_r,\R^{3\times 3})$,
	$g\in W^{1,2}_0(B_r,\R^{3\times3})$ with $dg=0$, and a component-wise harmonic
	$1$-form $h\in C^\infty(B_r,L(\R^2,\R^{3\times3}))$
	such that
	\begin{align}\label{HD1}
	G^{-1}\DR R=\DR f+\DT g+h
	\end{align}
	almost everywhere in $B_r$. Using the well-known relations $\DivR\,\DR=\DivT\DT=\Delta$ and $\DivR\,\DT=\DivT\DR=0$, we calculate 
	\begin{align}\label{Del1}
	\Delta f=\DivR\,\DR f=\DivR(G^{-1}\DR R)=\Omega^G\cdot G^{-1}\DR R
	+G^{-1}\skew\big(\DR m\dott S(\DR m,R)\big)R\,,
	\end{align}
	and
	\begin{align}\label{Del2}
	\Delta g=\DivT\DT g=\DivT(G^{-1}\DR R)=\DT G^{-1}\cdot\DR R=\DivR((\DT G^{-1})(R-R_0))\,,
	\end{align}
	for any constant $R_0\in\R^{3\times 3}$ (not necessarily a rotation).
        Both terms on the right-hand side, multiplied with some $\varphi\in W^{1,3}_0(B_r,\R^{3\times3})$, can be estimated using Lemma \ref{L1}. Choosing
        $a:=R^{k\ell}$, $b:=(G^{-1})^{jk}\varphi^{i\ell}$, $\Gamma^s:=(\Omega^G)^{ij}_s$,
        we find
        \begin{align}\label{rhs1}
        \int_{B_r}\iprod{\Omega^G\cdot G^{-1}\DR R,\varphi}\,{\rm{dx}}
        \le C\|\Omega^G\|_{L^2(B_2)}(\|\DR G\|_{L^2(B_r)}\|\varphi\|_{L^\infty(B_r)}+\|\DR\varphi\|_{L^2(B_r)})\|\DR R\|_{M^{3/2,1/2}(B_r)},
        \end{align}
        and choosing $a:=m^j$, $b:=(G^{-1}_{ij})R^{k\ell}\varphi^{i\ell}$,
        $\Gamma^s:=S(\D m,R)^k_s$, we have
        \begin{align}\label{rhs2}
        &\int_{B_r}\iprod{(G^{-1}\skew\big(\D m\dott S(\D m,R)\big)R),\varphi}\,{\rm{dx}}\\
        \nonumber&\qquad\le C\|S(\D m,R)\|_{L^2(B_r)}\Big(\|\DR G\|_{L^2(B_r)}\|\varphi\|_{L^\infty(B_r)}\|\DR R\|_{L^2(B_r)}\|\varphi\|_{L^\infty(B_r)}+\|\DR\varphi\|_{L^2(B_r)}\Big)\|\D m\|_{M^{3/2,1/2}(B_r)}.
        \end{align}
	We assume $\eps\in(0,\eps_0)$ with some $\eps_0>0$ to be determined. Choosing $r>0$ small enough, we may assume $\|\Omega_R\|_{L^2(B_r)}\le\eps$ and $\|\DR m\|_{L^2(B_r)}\le\eps$.

        We let $T:=\{\varphi\in C^\infty_0(B_r,\R^{3\times3})\,|\,\|\DR\varphi\|_{L^3(B_r)}\le1\}$. Combining the duality of $L^{3/2}$ and $L^3$ and (\ref{Del1}) with (\ref{rhs1}) and (\ref{rhs2}), we find, using $\varphi=0$ on $\partial B_r$,
	\begin{align}\label{HA1}
	\nonumber \|\DR f\|_{L^{3/2}(B_r)}
		&\leq C\sup_{\varphi\in T}\int_{B_r}\<\DR f,\DR\varphi\>\,{\rm{dx}}\\
	\nonumber &=C\sup_{\varphi\in T}\int_{B_r}\<\Omega^G\cdot G^{-1}\DR R
	+G^{-1}\skew\big(\DR m\dott S(\DR m,R)\big)R\,,\,\varphi\>\,{\rm{dx}}\\
	\nonumber&\leq C\sup_{\varphi\in T}\|\Omega^G\|_{L^2(B_r)}(\|\DR G\|_{L^2}\|\varphi\|_{L^\infty(B_r)}
	+\|\DR\varphi\|_{L^2(B_r)})\|\DR R\|_{M^{3/2,1/2}(B_{2r})}\\
	\nonumber&\quad\quad+C\sup_{\varphi\in T}\|S(Dm,R)\|_{L^2(B_r)}\Big(\|\DR G\|_{L^2(B_r)}\|\varphi\|_{L^\infty(B_r)}
	+\|\DR R\|_{L^2(B_r)}\|\varphi\|_{L^\infty(B_r)}\\
	&\quad\qquad+\|\DR\varphi\|_{L^2(B_r)}\Big)\|\DR m\|_{M^{3/2,1/2}(B_{2r})}\\
	\nonumber&\leq C\sup_{\varphi\in T}\|\Omega_R\|_{L^2(B_r)}(\|\Omega_R\|_{L^2(B_r)}\|\varphi\|_{L^\infty(B_r)}
	+\|\DR\varphi\|_{L^2(B_r)})\|\DR R\|_{M^{3/2,1/2}(B_{2r})}\\
	\nonumber&\quad\quad+C\sup_{\varphi\in T}(\|\DR m\|_{L^2(B_r)}+r)\Big(\|\Omega_R\|_{L^2(B_r)}\|\varphi\|_{L^\infty(B_r)}
	+\|\DR R\|_{L^2(B_r)}\|\varphi\|_{L^\infty(B_r)}\\
	\nonumber&\qquad\quad+\|\DR\varphi\|_{L^2(B_r)}\Big)
	\|\DR m\|_{M^{3/2,1/2}(B_{2r})}\\
	\nonumber&\leq C(\eps+r)\,r^{1/3}(\|\DR R\|_{M^{3/2,1/2}(B_{2r})}+\|\DR m\|_{M^{3/2,1/2}(B_{2r})}).
	\end{align}
	Here, in the second ``$\le$'', we have used Lemma \ref{L1}. And in the fourth
	``$\le$'', we have used
	$\|\varphi\|_{L^\infty(B_r)}\le Cr^{1/3}\|\DR\varphi\|_{L^3(B_r)}\le Cr^{1/3}$,
	$\|\DR\varphi\|_{L^2(B_r)}\le Cr^{1/3}\|\DR\varphi\|_{L^3(B_r)}\le Cr^{1/3}$,
	$\|\Omega_R\|_{L^2(B_r)}\le\eps$, and $\|\DR m\|_{L^2(B_r)}\le\eps$.
	
	Using (\ref{Del2}), we can also estimate the $L^{3/2}$-norm of $\DT g$. We find
	\begin{align}\label{HA2}
	\nonumber\|\DT g\|_{L^{3/2}(B_r)}&\leq C\sup_{\varphi\in T}\int_{B_r}\<\DT g,\DT\varphi\>
		\,{\rm{dx}}\\
	\nonumber&=C\sup_{\varphi\in T}\int_{B_r}\<\Delta g,\varphi\>\,{\rm{dx}}\\
	&=C\sup_{\varphi\in T}\int_{B_r}\<\DivR((\DT G^{-1})(R-R_{B_r}))\,,\,\varphi\>\,{\rm{dx}}\\
	\nonumber&=C\sup_{\varphi\in T}\int_{B_r}\<(\DT G^{-1})(R-R_{B_r})\,,\,\DR\varphi\>\,{\rm{dx}}\\
	\nonumber&\leq C\sup_{\varphi\in T}\|\DR\varphi\|_{L^3(B_r)}\|\DR G\|_{L^2(B_r)}
	\|R-R_{B_r}\|_{L^6(B_r)}\\
	\nonumber&\leq C\eps r^{1/3}\|\DR R\|_{M^{3/2,1/2}(B_{2r})}.
	\end{align}
	This time, we have used $\|\DR G\|_{L^2(B_r)}\le3\,\|\Omega_R\|_{L^2(B_r)}\le3\,\eps$,
	and the Sobolev embedding $W^{1,2/3}\hookrightarrow L^6$ for $R$.
	
	For $h$, being harmonic, we have the standard estimate
	\begin{align}
	\int_{B_\rho}|h|^{3/2}\,{\rm{dx}}
	\le C\Big(\frac{\rho}{r}\Big)^2\int_{B_r}|h|^{3/2}\,{\rm{dx}}\,,
	\end{align}
	for any $0<\rho<r$. From (\ref{HD1}), and then (\ref{HA1}) and (\ref{HA2}),
	we hence infer
	\begin{align}\label{It1}
	\nonumber\|\DR R\|_{L^{3/2}(B_\rho)}&=\|G^{-1}DR\|_{L^{3/2}(B_\rho)}\\
        \nonumber&\le\|h\|_{L^{3/2}(B_\rho)}+\|\DR f\|_{L^{3/2}(B_\rho)}+\|\DT g\|_{L^{3/2}(B_\rho)}\\
        &\le C\Big(\frac{\rho}{r}\Big)^{4/3}\|h\|_{L^{3/2}(B_r)}
          +\|\DR f\|_{L^{3/2}(B_\rho)}+\|\DT g\|_{L^{3/2}(B_\rho)}\\
        \nonumber&\le C\Big(\frac{\rho}{r}\Big)^{4/3}\|\DR R\|_{L^{3/2}(B_r)}
          +C(\|\DR f\|_{L^{3/2}(B_r)}+\|\DT g\|_{L^{3/2}(B_r)})\\
        \nonumber&\le C\Big(\frac{\rho}{r}\Big)^{4/3}\|\DR R\|_{L^{3/2}(B_r)}
          +C(\eps+r)r^{1/3}(\|\DR R\|_{M^{3/2,1/2}(B_{2r})}+\|\DR m\|_{M^{3/2,1/2}(B_{2r})}).
        \end{align}           
	
	Now we are going to derive a similar estimate for $\|\DR m\|_{L^{3/2}}$.
	Hodge-decompose $S(\DR m,R)$, i.e.
	\begin{align}
	\pj(\zwei \,R\,\Pj^2(R^T(\DR m|0)-(\id_2|0)))=\DT{\alpha}+\chi\,,
	\end{align}
	with ${\alpha}\in W^{1,2}_0(B_r,\R^{3\times2})$, and
	$\chi\in W^{1,2}(B_r,L(\R^2,\R^{3\times 2}))$ harmonic. This time, there is no term of the form $\DR\zeta$, since $\DivR$ of the left-hand side is $0$. This would imply that $\zeta$ is harmonic, and so would be $\DR\zeta$, which hence can be absorbed into $\chi$. We have, abbreviating $\Pj_R$ for the linear mapping
	$\xi\mapsto \zwei \, R\,\Pj^2(R^T(\xi))$,
	\begin{align}\label{dmstart}
	\nonumber\Delta\alpha&=\DivT\DT\alpha\\
	\nonumber&=\DivT[\pj(\zwei \,R\,\Pj^2(R^T(\DR m|0)-(\id_2|0)))]\\
	&=\DivT[\pj(\Pj_R(\DR m|0)-\zwei \,R\,\Pj^2(\id_2|0))]\\
        \nonumber&=(\DT|0)\cdot[\Pj_R(\DR m|0)]-\DivT[\pj(\zwei \.R\,\Pj^2(\id_2|0))]\\
	\nonumber&=\DT\Pj_R\cdot\DR m-\DivT[\pj(\zwei \.R\,\Pj^2(\id_2|0))]\\
        \nonumber&=\DivR[(\DT\Pj_R)(m-m_{B_r})]-\DivT[\pj(\zwei \.(R-R_{B_r})\Pj^2(\id_2|0))]\,.
	\end{align}    
	Using the same ideas as before, and defining $U:=\{\psi\in C^\infty_0(B_r,\R^{3\times2})\;\big|\;
	\|\DT\psi\|_{L^3(B_r)}\le1\}$, we estimate 
	\begin{align}\label{sechs27}
	\nonumber\|\DT\alpha\|_{L^{3/2}(B_r)}&\leq C\sup_{\psi\in U}\int_{B_r}\<\DT\alpha,\DT\psi\>
	  \,{\rm{dx}}\\
        \nonumber&=C\sup_{\psi\in U}\int_{B_r}\Big(\<(\DT\Pj_R)(m-m_{B_r})\,,\,\DR\psi\>
          -\<\pj(\zwei \.(R-R_{B_r})\Pj^2(\id_2|0))\,,\,\DT\psi\>\Big)\,{\rm{dx}}\\  
	&\leq C\sup_{\psi\in U}(\|\DR\psi\|_{L^3(B_r)}\|\DR\Pj_R\|_{L^2(B_r)}\|m-m_{B_r}\|_{L^6(B_r)}
	+\|\DR\psi\|_{L^3(B_r)}\|R-R_{B_r}\|_{L^{3/2}(B_r)})\\
	\nonumber&\leq C\sup_{\psi\in U}(\|\DR\psi\|_{L^3(B_r)}\|\DR R\|_{L^2(B_r)}\|\DR m\|_{L^{3/2}(B_r)}
	+r\|\DR\psi\|_{L^3(B_r)}\|\DR R\|_{L^{3/2}(B_r)})\\
	\nonumber&\leq C(\eps+r)\,r^{1/3}(\|\DR R\|_{M^{3/2,1/2}(B_{2r})}+\|\DR m\|_{M^{3/2,1/2}(B_{2r})}).
	\end{align}
	Proceeding exactly as above, we find
	\begin{align}\label{It2}
	\nonumber\|\DR m\|_{L^{3/2}(B_\rho)}&\leq C(\|\chi\|_{L^{3/2}(B_\rho)}
          +\|\DT\alpha\|_{L^{3/2}(B_\rho)}+\rho^{4/3})\\
        \nonumber&\leq C\Big(\frac{\rho}{r}\Big)^{4/3}\|\chi\|_{L^{3/2}(B_r)}
          +C(\|\DT\alpha\|_{L^{3/2}(B_\rho)}+\rho^{4/3})\\
        &\leq C\Big(\frac{\rho}{r}\Big)^{4/3}\|\DR m\|_{L^{3/2}(B_r)}
          +C(\|\DT\alpha\|_{L^{3/2}(B_r)}+\rho^{4/3})\\
        \nonumber&\leq C\Big(\frac{\rho}{r}\Big)^{4/3}\|\DR m\|_{L^{3/2}(B_r)}
          +C(\eps+r)r^{1/3}(\|\DR R\|_{M^{3/2,1/2}(B_{2r})}+\|\DR m\|_{M^{3/2,1/2}(B_{2r})})
          +C\rho^{4/3}.
        \end{align}        
	In order to do so, we have used
	\begin{align}\label{u+l}
	 C^{-1}\|\DR m\|_{L^{3/2}(B_s)}-Cs^{4/3}
	\leq \|\pi_{12}(\zwei \,R\,\Pj^2(R^T(\DR m|0)-(\id_2|0)))\|_{L^{3/2}(B_s)}\leq C(\|\DR m\|_{L^{3/2}(B_s)}+s^{4/3}).
	\end{align}
	We divide (\ref{It1}) and (\ref{It2}) by $\rho^{1/3}$ and combine them into
	\begin{align}
	\nonumber\rho^{-1/3}(\|\DR R\|_{L^{3/2}(B_\rho)}&+\|\DR m\|_{L^{3/2}(B_\rho)})\\
	\nonumber&\leq
	C\,\frac{\rho}{r^{4/3}}\,(\|\DR R\|_{L^{3/2}(B_r)}+\|\DR m\|_{L^{3/2}(B_r)})\\
	&\quad\quad+C(\eps+r)\Big(\frac{r}{\rho}\Big)^{1/3}
	(\|\DR R\|_{M^{3/2,1/2}(B_{2r})}+\|\DR m\|_{M^{3/2,1/2}(B_{2r})})+C\rho\\
        \nonumber&\le C\Big(\frac{\rho}r+(\eps+r)\Big(\frac{r}{\rho}\Big)^{1/3}\Big)
        (\|\DR R\|_{M^{3/2,1/2}(B_{2r})}+\|\DR m\|_{M^{3/2,1/2}(B_{2r})})+C\rho.
	\end{align}
        We now assume $r\le\eps$, where $\eps>0$ is yet to be determined.
        For formal reasons, we also add $\rho$ on both sides, which gives
        \begin{align}
	\nonumber\rho^{-1/3}(\|\DR R\|_{L^{3/2}(B_\rho)}&+\|\DR m\|_{L^{3/2}(B_\rho)})
        +\rho\\
	&\le C_0\Big(\frac{\rho}r+(\eps+r)\Big(\frac{r}{\rho}\Big)^{1/3}\Big)
        (\|\DR R\|_{M^{3/2,1/2}(B_{2r})}+\|\DR m\|_{M^{3/2,1/2}(B_{2r})}+2r)
        \end{align} 
        for some suitable constant $C_0$.
        Now we fix $\rho:=\frac{r}{12C_0}$ and $\eps:=(12C_0)^{-4/3}$, making
        $C_0(\frac{\rho}r+(\eps+r)(\frac{r}{\rho})^{1/3})=\frac16$. Abbreviating
        $\theta:=\frac1{12C_0}$, we thus have
        \begin{align}\nonumber
          (\theta r)^{-1/3}(\|\DR R\|_{L^{3/2}(B_{\theta r})}
            +\|\DR m\|_{L^{3/2}(B_{\theta r})})+\theta r  
          \le\frac16\,(\|\DR R\|_{M^{3/2,1/2}(B_{2r})}+\|\DR m\|_{M^{3/2,1/2}(B_{2r})}+2r).
        \end{align}
        This holds for all $B_{\theta r}(x_0)$ and $B_{2r}(x_0)\subset\omega$ which
        share the same center $x_0$. But clearly, we can replace $B_{2r}(x_0)$
        with any ball $B_s(y_0)\supset B_{2r}(x_0)$ which is still in $\omega$.
        All smallness assumptions made so far for $B_r(X_0)$ will now also
        be assumed for $s$, that is $s\le\eps$,
        $\|\Omega_R\|_{L^2(B_s(y_0))}\le\eps$, and $\|\DR m\|_{L^2(B_s(y_0))}\le\eps$.
        We then have
        \begin{align}\nonumber
          (\theta r)^{-1/3}(\|\DR R\|_{L^{3/2}(B_{\theta r}(x_0))}
            +\|\DR m\|_{L^{3/2}(B_{\theta r}(x_0))})+\theta r  
          \le\frac16\,(\|\DR R\|_{M^{3/2,1/2}(B_s(y_0))}+\|\DR m\|_{M^{3/2,1/2}(B_s(y_0))}+s),
        \end{align}
        which is valid for all $r,s,x_0,y_0$ such that
        $B_{2r}(x_0)\subset B_s(y_0)\subset\omega$. Then the $B_{\theta\rho}(x_0)$
        cover all of $B_{\theta s/2}(y_0)$. Hence, on the left-hand-side, we can
        take the infimum over all feasible $r$ and $x_0$, and find
        \begin{align}\nonumber
          &\|\DR R\|_{M^{3/2,1/2}(B_{\theta s/2}(y_0))}
            +\|\DR m\|_{M^{3/2,1/2}(B_{\theta s/2}(y_0))}+\frac{\theta s}{2}\\  
         &\qquad\le\frac12\,(\|\DR R\|_{M^{3/2,1/2}(B_s(y_0))}+\|\DR m\|_{M^{3/2,1/2}(B_s(y_0))}+s).
        \end{align}
        We may replace $s$ by $\frac{\theta}2\,s$ and iterate this, finding
        \begin{align}\nonumber
          &\|\DR R\|_{M^{3/2,1/2}(B_{(\theta/2)^ks}(y_0))}
            +\|\DR m\|_{M^{3/2,1/2}(B_{(\theta/2)^ks}(y_0))}\\  
         &\qquad\le 2^{-k}\,(\|\DR R\|_{M^{3/2,1/2}(B_s(y_0))}+\|\DR m\|_{M^{3/2,1/2}(B_s(y_0))}+s)
        \end{align}
        for all $k\in\N$. Now, for $r\approx(\theta/2)^ks$, we have
        $k\approx\frac{\log{r/s}}{\log(\theta/2)}$, and therefore
        $2^{-k}\approx (r/s)^{\frac{\log2}{\log(2/\theta)}}=:(r/s)^\beta$.
        Hence we have proven that, for all $r\le s$, the estimate
        \begin{align}\nonumber
          &\|\DR R\|_{M^{3/2,1/2}(B_r(y_0))}+\|\DR m\|_{M^{3/2,1/2}(B_r(y_0))} 
          \le Cr^\beta\,(\|\DR R\|_{M^{3/2,1/2}(B_s(y_0))}+\|\DR m\|_{M^{3/2,1/2}(B_s(y_0))}+s)
        \end{align}  
        holds. For $x_0\in B_{s/2}(y_0)$ and $r\le s/2$, we can apply the same
        with $B_s(y_0)$ replaced by $B_{2r}(y_0)\subset B_s(y_0)$, and hence find
        \begin{align}\nonumber
          &\|\DR R\|_{M^{3/2,1/2}(B_r(x_0))}+\|\DR m\|_{M^{3/2,1/2}(B_r(x_0))} 
          \le Cr^\beta\,(\|\DR R\|_{M^{3/2,1/2}(B_s(y_0))}+\|\DR m\|_{M^{3/2,1/2}(B_s(y_0))}+s),
        \end{align} 
        which implies
        \begin{align}\nonumber
           &\|\DR R\|_{M^{3/2,1/2+3\beta/2}(B_{s/2}(y_0))}+\|\DR m\|_{M^{3/2,1/2+3\beta/2}(B_{s/2}(y_0))} 
          \le C\,(\|\DR R\|_{M^{3/2,1/2}(B_s(y_0))}+\|\DR m\|_{M^{3/2,1/2}(B_s(y_0))}+s).
        \end{align}        
        This means
        \begin{align}\label{Mor}
           \DR R,\;\DR m\in M^{3/2,1/2+3\beta/2}_{\text{\rm loc}}(\omega).
        \end{align}  
        We now use the following well-known fact, which can be found in \cite[Theorem 5.7]{giaquinta2013introduction}, for example.

        \begin{lemma}[Morrey' Dirichlet growth criterion]\label{Lem:Mor}
          Assume $U\subset\R^n$ to be open, $u\in W^{1,p}_{\text{\rm loc}}(U)$,
          $Du\in M^{p,n-p+\eps}_{\text{\rm loc}}(U)$ for some $\eps>0$. Then $u\in C^{0,\eps/p}$.
        \end{lemma}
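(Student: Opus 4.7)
The plan is to build a H\"older continuous representative of $u$ by controlling ball averages $(u)_{x,r} := |B_r(x)|^{-1}\int_{B_r(x)} u\, dy$ as $r$ decreases and between nearby points, with decay rate $r^{\eps/p}$. Fix an open set $V$ compactly contained in $U$; on $V$, the Morrey hypothesis gives $\int_{B_r(x)} |Du|^p \le C r^{n-p+\eps}$ uniformly for $x \in V$ and $r$ small. Combining with the Poincar\'e inequality
$$
\int_{B_r(x)} |u - (u)_{x,r}|^p \, dy \le C r^p \int_{B_r(x)} |Du|^p \, dy \le C r^{n+\eps},
$$
and dividing by $|B_r(x)| \sim r^n$, H\"older's inequality yields the $L^1$-oscillation bound $|B_r(x)|^{-1}\int_{B_r(x)} |u - (u)_{x,r}|\, dy \le C r^{\eps/p}$.

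Next I would telescope the averages. The inclusion $B_{r/2}(x) \subset B_r(x)$ gives
$$
|(u)_{x,r} - (u)_{x,r/2}| \le |B_{r/2}(x)|^{-1}\int_{B_{r/2}(x)} |u - (u)_{x,r}|\, dy \le 2^n |B_r(x)|^{-1}\int_{B_r(x)} |u - (u)_{x,r}|\, dy \le C r^{\eps/p},
$$
and summing the geometric series over $r_k := 2^{-k} r_0$ shows that $((u)_{x,r})_{r>0}$ is Cauchy as $r \to 0^+$. Hence the representative $\tilde u(x) := \lim_{r \to 0^+} (u)_{x,r}$ exists pointwise on $V$, coincides with $u$ almost everywhere by Lebesgue differentiation, and satisfies $|\tilde u(x) - (u)_{x,r}| \le C r^{\eps/p}$ uniformly on $V$.

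Finally, for $x, y \in V$ with $r := |x-y|$ small, both $B_r(x)$ and $B_r(y)$ sit inside $B_{2r}(x)$, so the same Poincar\'e--Morrey estimate on $B_{2r}(x)$ combined with the same volume-inclusion trick yields $|(u)_{x,r} - (u)_{y,r}| \le C r^{\eps/p}$. A triangle inequality then gives $|\tilde u(x) - \tilde u(y)| \le C|x-y|^{\eps/p}$, as claimed. The argument is entirely classical; the only point to appreciate is that Poincar\'e's inequality contributes exactly the factor $r$ which, together with the Morrey decay of $\int |Du|^p$ and the volume normalization $r^{-n}$ for the average, delivers the precise geometric rate required to telescope to the H\"older exponent $\eps/p$. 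There is no real obstacle beyond this interplay of scalings.
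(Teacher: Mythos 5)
Your proof is correct and complete; the paper itself does not prove this lemma but simply cites it as \cite[Theorem 5.7]{giaquinta2013introduction}, and your argument — Poincar\'e plus the Morrey decay to get Campanato-type oscillation decay $r^{\eps/p}$, then telescoping of ball averages to produce a H\"older continuous representative — is precisely the standard proof behind that citation. The only cosmetic points worth noting are that the conclusion is local ($u\in C^{0,\eps/p}_{\text{\rm loc}}(U)$, which is all the paper uses) and that the exponent $\eps/p$ is a genuine H\"older exponent only when $\eps\le p$, as holds in the application.
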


        With $p=\frac32$, $n=2$, the last estimate (\ref{Mor}) and Lemma
        \ref{Lem:Mor} imply
        $R,m\in C^{0,\beta}_{\text{\rm loc}}(\omega)$, which is the H\"older regularity
        asserted in Proposition \ref{Prop:Hoel}.\qed
        
        \begin{remark}
          It is essential that we are working in the critical dimension
          $n=2$ here, even though this may not be too obvious in the preceding
          proof which uses methods developed for supercritical dimensions.
          But the arithmetic of the exponents crucially uses $n=2$.
          In particuar, Lemma \ref{L1} for $n>2$ is only available with
          exponents
          adding up to $n$ instead of $(\frac32,\frac12)$. But we would not
          succeed in finding similarly good estimates in the corresponding
          Morrey spaces.
        \end{remark}
        
	\subsection{Higher regularity}\label{suse:high}

        In this subsection, we are going to complete the proof of Theorem \ref{MainTh}.
        
        {\bf Proof. }
	Remember we have the equations
	\begin{align}\label{Eul-Lag}
	\Div S(\DR m,R)&=0,\\
	\nonumber\Delta R-\Omega_R\cdot\DR R-\skew\big(\DR m\dott S(\DR m,R)\big)R&=0,  
	\end{align}
	where for $\xi\in \R^{3\times 2}$ we have defined
	\begin{align}
	S(\xi,R)=\pj(\zwei \,R\,\Pj^2(R^T(\xi,0)-(\id_2|0)))=\pj(\zwei \,R\,\Pj^T\Pj(R^T(\xi|R_3)-\id_3))\,,
	\end{align}
	and 
	\[
	|\Omega_R|\le C\,|\DR R|.
	\]
	Abbreviating $L_R(\xi):=\pj(\zwei \,R\,\Pj^2(R^T(\xi,0)))$, we rewrite the first
	equation (\ref{Eul-Lag}) as
	\begin{align}\label{Eq-m}
	\Div L_R(\D m)=\Div(\pj(\zwei \,R\,\Pj^2(R^T(\id_2|0)))).
	\end{align}
	For every $R\in \SO(3)$, $L_R:\R^{3\times2}\to\R^{3\times2}$ is a linear mapping
	satisfying the Legendre condition (uniform positivity) because of
	\begin{align}
	\nonumber\iprod{L_R(\xi),\xi}&=\iprod{\pj(\zwei \,R\,\Pj^2(R^T(\xi,0))),\xi}=\iprod{\zwei \,R\,\Pj^2(R^T(\xi,0)),(\xi,0)}\\
	&=\iprod{\zwei \,\Pj(R^T(\xi,0)),\,\Pj(R^T(\xi,0))}\geq \zwei \,\widehat{\lambda}\,|R^T(\xi,0)|^2=\zwei \,\widehat{\lambda}\,|\xi|^2,
	\end{align}
	where here $\widehat{\lambda}:=\min\{\mu,\mu_c,\kappa\}$ is independent of $R$, hence we have
	a uniformly elliptic operator $m\mapsto\Div L_R(\D m)$. For this operator,
	classical Schauder theory applies once it depends H\"older continuously on
	$x$ through $R(x)$. And it does, because we already know $R\in C_{\text{\rm loc}}^{0,\beta}$
	for some $\beta>0$.
	
	We use the following version of Schauder theory. The proof is well known, a good
	reference is \cite[Theorem 5.19]{giaquinta2013introduction} which reads as follows.
	
	\begin{lemma}\label{L5}
		Let $u\in W^{1,2}_{\text{\rm loc}}(\Ob,\R^m)$ be a solution to
		\begin{align*} 
		  \Div (A(x)\cdot \D u)&=-\Div \mathcal{F}\,,
		\end{align*}
                with $A$ satisfying the Legendre-Hadamard condition and having
		its components $A^{{\alpha}\beta}_{ij}$ in $C^{0,\sigma}_{\text{\rm loc}}(\Ob)$ for some $\sigma\in(0,1)$. If $\mathcal{F}^{\alpha}_i
		\in C^{0,\sigma}_{\text{\rm loc}}(\Ob)$, then also  $\DR u$ is of class
                $C^{0,\sigma}_{\text{\rm loc}}(\Ob)$.
	\end{lemma}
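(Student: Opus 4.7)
I plan to use the classical Campanato--Schauder freezing-of-coefficients scheme. Fix a ball $B_r(x_0)\Subset\Ob$ and rewrite the equation as
\begin{align*}
\Div(A(x_0)\cdot\DR u)=\Div\bigl((A(x_0)-A(x))\cdot\DR u-\mathcal{F}\bigr),
\end{align*}
so that the constant-coefficient operator $\Div(A(x_0)\,\cdot\,)$ appears on the left. Let $v\in u+W^{1,2}_0(B_r(x_0),\R^m)$ solve the frozen homogeneous system $\Div(A(x_0)\cdot \DR v)=0$ in $B_r(x_0)$, and set $w:=u-v$. The strategy is to compare the Campanato integrals of $\DR u$, $\DR v$, and $\DR w$, and then iterate.

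The first ingredient is the classical decay estimate for constant-coefficient Legendre--Hadamard systems: for $0<\rho\le r$,
\begin{align*}
\int_{B_\rho(x_0)}|\DR v-(\DR v)_{B_\rho}|^2\,{\rm{dx}}\le C\Bigl(\frac{\rho}{r}\Bigr)^{n+2}\int_{B_r(x_0)}|\DR v-(\DR v)_{B_r}|^2\,{\rm{dx}}.
\end{align*}
The second ingredient comes from testing the equation for $w$ against $w\in W^{1,2}_0(B_r(x_0))$, using Legendre--Hadamard ellipticity together with the H\"older moduli $|A(x_0)-A(x)|\le [A]_{C^{0,\sigma}}r^\sigma$ and $|\mathcal{F}(x)-\mathcal{F}(x_0)|\le [\mathcal{F}]_{C^{0,\sigma}}r^\sigma$ (noting that $\mathcal{F}(x_0)$ is constant and disappears under the divergence). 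A Young's-inequality absorption then yields
\begin{align*}
\int_{B_r(x_0)}|\DR w|^2\,{\rm{dx}}\le Cr^{2\sigma}\int_{B_r(x_0)}|\DR u|^2\,{\rm{dx}}+Cr^{n+2\sigma}.
\end{align*}
Combining the two via $\DR u=\DR v+\DR w$ and the minimality of the mean in $L^2$ produces the hybrid Campanato decay
\begin{align*}
\int_{B_\rho(x_0)}|\DR u-(\DR u)_{B_\rho}|^2\,{\rm{dx}}\le C\Bigl(\frac{\rho}{r}\Bigr)^{n+2}\!\!\int_{B_r(x_0)}|\DR u-(\DR u)_{B_r}|^2\,{\rm{dx}}+Cr^{2\sigma}\!\!\int_{B_r(x_0)}|\DR u|^2\,{\rm{dx}}+Cr^{n+2\sigma}.
\end{align*}

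Feeding this inequality into a standard iteration lemma (for instance \cite[Lemma~5.13]{giaquinta2013introduction}) produces $\int_{B_\rho(x_0)}|\DR u-(\DR u)_{B_\rho}|^2\,{\rm{dx}}\le C\rho^{n+2\sigma}$ for all sufficiently small $\rho$. Campanato's characterization $\mathcal{L}^{2,n+2\sigma}\cong C^{0,\sigma}$ then delivers $\DR u\in C^{0,\sigma}_{\text{\rm loc}}(\Ob)$, which is the desired conclusion.

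\textbf{Main obstacle.} The trouble is with the freezing error $Cr^{2\sigma}\int_{B_r(x_0)}|\DR u|^2\,{\rm{dx}}$: a priori the integrand is only bounded in $L^1$ on each ball (not decaying like $r^n$), so this contribution is too crude to close the iteration directly at the sharp exponent $n+2\sigma$. The standard remedy is a two-stage bootstrap: one first establishes a preliminary Morrey estimate $\DR u\in M^{2,n-\delta}_{\text{\rm loc}}$ for some small $\delta>0$ (either by running the above iteration with a weaker growth assumption, or by applying Gehring's lemma to upgrade $\DR u\in L^{2+\eta}_{\text{\rm loc}}$), after which the combined estimate closes at the full exponent and yields H\"older continuity of $\DR u$ with the given exponent $\sigma$.
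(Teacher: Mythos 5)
Your proposal is correct and is essentially the argument behind the result: the paper gives no proof of Lemma \ref{L5}, deferring to \cite[Theorem 5.19]{giaquinta2013introduction}, whose proof is exactly the Campanato freezing-of-coefficients scheme you describe, including the preliminary Morrey/bootstrap step needed to control the error term $Cr^{2\sigma}\int_{B_r(x_0)}|\DR u|^2\,{\rm dx}$. The only minor imprecision is at the very end: a preliminary estimate $\DR u\in M^{2,n-\delta}_{\text{\rm loc}}$ closes the iteration only at exponent $n-\delta+2\sigma$ (hence gives $C^{0,\sigma-\delta/2}$), so one further round --- using that $\DR u$ is then continuous, hence locally bounded, whence $\int_{B_r(x_0)}|\DR u|^2\,{\rm dx}\le Cr^{n}$ --- is needed to reach the full exponent $\sigma$.
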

	
	From what was proven in the last section, we know that both $L_R$ and the
	right-hand side of (\ref{Eq-m}) are in $C_{\text{\rm loc}}^{0,\beta}$ locally, hence Lemma \ref{L5}
	implies that $m\in C_{\text{\rm loc}}^{1,\beta}$ for some $\beta>0$.
	
	This simplifies the discussion of the regularity of $R$, because the
	$\DR m$-terms in the equation for $\Delta R$ are now locally bounded. We can
	therefore rewrite it as
	\begin{align}\label{Eq-R}
	\Delta R+a(x,\DR R)=0\,,
	\end{align}
	where the function $a$ depends on $R$ and $\DR m$ additionally, but
	those are locally bounded. The function satisfies
	\begin{align}\label{DR+1}
	|a(x,\DR R)|\le C(|\DR R|^2+1).
	\end{align}
	Since $\DR R\in L^2$, this means that $\Delta R$ is in $L^1$, but $L^1$ is
	just not enough to perform regularity theory for $R$. However, the structure
	of the equation almost allows to apply the higher regularity theory for
	harmonic maps, where we could deal with $C|\DR R|^2$ instead of $C(|\DR R|^2+1)$
	on the right-hand side. A simple formal trick will care for that
	condition. Let
	\begin{align}
	u(x)=(u_0(x),u_1(x)):=(R(x),x_1)
	\end{align}
	with values in $\SO(3)\times\R$. Then, letting $\tilde{a}(x,\DR \.u):=(a(x,Du_0),0)$,
	we have
	\begin{align}
	\Delta u+\tilde{a}(x,\DR \.u)=0,
	\end{align}
	where here
	\begin{align}
	|\tilde{a}(x,\DR \.u)|=|a(x,\DR R)|\le C(|\DR R|^2+1)=C(|\DR \.u_0|^2+|\DR \.u_1|^2)=C\,|\DR \.u|^2.
	\end{align}
	Now we can follow the regularity theory for harmonic maps for a while. Note
	that in \cite{moser2005partial}, Lemma 3.7 and Proposition 3.2 assume $u$ to be a harmonic
	map, but the proof uses only $|\Delta u|\le C|\DR \.u|^2$ instead of the full
	harmonic map equation. We therefore can apply
	Lemmas 3.6 and 3.7, Proposition 3.2 in \cite{moser2005partial} to our $u$ and find that
	$\DR \.u\in L_{\text{\rm loc}}^\infty$. This means that the second term in (\ref{Eq-R})
	is in $L_{\text{\rm loc}}^p$ for all $p>1$, and standard $L^p$-theory gives us
	$u\in W_{\text{\rm loc}}^{2,p}$ for all $p>1$. The Sobolev embedding
	$W^{1,p}\hookrightarrow C^{0,1-2/p}$ for $p>2$ then gives us
	$\DR \.u\in C_{\text{\rm loc}}^{0,\beta}$ with $\beta>0$.
	Together with the result for $m$, we now have
	\[
	(m,R)\in C_{\text{\rm loc}}^{1,\beta}\qquad\mbox{ for some }\beta>0.
	\]
	Once we have this, we can iterate the Schauder estimates, i.e.\ differentiate
	the equations and apply Lemma \ref{L5} to partial derivatives of $m,R$ instead
	of $m$ and $R$ alone. Thus we find that $(m,R)\in C_{\text{\rm loc}}^{k,\beta}$ for our $\beta>0$ and all
	$k\in\N$, which means we have proven that $m$ and $R$ are
	smooth on the interior of the domain.\qed

\subsection{Body forces}

It is physically reasonable to consider the equations with an additional external body force
term in the first equation of balance of forces,
\begin{align}\label{balan equ}
  \Div S(\DR m,R)&=f,\\
  \nonumber\Delta R-\Omega_R\cdot\DR R-\skew\big(\DR m\dott S(\DR m,R)\big)R&=0  
\end{align}
with $f\in W^{1,2}(\os,\R^3)$. By integrating in one direction and setting
\begin{align}
  \mathcal{F}(x_1,x_2):=\Big(\int_{(x_0)_1}^{x_1}f(t,x_2)\,{\rm{dt}}\,\big|\,0_{\R^3}\Big)\in \R^{3\times 2}\,,
\end{align}
we can always assume $f=\Div \mathcal{F}$. Note that $\mathcal{F}$ depends on
the first component $(x_0)_1$ of the center of the ball $B_r(x_0)$ on which we
are momentarily working. We have $\mathcal{F}\in W^{1,2}(\os,\R^{3\times2})$,
implying $\mathcal{F}\in L^p(\os,\R^{3\times2})$ for all $p\in[1,\infty)$.
Now we may rewrite the first
equation as
\begin{align}
  \Div(S(\D m,R)-\mathcal{F})=0.
\end{align}
We will need to estimate $\DR{\mathcal{F}}$, which we calculate via
$\partial_1{\mathcal{F}}=(f,0)$ and
$\partial_2{\mathcal F}=(\int_{(x_0)_1}^{x_1}\partial_2f(t,x_2)\,{\rm{dt}},0)$.
The latter gives
\begin{align}
  \nonumber\int_{B_r}|\partial_2{\mathcal{F}}|^{3/2}\,{\rm{dx}}
  \nonumber&=\int_{B_r}\Big|\int_{(x_0)_1}^{x_1}\partial_2f(t,x_2)\,{\rm{dt}}\Big|^{3/2}\,{\rm{dx}}\\
  \nonumber&\le\int_{B_r}\Big(\int_{(x_0)_1-\sqrt{r^2-x_2^2}}^{(x_0)_1+\sqrt{r^2-x_2^2}}
    |\partial_2f(t,x_2)|\,{\rm{dt}}\Big)^{3/2}\,{\rm{dx}}\\
  &\le Cr^{1/3}\int_{B_r}\int_{(x_0)_1-\sqrt{r^2-x_2^2}}^{(x_0)_1+\sqrt{r^2-x_2^2}}
    |\partial_2f(t,x_2)|^{3/2}\,{\rm{dt}}\,{\rm{dx}}\\
  \nonumber&\le Cr^{4/3}\int_{B_r}|\partial_2f|^{3/2}\,{\rm{dx}}.
\end{align}
Since we can always assume $r\le1$, we have proven
\begin{align}\label{grossF}
  \|\DR{\mathcal{F}}\|_{L^{3/2}}\le C\|f\|_{W^{1,3/2}}.
\end{align}

The regularity theory for the more general equation including forces goes pretty much along
the lines of the $\mathcal{F}=0$ case presented in Section 6.1. We only indicate the necessary modifications.
We rewrite (\ref{Del1}) as 
\begin{align}
  \Delta f=\Omega^G\cdot G^{-1}\DR R+G^{-1}\skew\big(\D m\dott(S(\D m,R)-\mathcal{F})\big)R
  +G^{-1}\skew(\D m\dott \mathcal{F})R.
\end{align}
In (\ref{HA1}), we replace $\|S(\D m,R)\|_{L^2(B_r)}$ by
$\|S(\D m,R)-\mathcal{F}\|_{L^2(B_r)}$. Choosing the radius of $B_r$ sufficiently small,
we can also assume that $\|\mathcal{F}\|_{L^2(B_r)}\le\eps$, hence we can estimate
$\|S(\D m,R)-\mathcal{F}\|_{L^2(B_r)}$ by $C(\eps+r)$ just as we did for
$\|S(\D m,R)\|_{L^2(B_r)}$ in (\ref{HA1}). But we also have an additional
term on the right-hand side of that estimate. Using the boundedness of
$G^{-1}$ and $R$, it is estimated as follows, assuming also
$\|\mathcal{F}\|_{L^3(B_r)}\le\eps$. We have
\begin{align}
  \nonumber\sup_{\varphi\in T}\int_{B_r}\<G^{-1}\skew(\D m\dott \mathcal{F})R\,,\,\varphi\>\,{\rm dx}
  &\le C\sup_{\varphi\in T}r^{-1/3}\|\DR m\|_{L^{3/2}(B_r)}\|\mathcal{F}\|_{L^3(B_r)}r^{1/3}\|\varphi\|_{L^\infty(B_r)}\\
  &\le C\sup_{\varphi\in T}\|\D m\|_{M^{3/2,1/2}}\|\mathcal{F}\|_{L^3(B_r)}r^{2/3}\|\DR\varphi\|_{L^{3}(B_r)}\\\
  \nonumber&\le C\,\eps \,r^{1/3}\|\D m\|_{M^{3/2,1/2}},
\end{align}
which can be absorbed in the right-hand side of (\ref{HA1}). Hence the
conclusion of (\ref{HA1})
continues to hold also in the $\mathcal{F}\ne0$ case.

The second modification we have to make is that we now Hodge-decompose
$S(\DR m,R)-\mathcal{F}$, which means
\begin{align}
	\pj(\zwei \,R\,\Pj^2(R^T(\DR m|0)-(\id_2|0)))-\mathcal{F}=\DT\alpha+\chi\,.
	\end{align}
The additional term involving $\mathcal{F}$ on the right-hand side of (\ref{dmstart})
can be written as $-\DivT(\mathcal{F}-\mathcal{F}_{B_r})$. In (\ref{sechs27}), $(\mathcal{F}-\mathcal{F}_{B_r})$ can be
processed exactly like $(R-R_{B_R})$, resulting in an additional
$Cr\|\DR\psi\|_{L^3(B_r)}\|\DR \mathcal{F}\|_{L^{3/2}(B_r)}$, which can be estimated using
(\ref{grossF}) and $\psi\in U$ as follows, making the additional smallness assumption $\|f\|_{W^{1,2}(B_r)}\le\eps$ for $r$,
\begin{align}
Cr\|\DR\psi\|_{L^3(B_r)}\|\DR \mathcal{F}\|_{L^{3/2}(B_r)}\le C\,r\,\|f\|_{W^{1,3/2}(B_r)}
\le C\,r^{4/3}\|f\|_{W^{1,2}(B_r)}\le\eps\, r^{4/3}.
\end{align}
This additional term in (\ref{sechs27}) now contributes to the right-hand
side of (\ref{It2}), but here enlarges only the $r^{4/3}$ and $\rho^{4/3}$
terms that are there, anyway. By the same argument, taking $\mathcal{F}$ into account
also contributes only to more of $s^{4/3}$ terms in
\begin{align}
	\nonumber C^{-1}\|\DR m\|_{L^{3/2}(B_s)}-Cs^{4/3}
	&\leq \|\pi_{12}(\zwei \,R\,\Pj^2(R^T(\DR m|0)-(\id_2|0)))-\mathcal{F}\|_{L^{3/2}(B_s)}\\
	&\leq C(\|\DR m\|_{L^{3/2}(B_s)}+s^{4/3}),
\end{align}
which updates (\ref{u+l}). Hence the contributions of the modified versions
of both (\ref{sechs27}) and (\ref{u+l}) do not change the conclusion
of (\ref{It2}).

Now that we have adapted (\ref{HA1}) and (\ref{It2}) to nonvanishing body
forces, we can conclude H\"older continuity just as in the end of
Section \ref{suse:hoel}, under the weak assumption of $f$ being in $W^{1,2}$.
If we assume $f\in C^\infty$ instead, both $f$ and $\mathcal{F}$ are bounded,
and the higher regularity proof from Section \ref{suse:high} goes through
with hardly any modification. Note, for example, that (\ref{DR+1}) continues
to hold.

        \subsection{Remarks on a special case}
	
	Our system simplifies considerably when $\mu=\mu_c=\kappa$, which makes
	$\Pj$ the identity\footnote{This case corresponds to $\mu=\lambda=\mu_c$ in the Cosserat bulk model and Poisson number $\nu=\frac{\lambda}{2(\mu+\lambda)}=\frac{1}{4}$ (nearly satisfied for magnesium).}. Even though this assumption is not too natural from the
	point of applications, we would like to comment briefly on that case.
	
	The simplified variational functional reads now
	\begin{align}
	\nonumber E(m,R):&=\int_{\os} \mu\,|R^T(\DR m|R_3)-\id_3|^2+|\DR R|^2\,{\rm{dx}}=\int_{\os}\mu\,|R^T(\DR m|R_3)-R^TR|^2+|\DR R|^2\,{\rm{dx}}\\
	&=\int_{\os} \mu\,|(\DR m|0)-(R_1|R_2|0)|^2+|\DR R|^2\,{\rm{dx}}\,,
	\end{align}
	which has the Euler-Lagrange equations (c.f. (\ref{ELa1}))
	\begin{align}\label{ELs1}
	\Delta m-\Div(R_1|R_2|0)=0\,,
	\end{align}
	and
	\begin{align}\label{ELs2}
	\Delta R-\Omega_R\cdot\DR R+\mu\, R\skew\big(R^T(\DR m|0)\big)= \Delta R-\Omega_R\cdot\DR R+\mu\,\skew\big((\D m|0)R^T\big)\cdot R=0.
	\end{align}
	
	The point here is that the last term in the second equation now depends on
	$\DR m$ only linearly, making it an $L^2$-term instead of $L^1$ (the $L^1$-part is cancelled by the skew-operator). But harmonic map
	type equations with a right-hand side in $L^2$ have been studied by Moser
	in quite some generality, see the book \cite{moser2005partial} for an
	excellent exposition of the methods.
	
	In particular, Moser has two theorems that help us here. Here,
	$N\subset\R^n$ is a compact manifold, $\Ob\subset\R^d$ a domain,
        and $\mathrm{II}$ is the second findamental form of the target
        manifold, which corresponds to our term quadratic in $\DR R$, i.e. $\sum_i{\mathrm{II}}(u)(\partial_iu,\partial_iu)=\Omega_u\cdot \D u$ in our case.
	
	\begin{theorem}{\rm{\cite[Theorem 4.1]{moser2005partial}}} \label{4.1}
		Suppose $u\in W^{1,2}(\Ob,N)$ is a stationary solution of
		\[
		\Delta u-\sum_i{\mathrm{II}}(u)(\partial_iu,\partial_iu)=f\,,
		\]
		in $\Ob$, for a function $f\in L^p(\Ob,\R^n)$, where $p>\frac{d}2$
		and $p\ge2$. Then there exists a relatively closed set
		$\Sigma\subset\Ob$ of vanishing $(d-2)$-dimensional Hausdorff
		measure, such that $u\in C^{0,{\alpha}}_{\text{\rm loc}}(\Ob\setminus\Sigma,N)$ for
		a number ${\alpha}>0$ that depends only on $m$, $N$, and $p$.
	\end{theorem}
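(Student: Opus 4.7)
The plan is a two-step partial-regularity proof in the spirit of Bethuel--Evans--Moser: an almost-monotonicity formula for stationary solutions followed by an $\varepsilon$-regularity theorem of Rivi\`ere--Struwe type, adapted to accommodate the inhomogeneity $f\in L^p$ with $p>d/2$. The singular set $\Sigma$ is then defined as the set of points of bad density, and its Hausdorff dimension is controlled via the monotonicity.

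For Step~1 I would exploit stationarity of $u$ under inner variations $u_t(x):=u(x+t\,\xi(x))$ with $\xi\in C^\infty_c(\Ob,\R^d)$ to obtain the inner-variation identity
\[
  \int_{\Ob}\bigl(|\DR u|^2\,\mathrm{div}\,\xi - 2\,\langle \DR u\,\DR\xi,\DR u\rangle\bigr)\,{\rm{dx}} = 2\int_{\Ob}\langle f,\,\DR u\cdot\xi\rangle\,{\rm{dx}}.
\]
Specialising to $\xi(x) = \eta(|x-x_0|/r)(x-x_0)$ and integrating in $r$ produces the almost-monotonicity
\[
  \rho^{2-d}\int_{B_\rho(x_0)}|\DR u|^2\,{\rm{dx}} \le r^{2-d}\int_{B_r(x_0)}|\DR u|^2\,{\rm{dx}} + C\,r^{2(2-d/p)}\,\|f\|_{L^p(\Ob)}^2
\]
for $0<\rho<r<\mathrm{dist}(x_0,\partial\Ob)$. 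The error carries a strictly positive power of $r$ precisely because $p>d/2$; this is where that hypothesis enters.

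For Step~2 I claim there is $\varepsilon_0>0$ such that if $r^{2-d}\int_{B_r(x_0)}|\DR u|^2<\varepsilon_0$ and $r$ is sufficiently small, then $u\in C^{0,\alpha}(B_{r/2}(x_0),N)$. I would follow the gauge-and-Hodge strategy used for Proposition~\ref{Prop:Hoel}: pass to a Coulomb gauge $G$ via a higher-dimensional analogue of Lemma~\ref{L2} so that the transformed connection becomes divergence-free, Hodge-decompose $G^{-1}\DR u$ as in Lemma~\ref{L3}, and bound the quadratic curvature term $\sum_i\mathrm{II}(u)(\partial_iu,\partial_iu)$ via Hardy--BMO duality (Lemma~\ref{L1}), using that it lies in $\mathcal{H}^1$ by compensated compactness even though it is only $L^1$ a priori. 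The forcing $f$ is then absorbed through H\"older's inequality with the subcritical factor $r^{2-d/p}$. The outcome is the geometric decay
\[
  (\theta r)^{2-d}\int_{B_{\theta r}(x_0)}|\DR u|^2\,{\rm{dx}} \le \tfrac12\,r^{2-d}\int_{B_r(x_0)}|\DR u|^2\,{\rm{dx}} + C\,r^{2(2-d/p)},
\]
which iterates to Morrey-space control of $\DR u$ and hence to H\"older continuity of $u$ via a Dirichlet-growth criterion as in Lemma~\ref{Lem:Mor}.

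Finally, defining
\[
  \Sigma := \{\,x_0\in\Ob \,:\, \liminf_{r\searrow 0}\, r^{2-d}\!\!\int_{B_r(x_0)}\!|\DR u|^2\,{\rm{dx}} \ge \varepsilon_0\,\},
\]
Step~2 gives $u\in C^{0,\alpha}_{\text{\rm loc}}(\Ob\setminus\Sigma,N)$ directly, and a Vitali-covering argument using Step~1 produces $\mathcal{H}^{d-2}(\Sigma)=0$: any efficient cover of $\Sigma$ by balls satisfying $r_i^{2-d}\int|\DR u|^2\ge\varepsilon_0$ obeys $\sum r_i^{d-2}\le C\varepsilon_0^{-1}\int_{\text{\rm nbhd}(\Sigma)}|\DR u|^2$, which tends to $0$ as the neighbourhood shrinks. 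The main obstacle is Step~2: the quadratic term is scaling-critical and only $L^1$ a priori, so its treatment demands unlocking the hidden compensated-compactness structure via a Rivi\`ere-type gauge or H\'elein's moving frame. The inhomogeneity $f\in L^p$ with $p>d/2$ is strictly subcritical and harmless in principle, but the iteration must be arranged so that the geometric factor $\tfrac12$ is preserved on the critical quadratic part while only the positive-power-of-$r$ error terms grow.
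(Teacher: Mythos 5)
This theorem is quoted verbatim from Moser's book and the paper offers no proof of its own, so the only meaningful comparison is with the cited source. Your outline — the inner-variation/almost-monotonicity formula with error $Cr^{2(2-d/p)}\|f\|_{L^p}^2$ (positive exponent exactly because $p>d/2$), an $\varepsilon$-regularity step handling the critical quadratic term via compensated compactness (Coulomb gauge or moving frames), and the Vitali covering argument for $\mathcal{H}^{d-2}(\Sigma)=0$ — is precisely the strategy of Moser's proof, so the proposal is correct and takes essentially the same route.
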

	
	\begin{theorem}{\rm{\cite[Theorem 4.2]{moser2005partial}}}\label{4.2}
		Under the assumptions of the previous theorem, if $n\le4$ and $p=2$,
		we also have $u\in W^{2,2}_{\text{\rm loc}}\cap W^{1,4}_{\text{\rm loc}}(\Ob\setminus\Sigma,N)$.
	\end{theorem}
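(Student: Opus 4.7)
The statement is quoted verbatim from Moser's book, so ``my proof'' is essentially an outline of Moser's strategy applied in the setting of the previous theorem. My plan is to start from the partial regularity given by Theorem 4.1, which provides $u \in C^{0,\alpha}_{\text{\rm loc}}(\Ob \setminus \Sigma, N)$ for some $\alpha > 0$ on the complement of the singular set, and then bootstrap on the regular set to obtain the quantitative $W^{2,2}_{\text{\rm loc}} \cap W^{1,4}_{\text{\rm loc}}$ conclusion.

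The central tool would be a Caccioppoli-type estimate for $D^2 u$. On a small ball $B_r(x_0) \subset \Ob \setminus \Sigma$, I would test the equation $\Delta u = \sum_i \mathrm{II}(u)(\partial_i u, \partial_i u) + f$ against a cutoff times $-\Delta u$ and integrate by parts, or equivalently take difference quotients and invoke interior $L^2$-theory for the Poisson equation. This produces, schematically,
\[
  \int_{B_{r/2}} |D^2 u|^2 \,{\rm dx} \le C \int_{B_r} \bigl( |\DR u|^4 + |f|^2 \bigr) \,{\rm dx} + \text{lower order terms}.
\]
The $|\DR u|^4$ term is the heart of the difficulty because $|\DR u|^2 \in L^2$ is precisely what we want to conclude; the nonlinearity is scaling-critical.

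To close the argument, I would exploit the H\"older continuity from Theorem 4.1 to guarantee that $\|\DR u\|_{L^2(B_r)}$ is arbitrarily small once $r$ is chosen small enough, and couple this with the Sobolev embedding $W^{2,2} \hookrightarrow W^{1,4}$, which is borderline-admissible up to dimension $4$. The hypothesis $n \le 4$ is exactly what makes this embedding available and allows $\|\DR u\|_{L^4(B_{r/2})}^4$ to be absorbed into $\|D^2 u\|_{L^2(B_{r/2})}^2$ through a self-improving iteration. The main obstacle is the simultaneous presence of the quadratic nonlinearity and a forcing that lies only in $L^2$ at the critical Sobolev exponent $p = d/2$; stationarity of $u$ is essential here, since it supplies the monotonicity formula and local energy decay that make the absorption actually succeed. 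Once $D^2 u \in L^2_{\text{\rm loc}}$ is established on the regular set, the $W^{1,4}_{\text{\rm loc}}$ conclusion follows directly from Sobolev embedding, completing the proof.
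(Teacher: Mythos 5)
The paper does not prove this statement at all: Theorem \ref{4.2} is quoted verbatim from Moser's book and used as a black box in the special case $\mu=\mu_c=\kappa$, so there is no in-paper argument to compare yours against. Your sketch is nonetheless a faithful reconstruction of the standard strategy behind Moser's result: partial H\"older regularity from Theorem \ref{4.1} on $\Ob\setminus\Sigma$, a difference-quotient/Caccioppoli estimate for $D^2u$, absorption of the critical term $\int|\DR u|^4$, and then the embedding $W^{2,2}\hookrightarrow W^{1,4}$, whose availability is indeed the role of the dimension restriction (note that the numerology concerns the domain dimension $d$, not the ambient dimension $n$ of the target as the transcribed statement suggests).

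The one place where your outline glosses over the genuinely delicate point is the absorption step. In the borderline dimension $d=4$ the embedding $W^{1,2}\hookrightarrow L^4$ is critical and carries no interpolation gain, so the bound it yields for $\int|\DR u|^4$ is \emph{quartic} in $\|D^2u\|_{L^2}$ and cannot be absorbed into the quadratic left-hand side merely because $\|\DR u\|_{L^2(B_r)}$ is small; for $d>2$ that smallness is automatic by absolute continuity of the integral and carries no information. What the argument actually requires is the scale-invariant smallness $\sup_{y,\rho}\rho^{2-d}\int_{B_\rho(y)}|\DR u|^2\,{\rm dx}\le\varepsilon$ on the regular set, combined with the interpolation inequality
\[
\int\eta^2|\DR u|^4\,{\rm dx}\le C\,\Big(\sup_{y,\rho}\rho^{2-d}\int_{B_\rho(y)}|\DR u|^2\,{\rm dx}\Big)\int\big(\eta^2|D^2u|^2+|\DR\eta|^2|\DR u|^2\big)\,{\rm dx},
\]
which is linear rather than quadratic in $\int\eta^2|D^2u|^2$ and hence absorbable. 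This Morrey-norm smallness is exactly what the decay estimate in the proof of Theorem \ref{4.1} (via stationarity and the monotonicity formula) supplies on $\Ob\setminus\Sigma$. You do invoke stationarity and monotonicity at the end, so all the ingredients are named; but the sentence attributing the absorption to H\"older continuity of $u$ and to the smallness of $\|\DR u\|_{L^2(B_r)}$ should be replaced by the Morrey-norm statement, since otherwise the self-improving iteration does not close in $d=4$.
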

	
	While those theorems are highly nontrivial, it is standard to deduce
	regularity of the solutions to our model in the special case considered here.
	
	\begin{theorem}[interior regularity for $\mu=\mu_c=\kappa$]
		Any solution $(m,R)\in W^{1,2}(\os,\R^3\times \SO(3))$
		of the simplified problem (\ref{ELs1})-(\ref{ELs2}) is smooth on the interior of the domain $\os$.
	\end{theorem}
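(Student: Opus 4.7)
The proof exploits the fact that, in the simplified case $\mu=\mu_c=\kappa$, the coupling term in (\ref{ELs2}) is only linear in $\DR m$ and hence lies in $L^2$ rather than merely $L^1$. This is exactly the source-term regularity assumed in Moser's Theorems~\ref{4.1} and \ref{4.2}, which we will apply first to upgrade $R$ to H\"older regularity, then bootstrap via the linear equation (\ref{ELs1}) and standard Schauder / $L^p$-theory.

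First, we rewrite (\ref{ELs2}) as
\[
  \Delta R-\Omega_R\cdot\DR R=f,\qquad f:=-\mu\,\skew((\DR m|0)R^T)\cdot R.
\]
The left-hand side is precisely the harmonic-map operator for maps into $\SO(3)\subset\R^{3\times3}$, with $\Omega_R\cdot\DR R=\sum_i\mathrm{II}(R)(\partial_iR,\partial_iR)$ playing the role of the second-fundamental-form contraction (as observed in the discussion preceding Theorem~\ref{4.1}). Since $R\in L^\infty$ and $\DR m\in L^2$, we have $f\in L^2(\os,\R^{3\times 3})$. Theorem~\ref{4.1} with $d=2$ and $p=2$ (so that $p>d/2=1$ and $p\ge 2$) then yields $R\in C^{0,\alpha}_{\text{\rm loc}}(\os\setminus\Sigma,\SO(3))$ with $\Sigma$ of vanishing $0$-dimensional Hausdorff measure, so $\Sigma=\emptyset$ and $R$ is locally H\"older continuous on all of $\os$. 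Theorem~\ref{4.2} additionally gives $R\in W^{2,2}_{\text{\rm loc}}\cap W^{1,4}_{\text{\rm loc}}$.

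With $R\in C^{0,\alpha}_{\text{\rm loc}}$ in hand, the linear equation (\ref{ELs1}) reads $\Delta m=\Div(R_1|R_2|0)=-\Div\mathcal{F}$ with $\mathcal{F}:=-(R_1|R_2|0)\in C^{0,\alpha}_{\text{\rm loc}}$. Lemma~\ref{L5} applied with $A\equiv\mathrm{id}$ then gives $m\in C^{1,\alpha}_{\text{\rm loc}}(\os,\R^3)$; in particular $\DR m$ is locally bounded. Once $\DR m$ is bounded, the right-hand side of (\ref{ELs2}) obeys $|\Delta R|\le C(|\DR R|^2+1)$, which is exactly the setup treated in the higher-regularity step of Section~\ref{suse:high}. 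Invoking Lemmas~3.6, 3.7 and Proposition~3.2 of \cite{moser2005partial} as done there produces $\DR R\in L^\infty_{\text{\rm loc}}$, classical $L^p$-theory upgrades this to $R\in W^{2,p}_{\text{\rm loc}}$ for every $p<\infty$, and Sobolev embedding delivers $\DR R\in C^{0,\beta}_{\text{\rm loc}}$. Feeding the improved $R$ back into (\ref{ELs1}) raises $m$ to $C^{2,\beta}_{\text{\rm loc}}$ via Lemma~\ref{L5}, and iterating Schauder estimates on partial derivatives of the two coupled equations yields $(m,R)\in C^{k,\beta}_{\text{\rm loc}}$ for every $k\in\N$, i.e.\ smoothness in the interior.

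The one delicate point of the plan is that Moser's theorems are formulated for stationary maps, whereas we only assume $(m,R)$ to be a weak solution. In the critical dimension $d=2$ this is not a genuine obstacle: one may either invoke Rivi\`ere's 2D theory, which gives H\"older continuity of weak solutions of such harmonic-map-type systems directly, or simply repeat the Hodge-decomposition argument of Section~\ref{suse:hoel} for the simplified equation, which is in fact considerably easier here because the coupling term is an honest $L^2$ function rather than the $\mathcal{H}^1$-type product handled in the general case. Everything else in the above plan is a routine bootstrap.
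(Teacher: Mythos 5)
Your proposal is correct and follows essentially the same route as the paper: the decisive observation that the coupling term $-\mu\,\skew((\DR m|0)R^T)\cdot R$ is an honest $L^2$ function, then Moser's Theorems~\ref{4.1} and \ref{4.2} for $R$ (with $\Sigma=\emptyset$), linear theory for $m$, and a bootstrap to smoothness --- your bootstrap re-invokes the gradient estimates of Section~\ref{suse:high} where the paper more simply uses $m,R\in W^{2,2}_{\text{\rm loc}}\hookrightarrow W^{1,q}_{\text{\rm loc}}$ and $L^q$-theory, but both variants work. Your caveat that Moser's theorems are stated for stationary rather than merely weak solutions is a fair point that the paper's proof passes over in silence, and your proposed remedies in the critical dimension $d=2$ are appropriate.
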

	
	\begin{proof} We first consider eq.\ (\ref{ELs2}). Since $-R\skew(R^T(\D m|0))\in L^2$, we can apply Theorem~\ref{4.1} and Theorem~\ref{4.2} to find
	$R\in C^{0,{\alpha}}_{\text{\rm loc}} \cap W^{2,2}_{\text{\rm loc}}(\os,\SO(3))$. Note that $\Sigma=\emptyset$
	here, since its $0$-dimensional Hausdorff measure vanishes.
	Similarly, by $L^2$-theory
	for (\ref{ELs1}), we have $m\in W^{2,2}_{\text{\rm loc}}(\os,\R^3)$. By the embedding
	$W^{2,2}\hookrightarrow W^{1,q}$ for all $q\in[2,\infty)$, we find that
	$\Delta m$ and $\Delta R$ are in $L^q_{\text{\rm loc}}$ for every $q<\infty$, hence
	$(m,R)\in W^{2,q}_{\text{\rm loc}}(\os,\R^3\times \SO(3))$ for all $q<\infty$.
	This, in turn, embeds into $C^{1,{\alpha}}_{\text{\rm loc}}$
	for all ${\alpha}\in(0,1)$, and hence the right-hand sides are Hölder continuous.
	From here, we can use Schauder estimates to show that $(m,R)$ is $C^\infty_{\text{\rm loc}}$
	on $\os$.
	\end{proof}
\section{Conclusion and open problems} 
	We have deduced interior Hölder regularity for a Dirichlet type geometrically nonlinear Cosserat flat membrane shell. The model is objective and isotropic but highly nonconvex. Therefore, our regularity result is astonishing and shows again the great versatility of the Cosserat approach compared to other more classical models.	
	At present, we are limited to treating the uni-constant curvature case $|\DR R|^2$ since only then can sophisticated methods for harmonic functions with values in $\SO(3)$ be employed. This calls for more effort of researchers to generalize the foregoing. Progress in this direction would also allow to consider the full Cosserat membrane-bending flat shell \cite{neff2004geometrically,neff2004existence,neff2004geometricallyhabil,birsan2013existence}. Another case warrants further attention: taking the Cosserat couple modulus $\mu_c=0$ in the model (in-plane drill allowed, but no energy connected to it) may still allow for regular minimizers. However, even the existence of minimizers remains unclear at present since it 
	hinges on some sort of a priori regularity for the rotation field $R$ (the non-quadratic curvature term $|\DR R|^{2+\varepsilon}$, $\varepsilon>0$, together with zero Cosserat couple modulus  $\mu_c=0$ allows  for minimizers \cite{neff2007geometrically,neff2002korn}). Finally, it is interesting to understand regularity properties of Cosserat shell models with curved initial geometry \cite{NeffPartI,GhibaNeffPartII,Ghiba2022}.

        We expect some boundary regularity to hold, too. On the geometric analysis side, an adaptation of Rivi\`ere's boundary methods to problems with continuous Dirichlet boundary data has been performed in \cite{mueller2009boundary}, which one could try to use. But with a view on applications, partially free boundary problems would probably be more interesting. 
\vspace{0.7cm}

\begin{footnotesize}
	\noindent{\bf Acknowledgements:}   
	The authors are indebted to Oliver Sander and Lisa Julia Nebel (Chair of Numerical Mathematics, Technical University of Dresden) for preparing the calculations leading to Figures \ref{numeric figure A},\ref{numeric figure B} and \ref{numeric figure C}. The first author thanks Armin Schikorra (University of Pittsburgh) for an interesting discussion. The second author is grateful to Maryam Mohammadi Saem and Peter Lewintan (Faculty of Mathematics, University of Duisburg-Essen) for help in preparing the manuscript and the pictures.\\\strut\\
	Andreas Gastel acknowledges support in the framework of the DFG-Priority Programm 2256 "Variational Methods for Predicting Complex Phenomena in Engineering Structures and Materials" with the project title "Very singular solutions of a nonlinear Cosserat elasticity model for solids" with Project-no. \;441380936\; and Patrizio Neff acknowledges support in the framework of the DFG-Priority Programm 2256 "Variational Methods for Predicting Complex Phenomena in Engineering Structures and Materials" with the project title "A variational scale-dependent transition scheme: From Cauchy elasticity to the relaxed micromorphic continuum" with Project-no. 440935806 and also the DFG research grant \;Neff 902/8-1 Project-no. 415894848 with the title "Modelling and mathematical analysis of geometrically nonlinear Cosserat shells with higher order and residual effects".
\end{footnotesize}

\footnotesize
\section*{References}
\printbibliography[heading=none]
\addcontentsline{toc}{section}{References}

\begin{appendix}
\section{Appendix}

\subsection{Three-dimensional Euler-Lagrange equations in dislocation tensor format}
Here, for the convenience of the reader we derive the three-dimensional Euler-Lagrange equations based on the curvature expressed in the dislocation tensor $\bm{\alpha}=R^T\Curl R$. We can write the bulk elastic energy as
\begin{align}\label{Euler1}
E^{\text{3D}}(\varphi,R)=\int_\Ob W_{\text{mp}}(\overline{U})+W_{\text{disloc}}({\bm{\alpha}})\;{\rm{dx}}\,,\quad \overline{U}=R^T\DR \varphi\,,\quad {\bm{\alpha}}=R^T\Curl R\,.
\end{align}
Taking variations of (\ref{Euler1}) w.r.t. the deformation $\varphi$ leads to
\begin{align}
\delta E^{\text{3D}}{(\varphi,R)}\cdot \delta \varphi=\int_{\Ob}\iprod{\DR W_{\text{mp}}(\overline{U}),R^T\DR \delta\varphi}_{\R^{3\times 3}}&\;{\rm{dx}}=0\,,\qquad\forall\;\delta\varphi\in C_0^\infty(\Ob,\R^3)\\
\nonumber\Leftrightarrow& \int_{\Ob}\iprod{R\,\DR W_{\text{mp}}(\overline{U}),\DR \delta\varphi}_{\R^{3\times 3}}\;{\rm{dx}}=\int_{\Ob}\iprod{\Div[R\cdot \DR W_{\text{mp}}(\overline{U})],\delta\varphi}_{\R^3}\,{\rm{dx}}=0\,.
\end{align}
Taking variation w.r.t. $R\in \SO(3)$ results in (abbreviate $F:=\DR \varphi$)
\begin{align}\label{Euler2}
\nonumber \delta E^{\text{3D}}{(\varphi,R)}\cdot \delta R&=\int_{\Ob}\iprod{\DR W_{\text{mp}}(\overline{U}),\delta R^TF}+\iprod{\DR W_{\text{disloc}}({\bm{\alpha}}),\delta R^T\Curl R+R^T\Curl \delta R}\;{\rm{dx}}\\
\nonumber &=\int_{\Ob}\iprod{\DR W_{\text{mp}}(\overline{U}),\delta R^TR\cdot R^TF}+\iprod{\DR W_{\text{disloc}}({\bm{\alpha}}),\delta R^TR\cdot R^T\Curl R+R^T\Curl \delta R}\;{\rm{dx}}\\
&=\int_{\Ob}\iprod{\DR W_{\text{mp}}(\overline{U})\cdot\overline{U}^T,\delta R^TR}+\iprod{\DR W_{\text{disloc}}({\bm{\alpha}}),\delta R^TR\cdot {\bm{\alpha}}+R^T\Curl \delta R}\;{\rm{dx}}=0\,.
\end{align}
Since $R^TR=\id_3$, it follows that $\delta R^TR+R^T\delta R=0$ and $\delta R^TR=A\in \so(3)$ is arbitrary. Therefore, (\ref{Euler2}) can be written as
\begin{align}
0=\int_{\Ob}\iprod{\DR W_{\text{mp}}(\overline{U})\cdot\overline{U}^T,A}+\iprod{\DR W_{\text{disloc}}({\bm{\alpha}})\cdot{\bm{\alpha}}^T,A}+\iprod{\DR W_{\text{disloc}}({\bm{\alpha}}),R^T\Curl(RA^T)}\;\rm{dx}
\end{align}
for all $A\in C_0^\infty(\mathcal{U},\so(3))$. Using that $\Curl$ is a self-adjoint operator, this is equal to
\begin{align}
0&=\int_{\Ob}\iprod{\DR W_{\text{mp}}(\overline{U})\cdot \overline{U}^T+\DR W_{\text{disloc}}({\bm{\alpha}})\;{\bm{\alpha}}^T,A}+\iprod{\Curl(R\;\DR W_{\text{disloc}}({\bm{\alpha}})),RA^T}\;\rm{dx}\\
\nonumber&=\int_{\Ob}\iprod{\DR W_{\text{mp}}(\overline{U})\cdot\overline{U}^T+\DR W_{\text{disloc}}({\bm{\alpha}})\;{\bm{\alpha}}^T-R^T\Curl(R\;\DR W_{\text{disloc}}({\bm{\alpha}})),A}\;{\rm{dx}}\qquad\forall A\in C_0^\infty(\mathcal{U},\so(3))\,.
\end{align}
Thus, the strong form of the Euler-Lagrange equations reads
\begin{align}\label{system}
\nonumber\Div[R\;\DR W_{\text{mp}}(\overline{U})]&=0\,,\hspace{3cm}\text{"balance of forces"}\,,\\
\nonumber\skew[R^T\,\Curl(R\;\DR W_{\text{disloc}}({\bm{\alpha}}))]&=\skew(\DR W_{\text{mp}}(\overline{U})\cdot\overline{U}^T+\DR W_{\text{disloc}}({\bm{\alpha}})\cdot {\bm{\alpha}}^T)\,,\\
&\hspace{4cm}\text{"balance of angular momentum"}\,.
\end{align}
Defining the first Piola-Kirchhoff stress tensor 
\begin{align}\label{S_1}
S_1(\D \varphi,R):=\D_F [W_{\text{mp}(\overline{U})}]=R\,\D W_{\text{mp}}(\overline{U})=R\cdot T_{\text{Biot}}(\overline{U})\,,
\end{align}
where the non-symmetric Biot type stress tensor is given by
\begin{align}\label{S_11}
T_{\text{Biot}}:=\D W_{\text{mp}}(\overline{U})\,,
\end{align}
allows to rewrite the system (\ref{system}) as
\begin{align}\label{W(dilsloc)}
\nonumber\Div S_1(\D \varphi,R)&=0\\
\skew[R^T\,\Curl(R\;\DR W_{\text{disloc}}({\bm{\alpha}}))]&=\skew\Big(T_{\text{Biot}}(\overline{U})\cdot\overline{U}^T+\DR W_{\text{disloc}}({\bm{\alpha}})\cdot {\bm{\alpha}}^T\Big)\,.
\end{align}
Observe that (\ref{W(dilsloc)})$_1$ is a uniformly elliptic linear system for $\varphi$ at given $R$.
It is clear that global minimizers $\varphi\in W^{1,2}(\Ob,\R^3)$ and $R\in W^{1,2}(\Ob,\SO(3))$ are weak solutions of the Euler-Lagrange equations.\\
If $\DR W_{\text{disloc}}({\bm{\alpha}})\equiv0$ (no moment stresses) then balance of angular momentum turns into the symmetry constraint
\begin{align}
\DR W_{\text{mp}}(\overline{U})\cdot \overline{U}^T\in \Sym(3)\,.
\end{align}
A complete discussion of the solutions to this constraint can be found in \cite{neff2019explicit,neff2008symmetric}.
\subsection{Two-dimensional Euler-Lagrange equations: alternative derivation}\label{systemApp}
\begin{align}\label{E2D}
\nonumber E^{\text{2D}}{(m,R)}&=\int_\os\hspace{-0.05cm}\mu|\dev\sym(R^T(\DR m|R_3)-\id_3)|^2\hspace{-0.05cm}+\hspace{-0.05cm}\mu_c|\skew(R^T(\DR m|R_3)-\id_3)|^2\hspace{-0.05cm}+\hspace{-0.05cm}\frac{\kappa}{3}\tr(R^T(\DR m|R_3)-\id_3)^2 +|\DR R|^2\,{\rm{dx}}\\
&=\int_\os |\Pj (R^T(\DR m|R_3)-\id_3)|^2+|\DR R|^2\;{\rm{dx}}= \int_{\os} |\Pj(R^T(\DR m|R_3)-\id_3)|^2+|\partial_xR|^2+|\partial_yR|^2\; {\rm{dx}}\,.
\end{align}
Taking free variations w.r.t the midsurface deformation $m$ leads to
\begin{align}
\delta E^{\text{2D}}{(m,R)}\cdot \delta \vartheta&=\int_{\os}2\iprod{\Pj(R^T(\DR m|R_3)-\id_3),\Pj(R^T(\DR \vartheta|0))}_{\R^{3\times 3}}\;{\rm{dx}}\qquad \forall \vartheta\in C_0^\infty(\os,\R^3)\\
\nonumber&=\int_\os 2\iprod{\Pj^T\Pj(R^T(\DR m|R_3)-\id_3),(R^T(\DR \vartheta|0))}_{\R^{3\times 3}}\;{\rm{dx}}=\int_\os \iprod{2R\,\Pj^T\Pj(R^T(\DR m|R_3)-\id_3),(\DR \vartheta|0)}_{\R^{3\times 3}}\;{\rm{dx}}\\
\nonumber&=\int_\os \iprod{\pj(2R\,\Pj^T\Pj(R^T(\DR m|R_3)-\id_3)),\DR \vartheta}_{\R^{3\times 2}}\;{\rm{dx}}=\int_\os \iprod{\Div\pj(2R\,\Pj^T\Pj(R^T(\DR m|R_3)-\id_3)),\vartheta}_{\R^{3}}\;{\rm{dx}}=0\,.
\end{align}
Thus the strong form of balance of forces can be expressed as 
\begin{align}
\Div S(\DR m,R)&=0\,,
\end{align}
where 
\begin{align}
S(\DR m,R)=\pj(\zwei\, R\,\Pj^T\Pj(R^T(\DR m|R_3))-\id_3)=\pj(\zwei\, R\,\Pj^T\Pj(R^T(\DR m|0)-(\id_2|0)))\,,
\end{align}
is the {\it{first Piola-Kirchhoff type force-stress tensor}} and 
\begin{align}\label{nonsym Biot}
\nonumber T(\DR m,R)&=\zwei \,\Pj^T\Pj(R^T(\DR m|0)-(\id_2|0))=\zwei \,\Pj^T\Pj(R^T(\DR m|R_3)-\id_3)\\
&=\zwei \,\mu\dev\sym(\overline{U}-\id_3)+\zwei \,\mu_c\skew (\overline{U}-\id_3)+\frac{\zwei \,\kappa}{3}\tr(\overline{U}-\id_3)\cdot\id_3\,,\quad\quad\overline{U}=R^T(\DR m|R_3)\,,
\end{align}
is the {\it{non-symmetric Biot-type stress tensor}} (symmetric if $\mu_c=0$).
 We note the relation
\begin{align}
S(\D m,R)=\pj (R\cdot T(\D m,R))\,.
\end{align}
resembling relation (\ref{S_1}).

For balance of angular momentum we proceed similarly, but need some preparation. It is clear that
\begin{align}
(R+\delta R)^T(\DR m|(R+\delta R)e_3)-\id_3=R^T(\DR m|R_3)-\id_3+\underbrace{R^T(0|0|\delta R_3)+(\delta R^T(\DR m|R_3))}_{\text{linear increment}}+\delta R^T(0|0|\delta R_3)\,.
\end{align}
Therefore, taking variations of the energy w.r.t $R$ leads to
\begin{align}
\delta E^{\text{2D}}{(m,R)}\cdot \delta R=\int_\os 2\iprod{\Pj(R^T(\DR m|R_3)-\id_3),\Pj(R^T(0|0|\delta R_3)+\delta R^T(\DR m|R_3))}+2\iprod{\partial_xR,\partial_x\delta R}+2\iprod{\partial_y R,\partial_y \delta R}\,{\rm{dx}}=0\,.
\end{align}
Since $R^TR=\id_3$, we have $\delta R^TR+R^T\delta R=0$, hence $\delta R=RA$ for $A\in \so(3)$ arbitrary. Therefore the latter turns into
\begin{align}\label{longapp}
\nonumber \int_\os &2\iprod{\Pj^T\Pj(R^T(\DR m|R_3)-\id_3),R^T(0|0|(RA)e_3)+(RA)^T(\DR m|R_3)}-2\iprod{\partial^2_xR,RA}-2\iprod{\partial^2_yR,RA}\,{\rm{dx}}\\
\nonumber&=\int_\os 2\iprod{\Pj^T\Pj(R^T(\DR m|R_3)-\id_3),A(0|0|e_3)-AR^T(\DR m|R_3)}-\iprod{2\Delta R,RA}\,{\rm{dx}}\\
&=\int_\os 2\iprod{\Pj^T\Pj(R^T(\DR m|R_3)-\id_3),-A\big(R^T(\DR m|R_3)-(0|0|e_3)\big)}-\iprod{2R^T\Delta R,A}\,{\rm{dx}}\,,\qquad \forall A\in C_0^\infty(\os,\so(3))\\
\nonumber&=-\int_\os 2\iprod{\Pj^T\Pj(R^T(\DR m|R_3)-\id_3),AR^T(\DR m|0)}+\iprod{2R^T\Delta R,A}\,{\rm{dx}}\\
\nonumber&=-\int_\os \iprod{2\,\Pj^T\Pj(R^T(\DR m|R_3)-\id_3)(\DR m|0)^TR,A}+\iprod{2R^T\Delta R,A}\,{\rm{dx}}=0\,.
\end{align}
This implies the stationary condition in strong from
\begin{align}\label{skewdelta}
\nonumber\skew(\zwei R^T\Delta R)&=-\skew (\zwei \Pj^T\Pj(R^T(\DR m|R_3)-\id_3)\cdot(\DR m|0)^TR)=-\skew (\zwei \Pj^T\Pj(R^T(\DR m|0)-(\id_2|0))\cdot (\DR m|0)^TR)\\
&=-\skew (T(\DR m|R)\cdot (\DR m|0)^TR)\,,
\end{align} 
where $T$ is defined in (\ref{nonsym Biot})$_1$.

For $\Pj^T\Pj=\mu\cdot \id$ the last equation simplifies to
\begin{align}
\nonumber\skew (\zwei R^T\Delta R)&=-\mu\skew\Big(\zwei R^T(\DR m|0)(\DR m|0)^TR-(\id_2|0)(\DR m|0)^T R\Big)=\zwei \mu\skew\big((\id_2|0)(\DR m|0)^TR\big)\\
&=\zwei \mu\skew((\DR m|0)^TR)=-\zwei \mu \skew(R^T(\DR m|0))\,.
\end{align}
We can also rewrite (\ref{longapp})$_5$ as
\begin{align}
\nonumber 0&=\int_{\os} 2\iprod{\Pj^T\Pj(R^T(\DR m|R_3)-\id_3)(\DR m|0)^T,AR^T}+2\iprod{\Delta R,RA}\;{\rm{dx}}\\
&=\int_{\os} 2\dyniprod{\Big(\Pj^T\Pj(R^T(\DR m|R_3)-\id_3)(\DR m|0)^T\Big)^T,RA^T}+2\iprod{\Delta R,RA}\;{\rm{dx}}\\
\nonumber&=\int_{\os} 2\dyniprod{(\DR m|0)\Big(\Pj^T\Pj(R^T(\DR m|R_3)-\id_3)^T,RA}+2\iprod{\Delta R,RA}\;{\rm{dx}}\,,\qquad \forall A\in C_0^\infty(\os,\so(3))
\end{align}
being equivalent to
\begin{align}\label{perp}
\Delta R-(\DR m|0)\Big(\Pj^T\Pj(R^T(\DR m|R_3)-\id_3\Big)^T \perp {\rm{T}}_R\SO(3)\,.
\end{align}
Since
\begin{align}
\nonumber \Big(\Pj^T\Pj (R^T(\DR m|R_3)-\id_3)\Big)^T&=\Big(\Pj^T\Pj(R^T(\DR m|0)-(\id_2|0))\Big)^T=\Pj^T\Pj ((R^T(\DR m|0)(\id_2|0))^T)\,,
\end{align}
we may express (\ref{perp}) also as
\begin{align}\label{angular}
\Delta R-(\DR m|0)\Pj^T\Pj ((\DR m|0)^TR-(\id_2|0))\perp {\rm{T}}_R\SO(3)\,.
\end{align}
This is the form for balance of angular momentum given in equation (\ref{ELa2}).
\subsection{Lifting to the $\Delta$-operator}
This last equation (\ref{angular}) is not, however, the final form of the balance of angular momentum equation that we will consider. Indeed, since $R^TR=\id_3$, we can differentiate once to obtain 
\begin{align}
(\partial_xR)^TR+R^T\partial_x R=0\,,\quad\quad(\partial_yR)^TR+R^T\partial_y R=0\,.
\end{align}
Taking second partial derivatives, we get 
\begin{align}
(\partial^2_x R)^TR+(\partial_xR)^T\partial_xR+(\partial_x R)^T\partial_xR+R^T\partial_x^2R&=0\,,\\
\nonumber (\partial^2_y R)^TR+(\partial_yR)^T\partial_yR+(\partial_y R)^T\partial_yR+R^T\partial_y^2R&=0\,.
\end{align}
Summing up, shows
\begin{align}\label{symdel}
\nonumber (\Delta R)^TR+R^T\Delta R+2\big((\partial_x R)^T\partial_x R+(\partial_y R)^T\partial_y R\big)&=0\,,\\
\Longleftrightarrow \quad 2\sym(R^T\Delta R)+2[(\partial_x R)^T\partial_xR+(\partial_y R)^T\partial_R]&=0\,,\\
\nonumber \sym (R^T\Delta R)=-[(\partial_xR)^T\partial_xR+(\partial_yR)^T\partial_yR]&\,.
\end{align}
From (\ref{skewdelta}) we have
\begin{align}\label{skewdel}
\skew(\zwei R^T\Delta R)=-\skew (T(\DR m,R)(\DR m|0)^TR)\,.
\end{align}
Adding (\ref{symdel})$_2$ and (\ref{skewdel}) yields, due to the orthogonality of $\sym$ and $\skew$
\begin{align}
\zwei R^T\Delta R=\sym (\zwei R^T\Delta R)+\skew (\zwei R^T\Delta R)=-\zwei [(\partial_xR)^T\partial_xR+(\partial_y R)^T\partial_yR]-\skew (T(\DR m,R)(\DR m|0)^TR)\,.
\end{align}
Hence using the isotropy of skew we obtain
\begin{align}\label{delta}
\nonumber\zwei \Delta R&=-\zwei R\,[(\partial_xR)^T\partial_xR+(\partial_y R)^T\partial_yR]-R\skew \big(T(\DR m,R)(\DR m|0)^TR\big)\\
\nonumber&=-\zwei R\,[(\partial_xR)^T\partial_xR+(\partial_y R)^T\partial_yR]-R\skew \big(R^TR\,T(\D m,R)(\D m|0)^TR\big)\\
\nonumber&=-\zwei R\,[(\partial_xR)^T\partial_xR+(\partial_y R)^T\partial_yR]-R\,R^T\skew\big(R\,T(\D m,R)(\D m|0)^T\big)\cdot R\\
&=-\zwei R\,[(\partial_xR)^T\partial_xR+(\partial_y R)^T\partial_yR]+\skew \big((\D m|0)(R\,T(\D m,R))^T\big)\cdot R\\
\nonumber&=-\zwei R\,[(\partial_xR)^T\partial_xR+(\partial_y R)^T\partial_yR]+\skew \big((\D m|0)[(\pj (R\,T(\D m ,R)))]^T\big)\cdot R\\
\nonumber&=-\zwei R\,[(\partial_xR)^T\partial_xR+(\partial_y R)^T\partial_yR]+\skew \big(\D m\cdot S(\D m,R)^T\big)\cdot R\,,
\end{align}
giving
\begin{align}
\nonumber\Delta R&=-R\,[(\partial_xR)^T\partial_xR+(\partial_y R)^T\partial_yR]+\frac{1}{2}\skew \big(\D m\cdot S(\D m,R)^T\big)\cdot R\\
&=-R\,[(\partial_xR)^T\partial_xR+(\partial_y R)^T\partial_yR]+\skew (\D m\dott S(\D m,R))\cdot R\,.
\end{align}
where we used the definition of $\dott$ given in equation (\ref{dott}).

We set 
\begin{align}
-R\,[(\partial_xR)^T\partial_xR+(\partial_y R)^T\partial_yR]=-R\,\partial_x R^T\cdot\partial_x R-R\,\partial_y R^T\,\partial_y R=:\Omega_R\cdot \DR R\,.
\end{align}
\begin{align}
(\Omega_R)_1&:=-R\,\partial_x R^T\in \so(3)\,,\qquad (\Omega_R)_2:=-R\,\partial_y R^T\in \so(3)\,.
\end{align}
With this definition, (\ref{delta}) can be written as
\begin{align}\label{Del}
\Delta R=\underbrace{\Omega_R\cdot \DR R}_{\in L^1(\os)}-\underbrace{R\skew\big(T(\DR m,R)(\DR m|0)^TR\big)}_{\in L^1(\os)}\,.
\end{align}
Considering the special case $\Pj^T\Pj=\mu\cdot\id$, equation (\ref{Del}) turns into
\begin{align}
\Delta R&=\Omega_R\cdot \DR R+\mu R\skew((\DR m|0)^TR)=\Omega_R\cdot\DR R-\underbrace{\mu R\skew(R^T(\DR m|0))}_{\in L^2(\os)}\,.
\end{align}

We observe finally, that
\begin{align}
R^T(\Omega_R)_iR=-(\partial_i R)^T R=R^T\partial_iR\,,\qquad\qquad i=1,2\,,
\end{align}
which implies for $\Gamma_i=\axl (R^T\partial_i R)$
\begin{align}
R^T(\Omega_R)_i R=\Anti (\Gamma_i)\,,\qquad\qquad \axl(R^T(\Omega_R)_i R)=\Gamma_i\,,
\end{align}
where $\Gamma$ is the wryness tensor from eq. (\ref{wryness}).
\subsection{A glimpse on a Reissner-Mindlin type flat membrane shell model}
It is interesting to compare our Cosserat flat membrane shell model (allowing for existence of minimizers and their full regularity) with one that would appear closer to classical approaches. For this sake we consider a Reissner-Mindlin flat membrane shell model next.

In case of the one-director geometrically nonlinear, physically linear Reissner-Mindlin flat membrane shell model without independent drilling rotations, the problem can be described as a two-field minimization for the midsurface $m\col \os \subset \R^2\to \R^3$ and the unit-director field $d\col\os\subset \R^2\to \mathbb{S}^2$ of the elastic energy\footnote{The missing Reissner-Mindlin bending contribution scaling with $h^3$ would be of the form \cite[(7.25)]{neff2004geometrically}
\begin{align}\label{Delta}
\frac{h^3}{12}\Big\{\mu|\sym \Big((\DR m|d)^T(\DR d|0)\Big)|^2+\frac{\mu\lambda}{2\mu+\lambda}\tr\Big(\sym\Big((\DR m|d)^T(\DR d|0)\Big)\Big)^2 \Big\}\,.
\end{align}
Here, no choice of constitutive parameters reduces the bending energy to the uni-constant case. }
\begin{align}
E_{\text{Reissner}}^{\text{2D}}(m,d)=\int_{\os}h\underbrace{\Big\{\frac{\mu}{4}\underbrace{|\DR m^T\DR m-\id_2|^2}_{\text{in-plane stretch}}+\frac{1}{8}\frac{2\mu\lambda}{2\mu+\lambda}\underbrace{\tr\big(\DR m^T\DR m-\id_2\big)^2}_{\text{elongational stretch}}}_{\text{non-elliptic}}+\mu\big(\underbrace{\iprod{d\.,\partial_x m}^2+\iprod{d\.,\partial_ym}^2}_{\text{transverse shear}}\big)+\underbrace{\mu\frac{L_c^2}{2}|\nabla d|^2}_{\text{curvature}}\Big\}\;{\rm{dx}}\,.
\end{align}
Here, the membrane energy part is not rank-one elliptic due to the presence of the membrane strain $\DR m^T\DR m-\id_2$. The uni-constant curvature energy could be generalized to the Oseen-Frank form, cf. subsection \ref{Oseen-Frank}. We note that
$\iprod{d\,,\partial_xm}^2+\iprod{d\,,\partial_ym}^2=|\DR m^T d|^2_{\R^2}$, and look for simplicity at the energy
\begin{align}
\int_{\os}\frac{1}{2}|\DR m^T\DR m-\id_2|^2_{\R^{2\times 2}}+\frac{1}{2}|\DR m^Td|^2_{\R^2}+\frac{1}{2}|\nabla d|^2=\int_{\os}\frac{1}{2}|(\DR m|n_m)^T(\DR m|n_m)-\id_3|^2+\frac{1}{2}|\DR m^T d|^2+\frac{1}{2}|\nabla d|^2\;{\rm{dx}}\,.
 \end{align}
The Euler-Lagrange equations are then given by
\begin{align}
\nonumber\delta E^{\text{2D}}_{\text{Reissner}}(m,d)\cdot \delta m&=\int_{\os}2\iprod{(\DR m|n_m)^T(\DR m|n_m)-\id_3,(\DR m|n_m)^T(\DR \delta m|0)}_{\R^{3\times 3}}+\iprod{\DR m^T d,(\DR \delta m)^Td}_{\R^2}\;{\rm{dx}}\\
\nonumber&=\int_{\os}\iprod{(\DR m|n_m)\cdot 2((\DR m|n_m)^T(\DR m|n_m)-\id_3),(\DR \delta m|0)}_{\R^{3\times 3}}\;{\rm{dx}}+\iprod{\DR m^T d\otimes \D\delta m^Td,\id_2}_{\R^{2\times 2}}{\rm{dx}}\\
\nonumber&=\int_{\os}\iprod{(\DR m|n_m)\cdot 2((\DR m|n_m)^T(\DR m|n_m)-\id_3),(\DR \delta m|0)}_{\R^{3\times 3}}\;{\rm{dx}}+\iprod{(\DR m^T d\otimes d) \D\delta m,\id_2}_{\R^{2\times 2}}{\rm{dx}}\\
&=\int_{\os}\iprod{\pj((\DR m|n_m)\cdot 2((\DR m|n_m)^T(\DR m|n_m)-\id_3))+d\otimes \DR m^Td,\DR \delta m}_{\R^{3\times 2}}\;{\rm{dx}}\\
\nonumber&=-\int_{\os}\iprod{\Div \Big[\pj((\DR m|n_m)\cdot 2((\DR m|n_m)^T(\DR m|n_m)-\id_3))+d\otimes \DR m^Td\Big],\delta m}_{\R^3}\;{\rm{dx}}=0\quad\forall \delta m\in C_0^\infty(\os,\R^3)\,.
\end{align}
Here, we can define the first Piola-Kirchhoff type stress tensor
\begin{align}
S(\DR m,d)=\pj((\DR m|n_m)\cdot 2((\DR m|n_m)^T(\DR m|n_m)-\id_3)+d\otimes \DR m^Td\,.
\end{align}
For variations w.r.t $d\in \mathbb{S}^2$ we note that 
\begin{align}
|d+\delta d|^2=1\quad\Longleftrightarrow\quad |d|^2+2\iprod{d,\delta d}+|\delta d|^2=1\,.
\end{align}
Hence, the variation $\delta d$ is orthogonal to $d$, i.e. $\iprod{d,\delta d}=0$. Without loss of generality, we express $\delta d$ as $\delta d=d\times \delta v$ for some $\delta v\in C_0^\infty (\os,\R^3)$. Therefore, taking variations w.r.t. $d$ gives
\begin{align}
\nonumber \delta E_{\text{Reissner}}^{\text{2D}}(m,d)\cdot \delta d=\int_\os\iprod{\nabla d,\nabla \delta d}+\iprod{\DR m^Td,\DR m^T\delta d}\,{\rm{dx}}=0 &\Longleftrightarrow -\int_\os\iprod{\Delta d,d\times \delta v}+\iprod{\DR m\,\DR m^T d,d\times \delta v}\,{\rm{dx}}=0\\
\nonumber\Longleftrightarrow \int_\os-\iprod{\Delta d,\Anti (d)\delta v}+\iprod{\DR m\,\DR m^T d,\Anti(d) \delta v}\,{\rm{dx}}=0 &\Longleftrightarrow \int_\os\iprod{\Anti(d) \Delta d,\delta v}_{\R^3}-\iprod{\Anti(d)\DR m\,\DR m^T d,\delta v}\,{\rm{dx}}=0\,,\\
&\hspace{4.5cm}\forall \delta v\in C_0^\infty (\os,\R^3)\,.
\end{align}
The latter leads to the strong form
\begin{align}\label{anti}
\Anti (d)\Big(\Delta d-\DR m\,\DR m^T d\Big)=0\Longleftrightarrow d\times(\Delta d- \DR m\,\DR m^T d)=0\,.
\end{align}
However, since $d\in \mathbb{S}^2$, we know $|d|^2=1$. Therefore, in addition, taking partial derivatives, we obtain
\begin{align}
\iprod{\partial_x d,d}=0\,,\qquad\qquad \iprod{\partial_y d,d}=0\,.
\end{align}
Taking second partial derivatives yields
\begin{align}
\iprod{\partial^2_xd,d}+\iprod{\partial_xd,\partial_y d}=0\,,\qquad\iprod{\partial^2_yd,d}+\iprod{\partial_yd,\partial_yd}=0\,.
\end{align}
Summing up shows
\begin{align}\label{Deltad}
\iprod{\Delta d,d}+|\partial_x d|^2+|\partial_yd|^2=\iprod{\Delta d,d}+|\D d|^2=0\,.
\end{align}
Adding (\ref{anti}) and (\ref{Deltad}) shows
\begin{align}
d*\Delta d:=\hspace{-0.5cm}\underbrace{d\times \Delta d+\iprod{d,\Delta d}}_{\text{geometric product, Clifford product}}\hspace{-0.6cm}=\quad\underbrace{d\times (\DR m\,\DR m^Td)}_{\in\R^3}-\underbrace{|\DR d|^2}_{\in \R}\,.
\end{align}
Formally, this implies 
\begin{align}
\Delta d=\frac{d}{|d|^2}*\Big[d\times (\DR m\,\DR m^Td)-|\DR d|^2\Big]\,.
\end{align}
In terms of equations, we have altogether from (\ref{anti})$_1$ and (\ref{Deltad}), respectively
\begin{align}
\Anti(d)\,\Delta d=\DR m\,\DR m^T d\,,\qquad\qquad \iprod{d,\Delta d}=-|\DR d|^2\,,
\end{align}
equivalently
\begin{align}\label{matrix}
\underbrace{\matr{&\Anti(d)&\\d_1&\hspace{-0.2cm}d_2&\hspace{-0.2cm}d_3}}_{=:\widehat{A}\in \R^{4\times 3}}\matr{\Delta d\\|}=\matr{\DR m\,\DR m^T d\\-|\DR d|^2}_{\R^4}\,.
\end{align}
We multiply (\ref{matrix}) with $\widehat{A}^T$ to get
\begin{align}
\widehat{A}^T\widehat{A}\,\Delta d=\widehat{A}^T\matr{\DR m\,\DR m^T d\\-|\DR d|^2}\in \R^3\,,\qquad |d|=1\,.
\end{align}
Since in fact (sic)
$$\widehat{A}^T\widehat{A}=|d|^2\cdot \id_3=\id_3\,,$$ we obtain the system of Euler-Lagrange equations
\begin{align}\label{balan-forc}
\Div S(\DR m,d)&=0\,,\qquad\qquad\text{"balance of forces"} 
\end{align}
with a first Piola-Kirchhoff type stress tensor
\begin{align}
 S(\DR m ,d)=\pj((\DR m|n_m)\cdot 2((\DR m|n_m)^T(\DR m|n_m)-\id_3)+d\otimes \DR m^Td\,,
\end{align}
and
\begin{align}\label{Deltaanti}
 \nonumber\Delta d&=\Big(-\Anti(d)|d\Big)_{\R^{3\times 4}}\matr{\DR m\,\DR m^T d\\-|\DR d|^2}\\
 \nonumber&=-\Anti(d)\,(\DR m\,\DR m^Td)-|\DR d|^2\cdot d\,,\hspace{1cm}\text{"balance of director equilibirium"}\\
 &=-\underbrace{d\times (\DR m\,\DR m^T d)}_{\in L^2(\os,\R^3)}-|\DR d|^2\cdot d\,.
\end{align}
We observe that (\ref{balan-forc}) constitutes a nonlinear, nonconvex problem for the midsurface $m$ once the unit director $d$ is determined. Therefore, existence to (\ref{balan-forc},\ref{Deltaanti}) is not yet known and likely not true. We note that the right-hand side in (\ref{Deltaanti}) contains an $L^2(\os)$-term, since $\D m\in L^4(\omega)$ instead of our $L^1(\os)$-term in equation (\ref{Del}). For $\DR m\equiv0$ we recover from (\ref{Deltaanti}) the director equilibrium for the uni-constant liquid crystal problem equation (\ref{euler-lag}).
\subsection{Numerical experiments}
We present a sequence of numerical experiments for the problem~\eqref{E2D}. In these experiments we compare the dimensionally reduced energy~\eqref{compa ener} (where the transverse shear energy is multiplied by the arithmetic mean of $\mu$ and $\mu_c$) to the energy~\eqref{curv} of the rigorously derived $\Gamma$-limit membrane model (where the transverse shear energy is multiplied by the harmonic mean of $\mu$ and $\mu_c$). We set the Lamé parameters to $\mu = 2.7191 \cdot 10^{4}$, $\lambda = 4.4364\cdot 10^{4}$, and vary $\mu_c$ and $L_c$.

For the domain we choose the unit disk, which we discretized by $6 \cdot 4^6 = 24\:576$ triangular elements.
We used Lagrange finite elements of second order for the midsurface deformation~$m$ and geodesic finite elements of second order for the microrotation field~$R$ \cite{sander2016geodesic,sander2016numerical}.

To trigger the deformation process, we radially compressed the membrane to a new radius $r<1$ by Dirichlet boundary conditions for the deformation on the entire domain boundary.
The microrotation field was not subject to Dirichlet boundary conditions at all.
We minimized the discrete energy using a trust-region method \cite{sander2016numerical} starting from the cap function $m_0(x,y) = (x, y, 0.1 - 0.1 \sqrt{x^2 + y^2})$ for all $(x,y)$ in the interior of the unit disk and $m_0(x,y) = (r x, ry, 0)$ on the boundary. The initial microrotation was $R = \id$. We conducted several simulations
resulting in different wrinkle patterns depending on the Cosserat couple modulus $\mu_c$ and the characteristic length $L_c$ as shown in Figures \ref{numeric figure A}, \ref{numeric figure B} and \ref{numeric figure C}. The numerical algorithms were implemented in C++ using the DUNE libraries (www.dune-project.org) \cite{sander2020dune}.

From the figures one can see that wrinkling only happens if the characteristic length $L_c$ is small enough. Indeed, if $L_c = 10^{-3}$
then the deformation is largely bending-dominated, with small wrinkles only appearing next to the boundary,
if $\mu_c$ is large enough. With smaller values for $L_c$ one can see wrinkling in larger parts of the domain,
even if the radial compression factor $r$ is much smaller. Note that the choice $\mu_c = 0$ does not lead to a
well-posed problem when used in the energy~\eqref{curv}, because there it makes the transverse shear energy term disappear.

\begin{figure}
	\centering
	\begin{tabular}{ c ||c | c }
		&  	\textbf{Arithmetic Mean~\eqref{compa ener}} & 	\textbf{Harmonic Mean~\eqref{curv}}\\
		\hline
		\shortstack{$\mu_c = 0$\vspace{2.8em}}& \includegraphics[width=0.24\textwidth]{./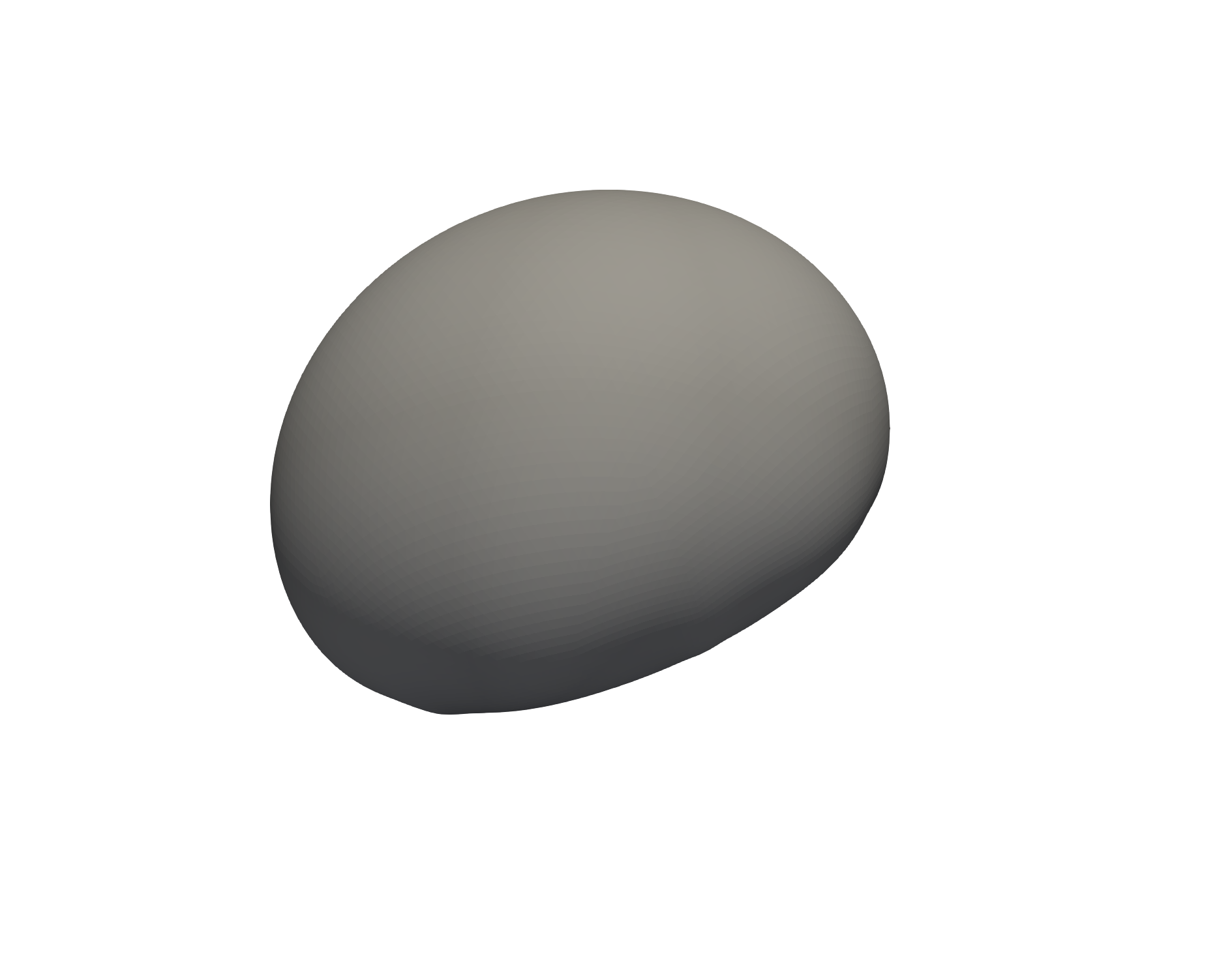} {\tiny$r = 0.9$}& {\tiny not well-posed}\\
		\hline
		\shortstack{$\mu_c = 10^{-5}\mu$\vspace{2.4em}}
		& \includegraphics[width=0.24\textwidth]{./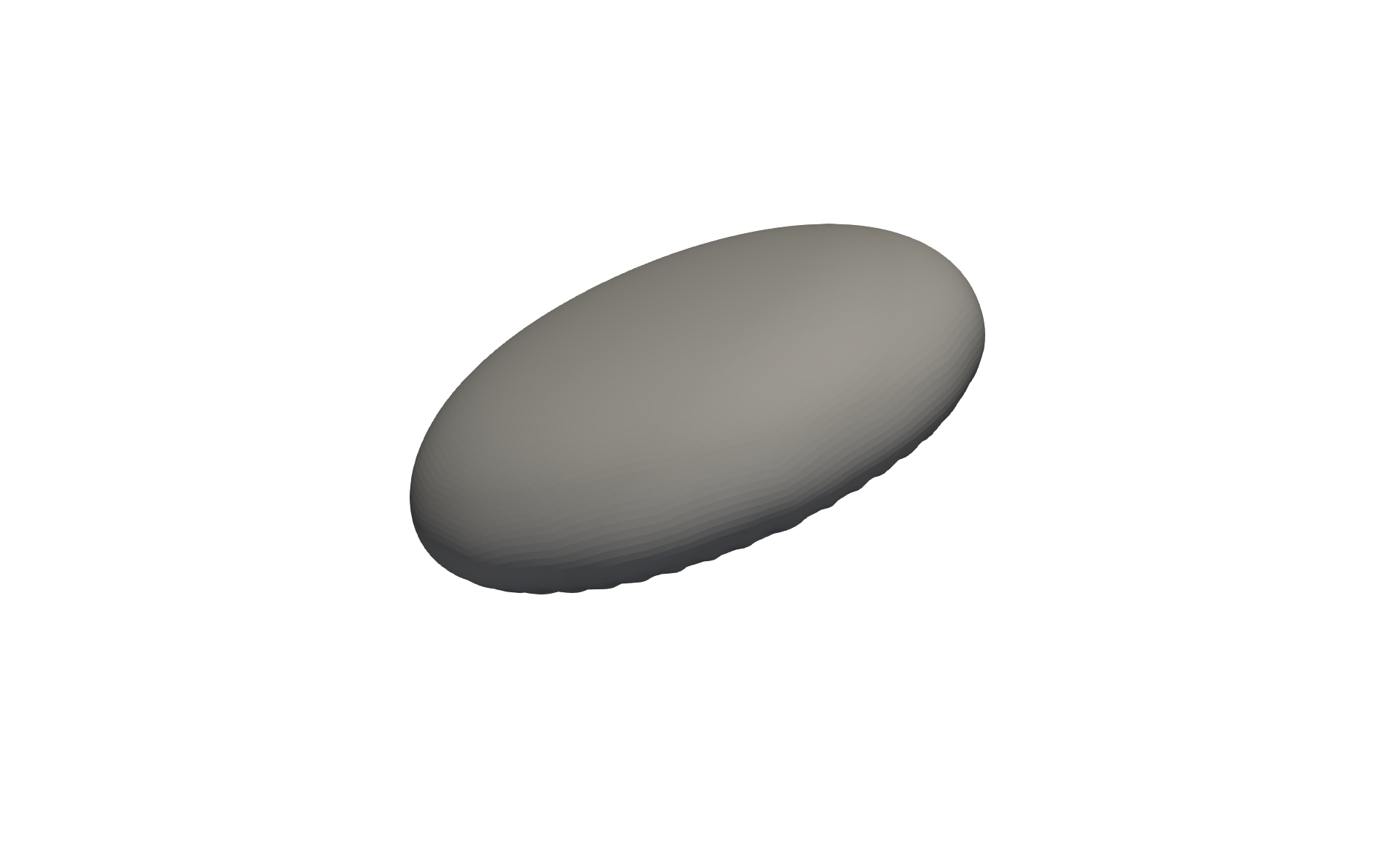} {\tiny$r = 0.9$} &  \includegraphics[width=0.24\textwidth]{./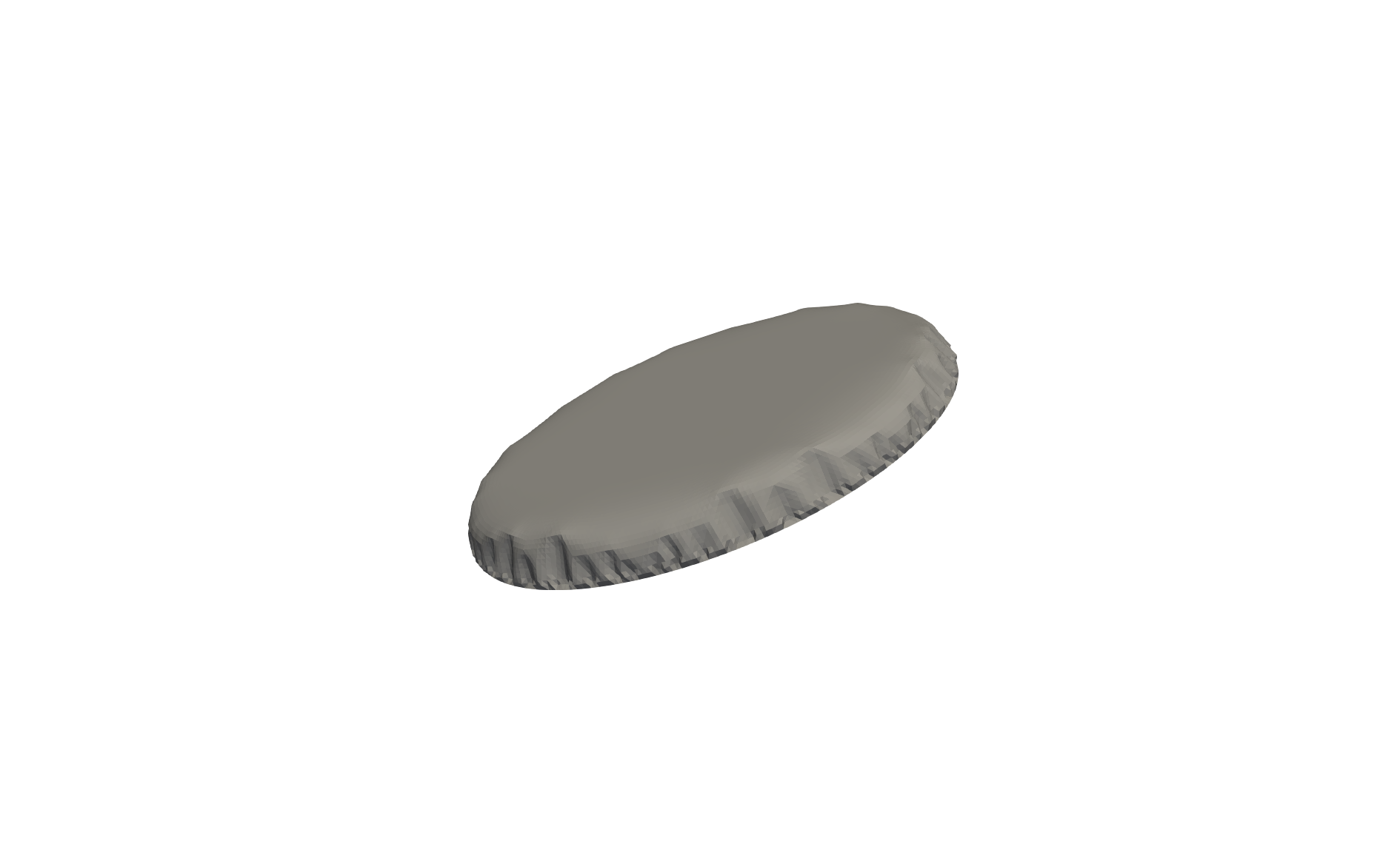} {\tiny$r = 0.9$}\\
		\hline 
		\shortstack{$\mu_c = 10^{-2}\mu$ \vspace{2.4em}}
		& \includegraphics[width=0.24\textwidth]{./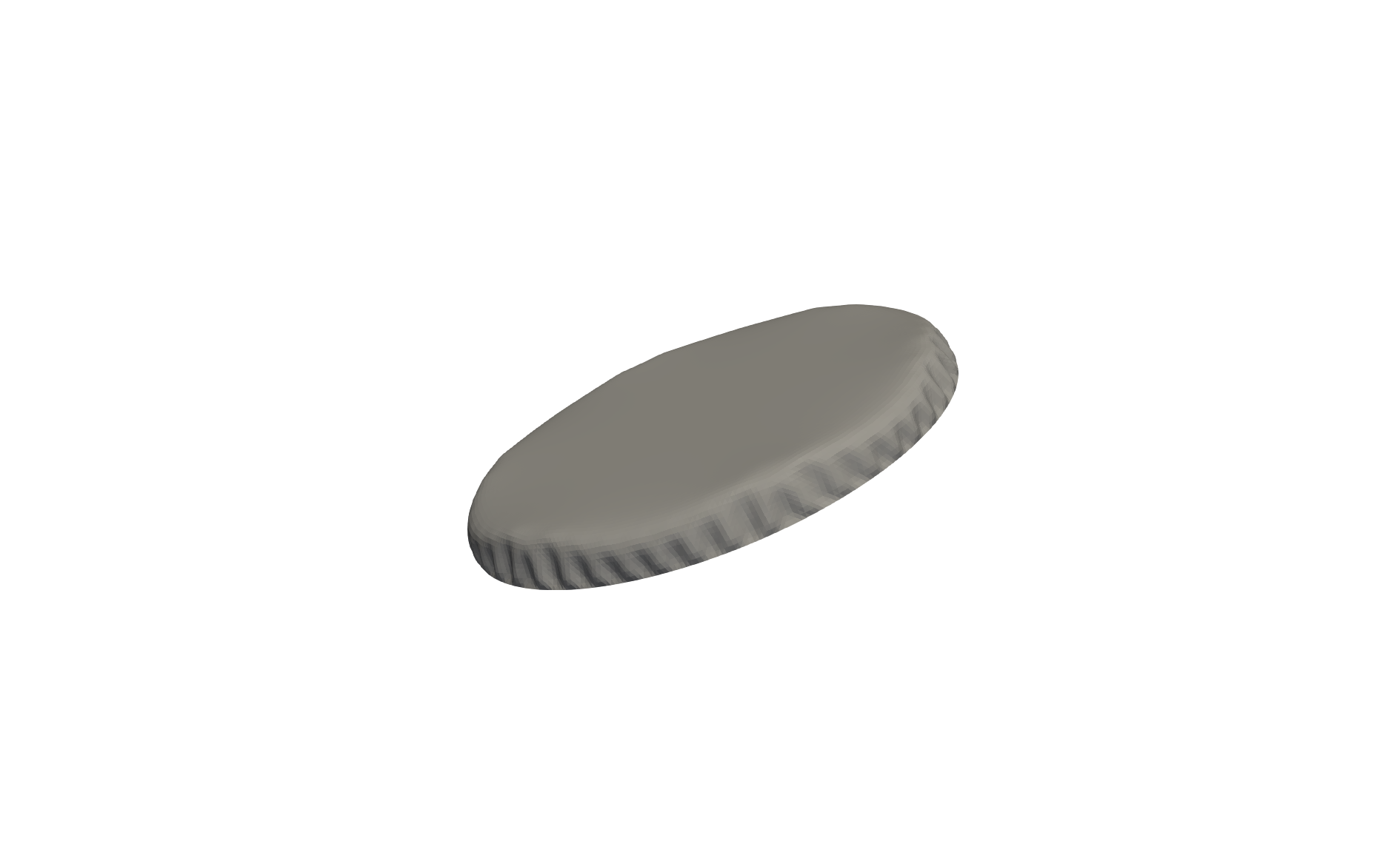}{\tiny$r = 0.9$} &  \includegraphics[width=0.24\textwidth]{./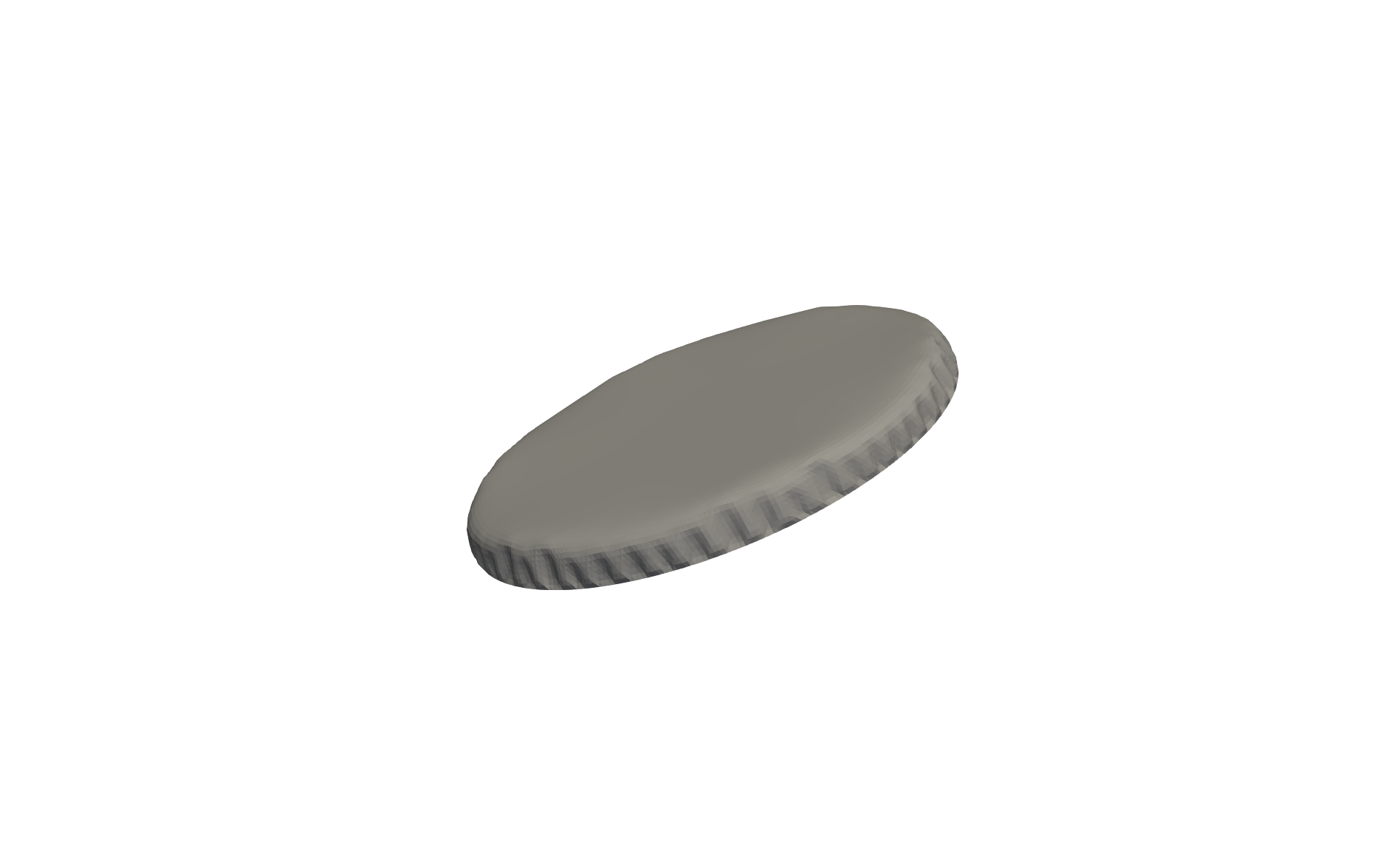} {\tiny$r = 0.9$}\\
		\hline 
		\shortstack{$\mu_c = \mu$\vspace{2.4em}}
		& \multicolumn{2}{c}{\includegraphics[width=0.24\textwidth]{./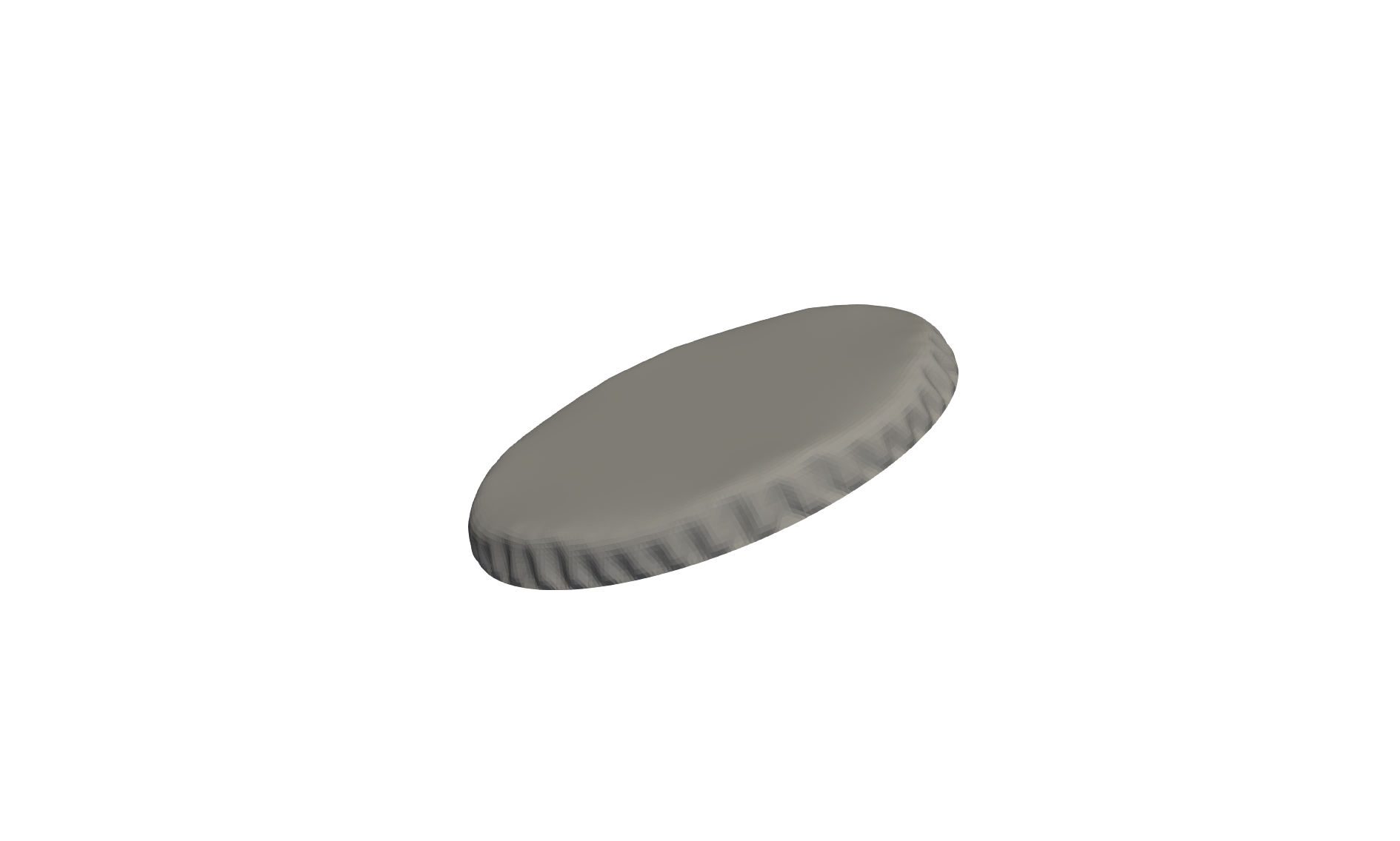}}{\tiny$r = 0.9$}\\
		\hline 
	\end{tabular}
	\caption{Deformation of a radially compressed shell with $L_c = 10^{-3}$. Simulations by Lisa Julia Nebel and Oliver Sander (TU Dresden).}\label{numeric figure A}
\end{figure}
\begin{figure}
	\centering
	\begin{tabular}{c || c | c}
		&  	\textbf{Arithmetic Mean~\eqref{compa ener}} & 	\textbf{Harmonic Mean~\eqref{curv}}	\\
		\hline 
		\shortstack{$\mu_c = 0$ \vspace{2.4em}}
		&\includegraphics[width=0.24\textwidth]{./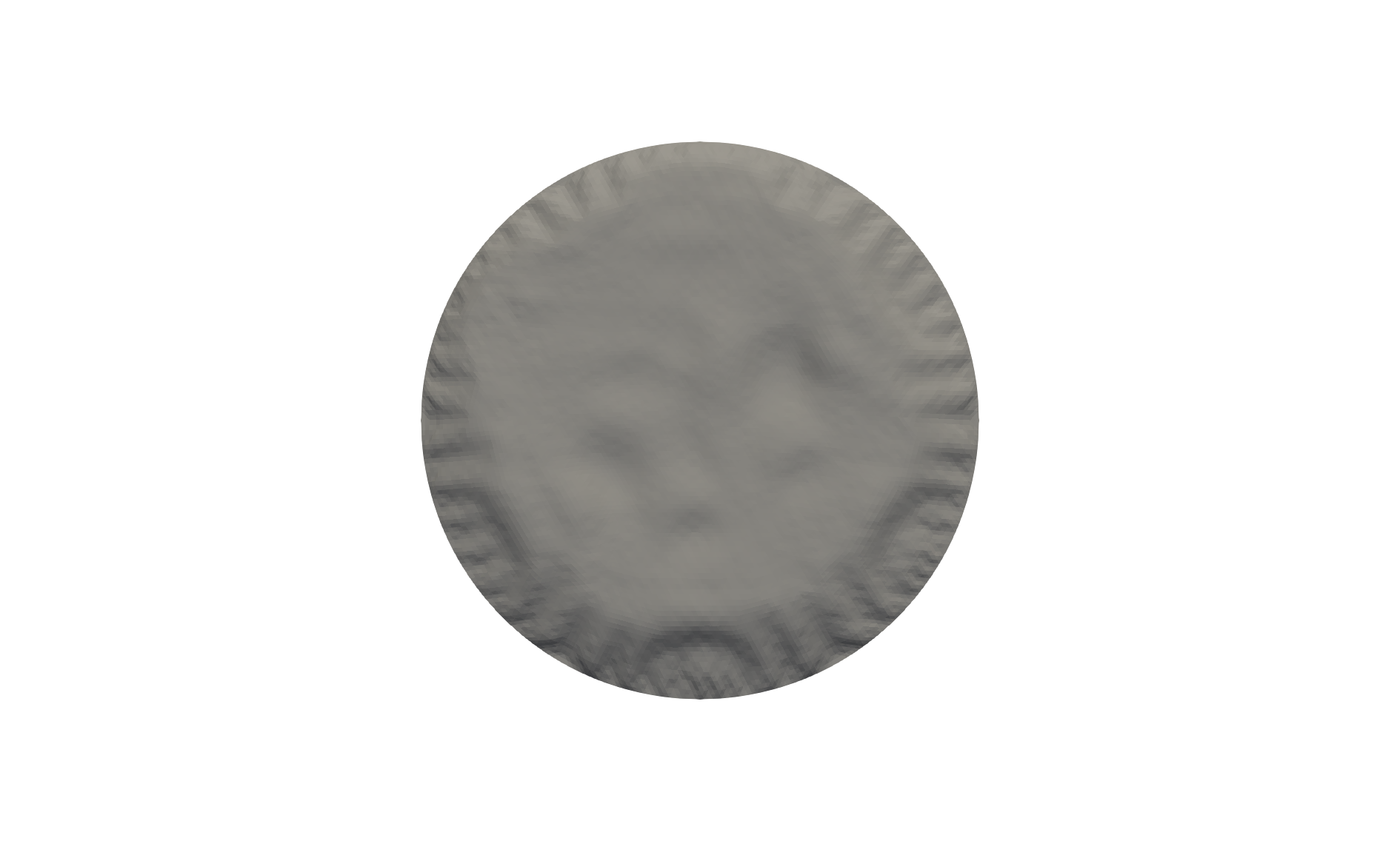}{\tiny$r = 0.98$} & {\tiny not well-posed}\\
		\hline 
		\shortstack{$\mu_c = 10^{-2}\mu$ \vspace{2.4em}}
		& \includegraphics[width=0.24\textwidth]{./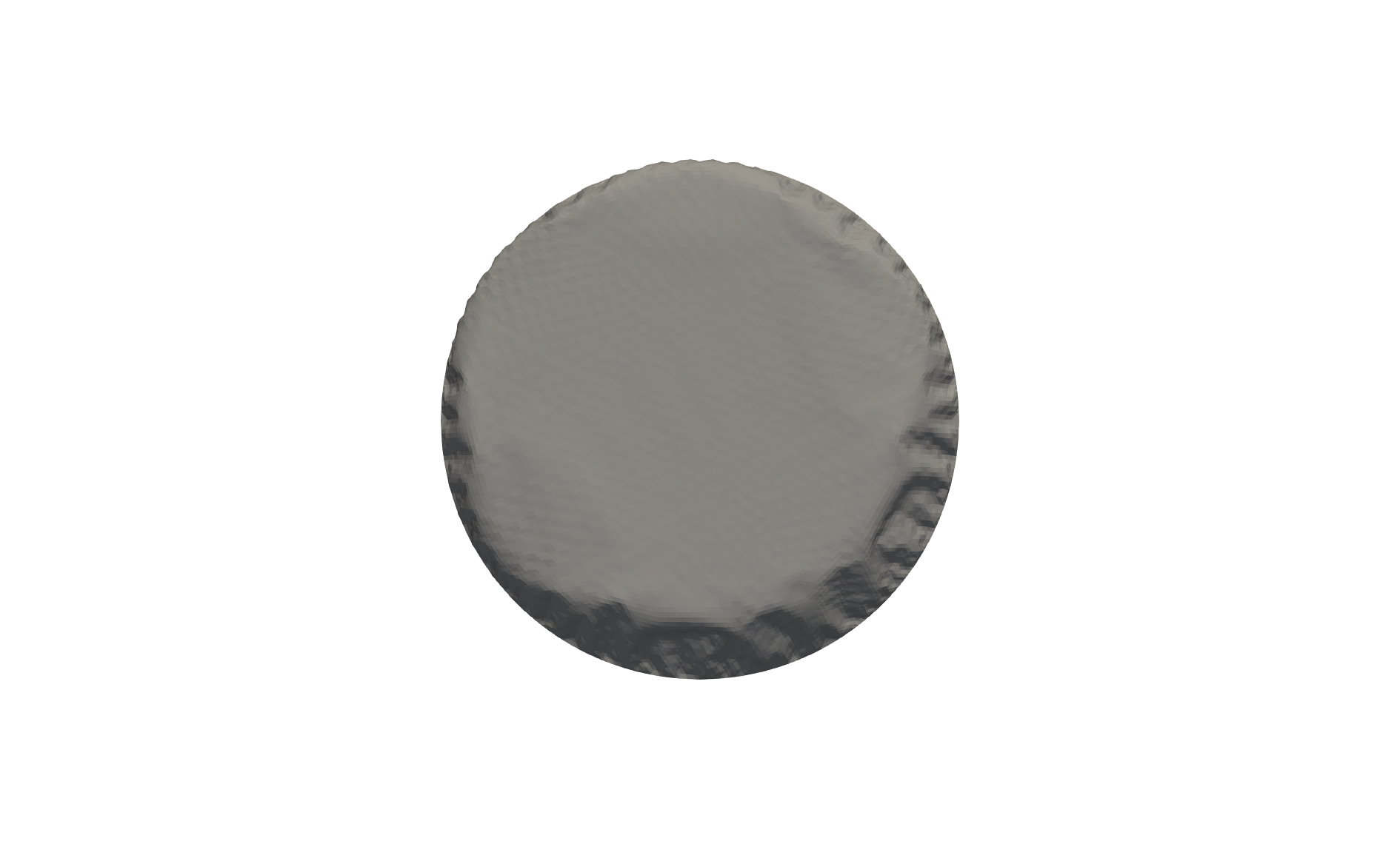} {\tiny$r = 0.91$} &  \includegraphics[width=0.24\textwidth]{./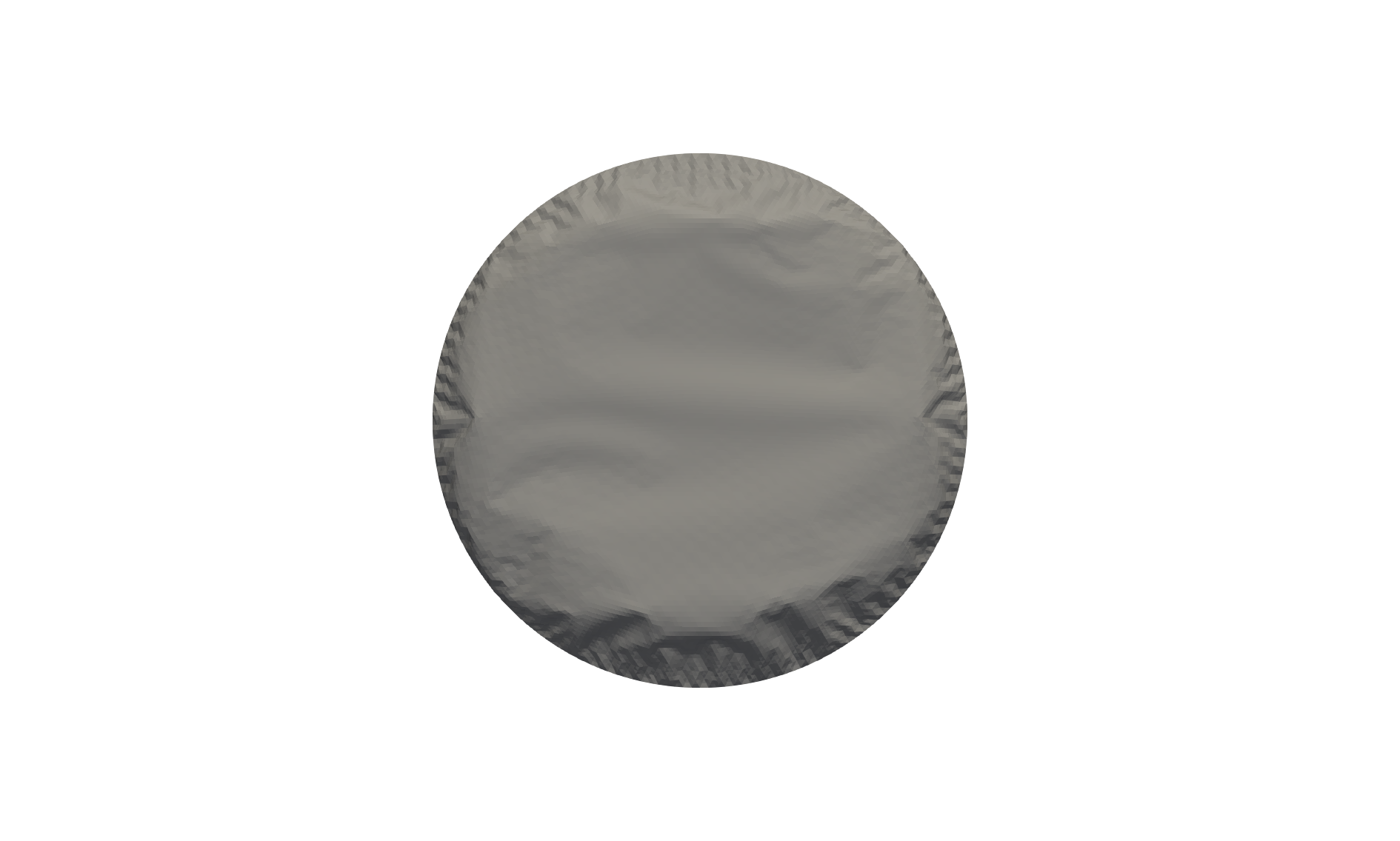}{\tiny$r = 0.94$} \\
		\hline 
		\shortstack{$\mu_c = \mu$\vspace{2.4em}}
		& \multicolumn{2}{c}{\includegraphics[width=0.25\textwidth]{./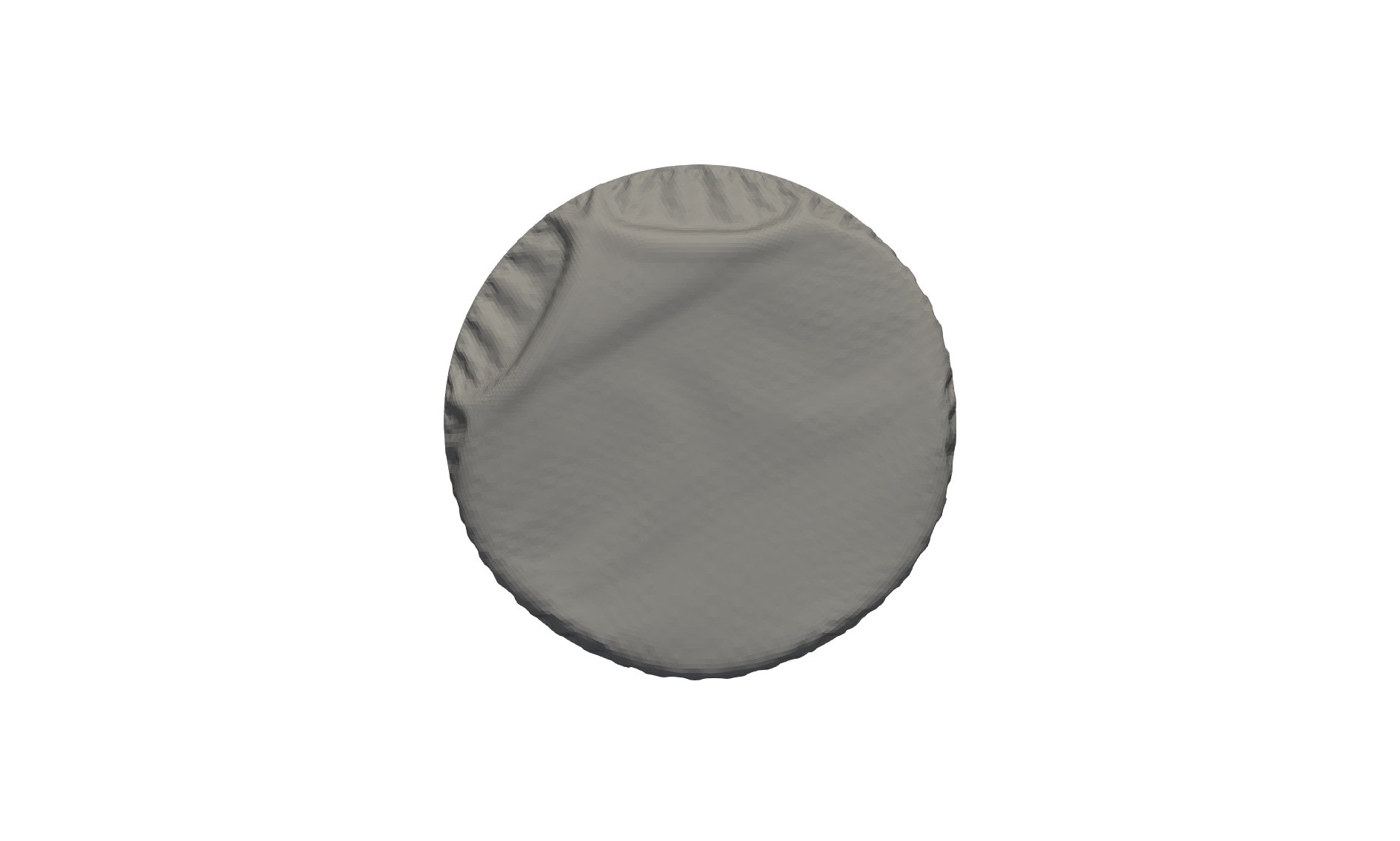}}{\tiny$r = 0.9$}\\
		\hline 
	\end{tabular}
	\caption{Deformation of a radially compressed shell with $L_c = 10^{-5}$. Simulations by Lisa Julia Nebel and Oliver Sander (TU Dresden).}\label{numeric figure B}
\end{figure}
\begin{figure}\centering
	\begin{tabular}{c || c | c}
		&  	\textbf{Arithmetic Mean~\eqref{compa ener}} & 	\textbf{Harmonic Mean~\eqref{curv}}	\\
		\hline 
		\shortstack{$\mu_c = 0$ \vspace{2.4em}} &\includegraphics[width=0.24\textwidth]{./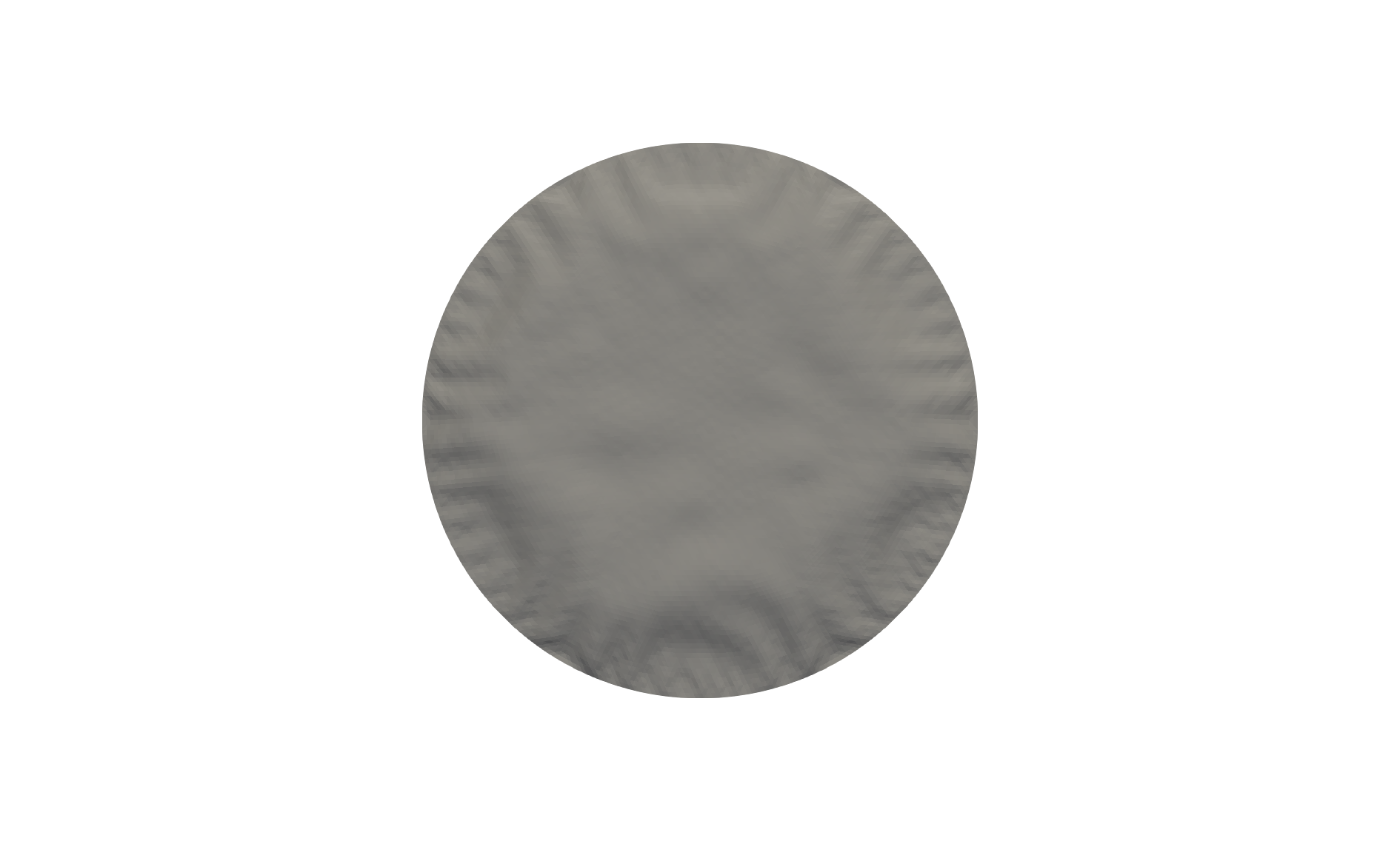}{\tiny$r = 0.99$} & {\tiny not well-posed}\\
		\hline 
		\shortstack{$\mu_c = 10^{-2}\mu$ \vspace{2.4em}}
		& \includegraphics[width=0.24\textwidth]{./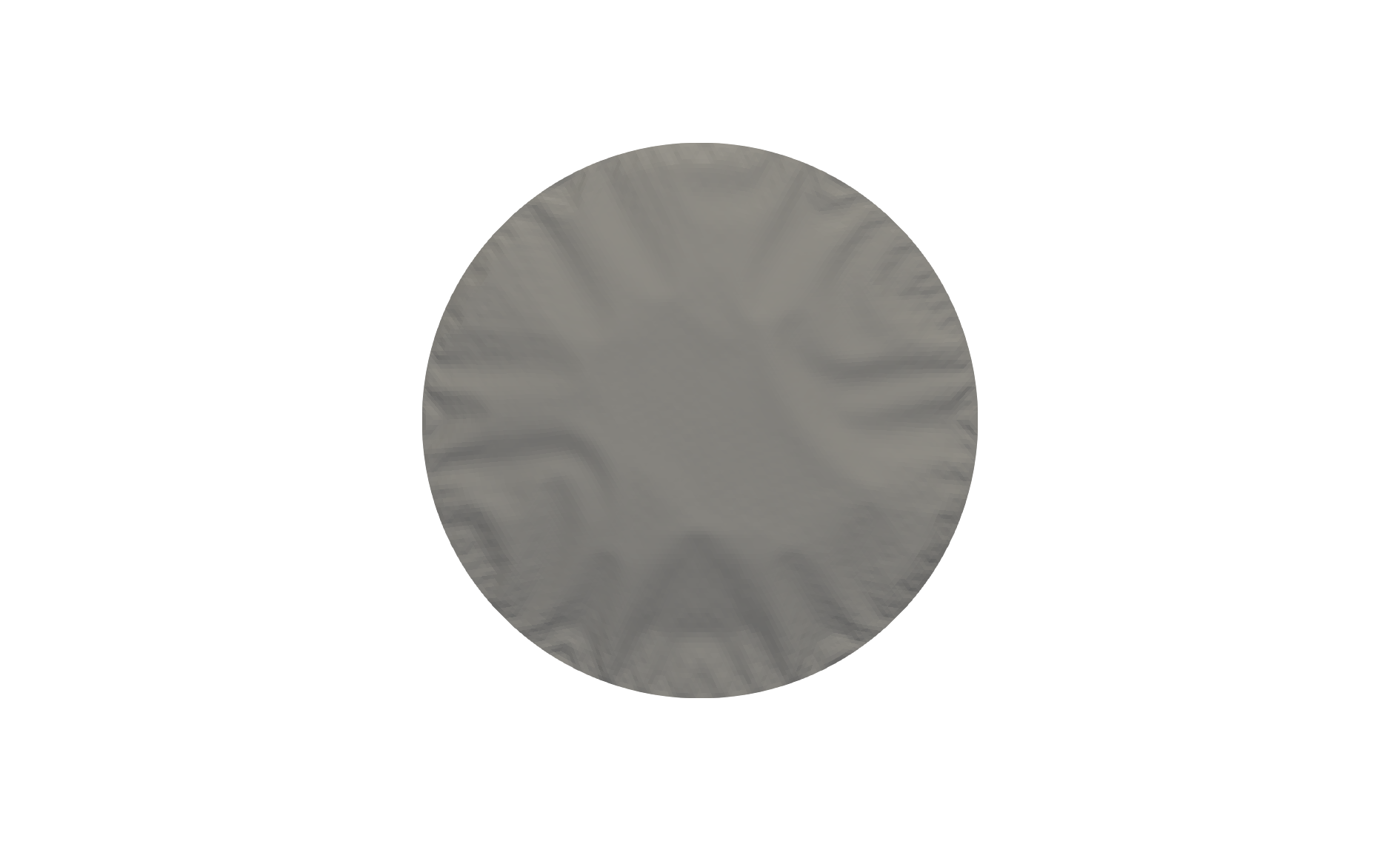}{\tiny$r = 0.99$} & \includegraphics[width=0.24\textwidth]{./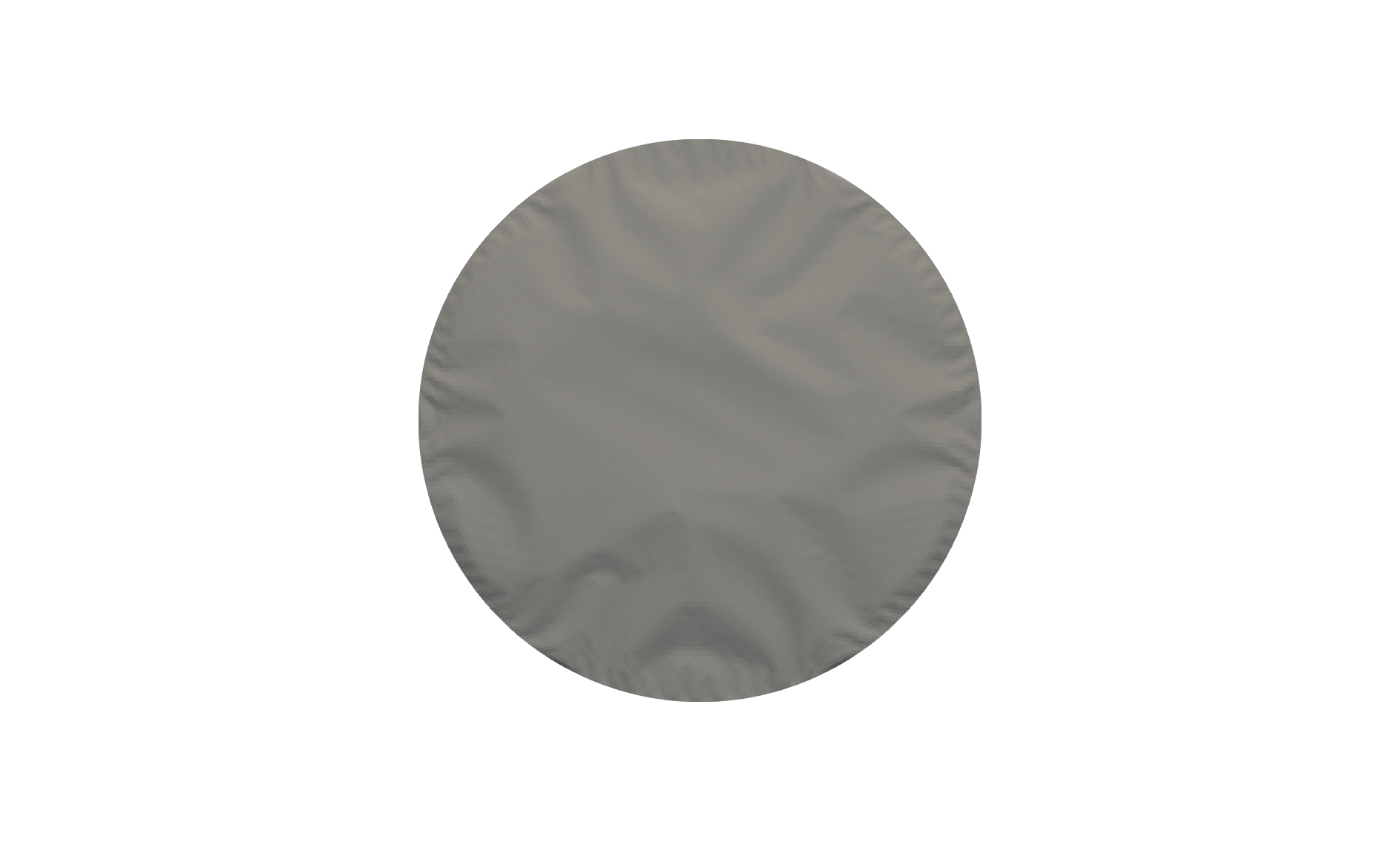}{\tiny$r = 0.99$}
		\\
		\hline 
		\shortstack{$\mu_c = \mu$\vspace{2.4em}}
		& \multicolumn{2}{c}{\includegraphics[width=0.24\textwidth]{./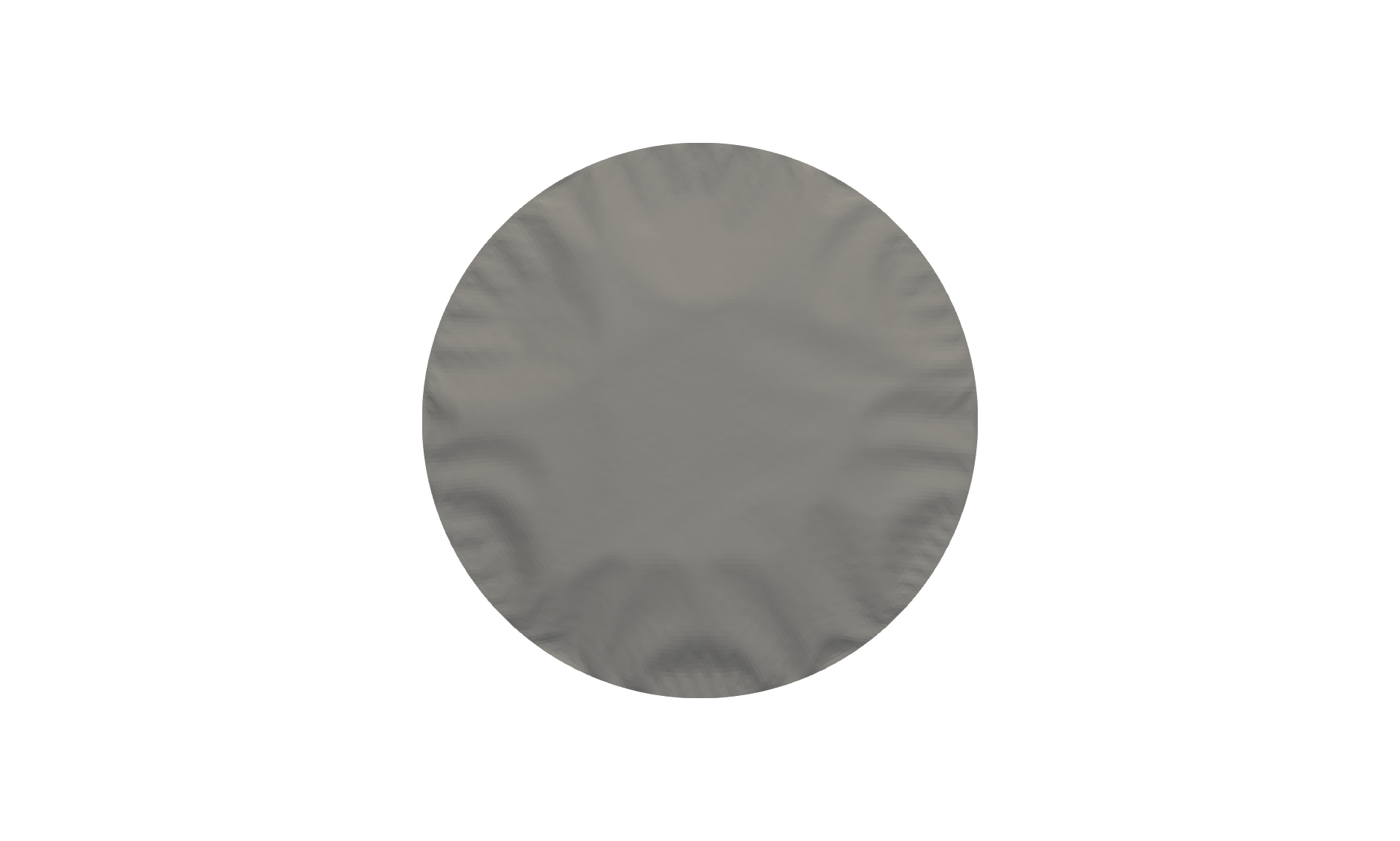}} {\tiny$r = 0.99$}\\
		\hline
	\end{tabular}
	\caption{Deformation of a radially compressed shell with $L_c = 10^{-8}$. Simulations by Lisa Julia Nebel and Oliver Sander (TU Dresden).}\label{numeric figure C}
\end{figure}

\end{appendix}
\end{document}